\newtheorem{theorem}{Theorem}[section]
\newtheorem{lemma}{Lemma}[section]
\newtheorem{remark}{Remark}[section]
\newcommand{\mR}{\mathbb{R}}
\newcommand{\mC}{\mathbb{C}}
\newcommand{\mN}{\mathbb{N}}
\newcommand{\mE}{\mathbb{E}}
\newcommand{\mZ}{\mathbb{Z}}
\newcommand{\mS}{\mathbb{S}}
\newcommand{\mQ}{\mathbb{Q}}
\newcommand{\cH}{\mathcal{H}}
\newcommand{\cP}{\mathcal{P}}
\newcommand{\cC}{\mathcal{C}}
\newcommand{\cO}{\mathcal{O}}
\newcommand{\cU}{\mathcal{U}}
\newcommand{\ux}{\underline{x}}
\newcommand{\uxb}{\underline{x} \grave{}}
\newcommand{\mg}{\mathfrak{g}}
\newcommand{\osp}{\mathfrak{osp}(m|2n)}
\newcommand{\sosp}{\mathfrak{so}(m)\oplus \mathfrak{sp}(2n)}
\begin{document}
\title{The orthosymplectic superalgebra in harmonic analysis}

\author{K.\ Coulembier\thanks{Ph.D. Fellow of the Research Foundation - Flanders (FWO), E-mail: {\tt Coulembier@cage.ugent.be}}}

\date{\small{Department of Mathematical Analysis}\\
\small{Faculty of Engineering -- Ghent University\\ Krijgslaan 281, 9000 Gent,
Belgium}\\
\vspace{4mm}
\small{School of Mathematics and Statistics}\\
\small{University of Sydney\\ Sydney, Australia}}

\maketitle

\begin{abstract}
We introduce the orthosymplectic superalgebra $\osp$ as the algebra of Killing vector fields on Riemannian superspace $\mR^{m|2n}$ which stabilize the origin. The Laplace operator and norm squared on $\mR^{m|2n}$, which generate $\mathfrak{sl}_2$, are orthosymplectically invariant, therefore we obtain the Howe dual pair $(\mathfrak{osp}(m|2n),\mathfrak{sl}_2)$. We study the $\osp$-representation structure of the kernel of the Laplace operator. This also yields the decomposition of the supersymmetric tensor powers of the fundamental $\osp$-representation under the action of $\mathfrak{sl}_2\times\osp$. As a side result we obtain information about the irreducible $\osp$-representations $L_{(k,0,\cdots,0)}^{m|2n}$. In particular we find branching rules with respect to $\mathfrak{osp}(m-1|2n)$. We also prove that integration over the supersphere is uniquely defined by its orthosymplectic invariance.    
 
\end{abstract}

\textbf{MSC 2000 :}   17B10, 58C50\\
\noindent
\textbf{Keywords :} Howe dual pair, orthosymplectic superalgebra, not completely reducible representations, supersymmetric tensor product, Cartan product

\section{Introduction}

In recent work, we have been developing a new approach to the study of supergeometry, by means of harmonic analysis, see e.g. \cite{MR2539324, CDBS3, Mehler, DBE1} and references therein. We consider flat superspace $\mR^{m|2n}$ generated by $m$ commuting or bosonic variables and $2n$ anti-commuting or fermionic variables. The main feature of this approach is the introduction of a super Laplace operator $\nabla^2$ and a super norm squared $R^2$. These generate the Lie algebra $\mathfrak{sl}_2$ and are invariant under the action of the product of the orthogonal and the symplectic group $O(m)\times Sp(2n)$. This leads to the pair $\left(\sosp,\mathfrak{sl}_2\right)$, which has been studied in \cite{DBE1}, as a generalization of the Howe dual pair $(\mathfrak{so}(m),\mathfrak{sl}_2)$ for harmonic analysis on $\mR^m$, see \cite{MR0986027}. However, this pair does not satisfy the requirements to be an actual Howe dual pair. This manifests itself in several ways that are listed at the beginning of Section \ref{introOSp}. In short, there are too many invariant functions and linear maps on $\mR^{m|2n}$ and the spaces of spherical harmonics (i.e. polynomial null-solutions of $\nabla^2$ of a fixed degree) are not irreducible $\sosp$-representations. This implies that the polynomials do not have a multiplicity free decomposition into irreducible pieces under the joint action of the dual pair, which is a fundamental property of Howe dual pairs, see \cite{MR2028498, MR0986027, MR1272070, MR1827078}. A similar situation occurs in the theory of Dunkl operators, with pair $(G,\mathfrak{sl}_2)$ where $G <O(m)$ is a Coxeter group, see e.g. \cite{MR1827871}. These problems will be solved by considering the orthosymplectic superalgebra $\mathfrak{osp}(m|2n)$. The formalism of harmonic analysis will prove to be very useful, although many of the problems can be posed in a purely representation-theoretical way. The space of polynomials on $\mR^{m|2n}$, denoted by $\cP$, corresponds to the supersymmetric tensor powers of the fundamental $\osp$-representation $V=L^{m|2n}_{(1,0,\cdots,0)}$. Hence $\cP\cong T(V)=\oplus_{k=0}^\infty \odot^k V$, with $V$ the $m|2n$-dimensional super vectorspace. One of the main results of this paper is therefore a complete decomposition of this $\osp$-representation with the help of the $\mathfrak{sl}_2$-realization mentioned above. In particular the spherical harmonics correspond to the traceless supersymmetric tensors. In the completely reducible case this is also the Cartan product inside $\otimes^kV$. In the other cases the traceless supersymmetric tensors turn out to still correspond to the representation generated by the vector of highest weight inside $\otimes^k V$ even though this representation is not irreducible. This can be seen as a generalized notion of Cartan product. The full tensor powers $\otimes ^k L^{m|2n}_{(1,0,\cdots,0)}$ were the object of study in \cite{MR1632811}.

In classical harmonic analysis, the orthogonal group $O(m)$ corresponds to the group of isometries of $\mR^m$ which stabilize the origin, hence $\mathfrak{so}(m)$ corresponds to the Killing vector fields which stabilize the origin. We use the definition in \cite{MR2434470} of the superalgebra of Killing vector fields on a Riemannian supermanifold. By doing so we obtain the Lie superalgebra $\osp$ as the algebra to consider on $\mR^{m|2n}$. Since the generators of $\mathfrak{sl}_2$ are $\osp$-invariant we obtain the dual pair $(\mathfrak{osp}(m|2n),\mathfrak{sl}_2)$. Because this dual pair solves the above-mentioned problems, it is the correct Howe dual pair for super harmonic analysis on $\mR^{m|2n}$. The bigger algebra in which $\osp$ and $\mathfrak{sl}_2$ are each other's centralizers is $\mathfrak{osp}(4n|2m)$.  A more general theory of Howe dual pairs with orthosymplectic algebras can be found in \cite{MR2028498, MR1893457, MR1272070, MR1827078}. 

We prove that the integration over the supersphere as introduced in \cite{DBE1, MR2539324} is orthosymplectically invariant, which yields a nice unique characterization in Theorem \ref{Pizuniekosp}. The explicit expression for an invariant integration over the supersphere is important for the generalization of field theories on the sphere to superspace, see e.g. \cite{MR1897209, MR2125586} and for the theory of invariant integration on supergroups, see e.g. \cite{MR2667819, CZ, MR2172158}. This integration can also naturally be defined over the supersphere manifold $\mS^{m-1|2n}$, which can be imbedded in flat superspace as will be done in forthcoming work. The existence and unicity of the integral in that approach can be deduced from the general theory in \cite{MR2667819}. 

In \cite{MR2395482} it was proved that the spaces of spherical harmonics $\cH_k=\cP_k\cap$Ker$\Delta$ of fixed degree $k$ on $\mR^{m|2n}$ are irreducible $\mathfrak{osp}(m|2n)$-modules if $m-2n\ge 2$. In Theorem \ref{irrH} we prove that this result is still valid as long as $m-2n\not\in-2\mN$ holds. This gives a multiplicity free irreducible direct sum decomposition for the supersymmetric tensor space $T(V)$ under the action of $\mathfrak{sl}_2\times \osp$, with $V=L^{m|2n}_{(1,0,\cdots,0)}$ the natural representation space for $\osp$. This multiplicity free decomposition has the additional property that each $\mathfrak{sl}_2$-representation is paired up with exactly one $\mathfrak{osp}(m|2n)$-representation. This implies that $(\osp,\mathfrak{sl}_2)$ is a Howe dual pair for $T(V)\cong \cP$ if $m-2n\not\in-2\mN$. The results of this realization of the Howe duality are summarized in Theorem \ref{AllHowe}. Also the $\mathfrak{osp}(m|2n)$-representations of spherical harmonics for the cases $m-2n\in-2\mN$ are studied. They are always indecomposable, but the irreducibility depends on the degree $k$ of the spherical harmonics $\cH_k$. The results are summarized in Theorem \ref{irrH}, Equation \eqref{hkLk1} and \eqref{hkLk2} and Theorem \ref{Lkrep}.

Using all these results we obtain polynomial realizations of the simple $\osp$-module $L^{m|2n}_{(k,0,\cdots,0)}$ with highest weight $(k,0,\cdots,0)$ for all values of $(m,n,k)\in\mN^3$. This completes the construction of $L^{m|2n}_{(k,0,\cdots,0)}$ as the tracefree supersymmetric part of tensor products of $L^{m|2n}_{(1,0,\cdots,0)}$ in \cite{MR0621253} by including the cases $m-2n\in-2\mN$ in Theorem \ref{Lkrep}. As a side result this gives the dimension of the representation $L^{m|2n}_{(k,0,\cdots,0)}$ and the decomposition into irreducible representations as an $\sosp$-representation. This information is nontrivial since the representations are almost all atypical, see e.g. \cite{MR051963}. We also obtain the branching rules for $L^{m|2n}_{(k,0,\cdots,0)}$ as an $\mathfrak{osp}(m-1|2n)$-module in Theorem \ref{branchingThm}.

The paper is organized as follows. First a short introduction to harmonic analysis on superspace is given. The algebra of Killing vector fields on $\mR^{m|2n}$ which stabilize the origin is calculated to be $\osp$. Next, we show that the introduction of the Howe dual pair $(\mathfrak{osp}(m|2n),\mathfrak{sl}_2)$ solves the three problems of the dual pair $(SO(m)\times Sp(2n),\mathfrak{sl}_2)$. In particular the unicity of the supersphere integral as an orthosymplectically invariant functional is proved. Then, the $\osp$-representations of spherical harmonics and polynomials on the supersphere are studied which leads to a realization of the Howe duality $\mathfrak{sl}_2\times\osp\subset\mathfrak{osp}(4n|2m)$. Finally, the theory of spherical harmonics is used to obtain new information on the irreducible representations $L^{m|2n}_{(k,0,\cdots,0)}$.

\section{Preliminaries}

\label{preliminaries}

\subsection{Harmonic analysis on Euclidean space}
\label{classHarm}
We consider the Euclidean space $\mR^m$ with $m$ variables $\ux=(x_1,\cdots,x_m)$. The standard orthogonal metric leads to the differential operators $\nabla^2_b=\sum_{j=1}^m\partial_{x_j}^2$, $r^2=\sum_{j=1}^mx_j^2$ and $\mE_b=\sum_{j=1}^mx_j\partial_{x_j}$ acting on functions on $\mR^m$. The operators $-\nabla^2_b/2$, $r^2/2$ and $\mE_b+m/2$ generate the Lie algebra $\mathfrak{sl}_2$. Since they are invariant under the action of the orthogonal group $O(m)$ we obtain the Howe dual pair $(\mathfrak{so}(m),\mathfrak{sl}_2)$, see \cite{MR0986027}. This duality is captured in the Fischer decomposition of the space of polynomials
\begin{eqnarray*}
\mR[x_1,\cdots,x_m]&=&\bigoplus_{j=0}^\infty\bigoplus_{k=0}^\infty r^{2j}\cH_k^b,
\end{eqnarray*}
with $\cH_k^b$ the polynomial null-solutions of $\nabla_b^2$ homogeneous of degree $k$. The blocks $r^{2j}\cH_k^b$ correspond to irreducible subrepresentations of the $\mathfrak{so}(m)$-representation on $\mR[x_1,\cdots,x_m]$. The isotypic components, which are the spaces $\mR[r^2]\cH_k^b=\bigoplus_{j=0}^\infty r^{2j}\cH_k^b$ are irreducible lowest weight modules for $\mathfrak{sl}_2$, with lowest weight $k+m/2$ and weight vectors $r^{2j}\cH_k^b$. This implies that $\mR[x_1,\cdots,x_m]$ has a multiplicity free irreducible direct sum decomposition under the action of $\mathfrak{sl}_2\times \mathfrak{so}(m)$.

The integral $\int_{\mS^{m-1}}$, represents integration over the unit the sphere and is the unique linear functional on $\cC^\infty(\mR^m)$ which is $\mathfrak{so}(m)$-invariant, satisfies $\int_{\mS^{m-1}}1=\sigma_m=\frac{2\pi^{m/2}}{\Gamma(m/2)}$ and 
\begin{eqnarray*}
\int_{\mS^{m-1}}(r^2-1)f=0&&\mbox{for all } f\in  \cC^\infty(\mR^m) .
\end{eqnarray*}
Note that we omit the measure $d\sigma$ on the sphere in this notation. The reason is that we see the unit sphere integration as an invariant functional on the space of functions on $\mR^m$ rather that integration over the manifold $\mS^{m-1}$ with an invariant measure. This approach will allow us to introduce the supersphere integral in a short efficient manner, without considering the details of the supersphere manifold.

\subsection{Harmonic analysis on superspace}
\label{subprel}

We repeat some results on the theory of harmonic analysis on the flat supermanifold $\mR^{m|2n}=(\mR^m,\cC^\infty_{\mR^m}\otimes\Lambda_{2n})$, as developed in \cite{MR2539324, CDBS3, DBE1}. The main object is the algebra of sections of the sheaf, which is the superalgebra ($\mZ_2$-graded algebra) of superfunctions $\cO(\mR^{m|2n})=\cC^\infty(\mR^m)\otimes\Lambda_{2n}$, where $\Lambda_{2n}$ is the Grassmann algebra generated by $2n$ anti-commuting variables, denoted by ${x\grave{}}_i$. The supervector $\bold{x}$ is defined as

\[
\bold{x}=(X_1,\cdots,X_{m+2n})=(\ux,\uxb)=(x_1,\cdots,x_m,{x\grave{}}_1,\cdots,{x\grave{}}_{2n}).
\]

The commutation relations for the Grassmann algebra and the bosonic variables are captured in the relation $X_iX_j=(-1)^{[i][j]}X_jX_i$ with $[i]=0$ if $i\le m$ and $[i]=1$ otherwise. Elements of $\cO(\mR^{m|2n})$ or $\Lambda_{2n}$ that consist of terms that are the product of an even amount of generators of $\Lambda_{2n}$ are called even and we use the notation $|f|=0$, for such functions. Odd functions are defined likewise and then $|f|=1$ holds.

The orthosymplectic metric $g$ on $\mR^{m|2n}$ is defined as $g\in\mR^{(m+2n)\times(m+2n)}$
\begin{eqnarray}
\label{defg}
g&=&\left( \begin{array}{c|c} I_m&0\\ \hline \vspace{-3.5mm} \\0&J
\end{array}
 \right)
\end{eqnarray}
with $J_{2n}\in\mR^{2n\times 2n}$ given by 
\begin{eqnarray}
\label{Jmatrix}
J_{2n}&=&\frac{1}{2}\left( \begin{array}{cccccc} 0&-1&&&\\1&0&&&\\&&\ddots&&\\&&&0&-1\\&&&1&0 
\end{array}
 \right).
 \end{eqnarray}

Define $X^j=\sum_iX_ig^{ij}$. The square of the `radial coordinate' is given by
\begin{eqnarray*}
R^2=\langle \bold{x},\bold{x}\rangle=\sum_{j=1}^{m+2n}X^jX_j =\sum_{i=1}^mx_i^2-\sum_{j=1}^n{x\grave{}}_{2j-1}{x\grave{}}_{2j}=r^2+\theta^2.
\end{eqnarray*}
The partial derivatives are denoted by $\nabla_j=\partial_{X^j}$. The raising of indices is given by $\nabla^j=\sum_i\nabla_ig^{ij}$, which implies $\nabla^j=(-1)^{[j]}\partial_{X_j}$. The super Laplace operator is given by 
\[\nabla^2=\sum_{j=1}^{m+2n}\nabla^j\nabla_j=\nabla^2_b-4\sum_{j=1}^n\partial_{{x\grave{}}_{2j-1}}\partial_{{x\grave{}}_{2j}}.\]
The super Euler operator is defined as $\mE=\sum_{i=1}^mx_i\partial_{x_i}+\sum_{j=1}^{2n}{x\grave{}}_j\partial_{{x\grave{}}_j}=\mE_b+\mE_f$. The operators $-\nabla^2/2$, $R^2/2$ and $\mE+M/2$, with $M=m-2n$, again generate the Lie algebra $\mathfrak{sl}_2$, see \cite{DBE1}:
\begin{eqnarray}
\nonumber
\left[\nabla^2/2,R^2/2\right]&=&\mE+M/2,\\
\label{sl2rel}
\left[\nabla^2/2,\mE+M/2\right]&=&2\nabla^2/2\quad \mbox{and}\\
\nonumber
\left[R^2/2,\mE+M/2\right]&=&-2R^2/2.
\end{eqnarray}
Since the parameter $M$ appears where in classical harmonic analysis the dimension appears, we call $M=m-2n$ the superdimension. The main new feature is that this value can be negative, which implies that the diagonal element of $\mathfrak{sl}_2$ can have negative eigenvalues in this realization.

The space of super polynomials is defined as
\begin{eqnarray*}
\cP=\mR[x_1,\cdots,x_m]\otimes \Lambda_{2n}&\subset& \cO(\mR^{m|2n})=\cC^\infty(\mR^m)\otimes\Lambda_{2n}.
\end{eqnarray*}
The homogeneous polynomials of degree $k$ are the elements $P\in\cP$ which satisfy $\mE P=kP$. The corresponding space is denoted by $\cP_k$. The null-solutions of the super Laplace operator are called harmonic superfunctions. The space of the spherical harmonics of degree $k$ is denoted by $\cH_k=\cP_k\cap\ker \nabla^2$. In the purely fermionic case $m=0$ we use the notation $\cH_{k}^{f}$. The Fischer decomposition holds in superspace when the superdimension is not even and negative or in the purely fermionic case, see \cite{DBE1}.

\begin{lemma}[Fischer decomposition]
If $M=m-2n \not \in -2 \mN$, $\cP$ decomposes as
\begin{eqnarray}
\cP = \bigoplus_{k=0}^{\infty} \cP_k= \bigoplus_{j=0}^{\infty} \bigoplus_{k=0}^{\infty} R^{2j}\cH_k.
\label{superFischer}
\end{eqnarray}
In case $m=0$, the decomposition is given by $\Lambda_{2n} = \bigoplus_{k=0}^{n} \left(\bigoplus_{j=0}^{n-k} \theta^{2j} \cH^f_k \right)$.
\label{superFischerLemma}
\end{lemma}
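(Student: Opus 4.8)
The plan is to deduce the decomposition from the $\mathfrak{sl}_2$-structure \eqref{sl2rel}, reducing everything to the single-step splitting
\[
\cP_k = \cH_k \oplus R^2\cP_{k-2},
\]
which I would establish by induction on $k$ and then iterate to obtain \eqref{superFischer}. The base cases $\cP_0$ and $\cP_1$ are immediate, since $\nabla^2$ lowers the degree by two and hence annihilates everything of degree $0$ or $1$, so that $\cP_0=\cH_0$ and $\cP_1=\cH_1$.

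The engine of the whole argument is an explicit formula for the action of $\nabla^2$ on the $R^2$-ladder built on a spherical harmonic. Writing $X=\nabla^2/2$, $Y=R^2/2$ and $H=\mE+M/2$, the relations \eqref{sl2rel} say that $(X,Y,H)$ is an $\mathfrak{sl}_2$-triple in which $Y$ raises and $X$ lowers the $H$-eigenvalue, and any $h\in\cH_p$ is a lowest weight vector of weight $\lambda=p+M/2$, being killed by $X=\nabla^2/2$. The standard lowest weight identity $XY^jh=j(\lambda+j-1)Y^{j-1}h$ then translates into
\[
\nabla^2 R^{2j}h = 4j\left(p+\frac{M}{2}+j-1\right)R^{2(j-1)}h, \qquad h\in\cH_p,\quad j\ge 1.
\]
The crucial observation is that the scalar $4j(p+M/2+j-1)$ vanishes for some $j\ge 1$, $p\ge 0$ exactly when $M=-2(p+j-1)\in\{0,-2,-4,\dots\}=-2\mN$. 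Hence, under the hypothesis $M\notin-2\mN$, every coefficient occurring in this formula is nonzero; this is the single place where the superdimension constraint enters, and isolating it is the main point of the proof.

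With this formula in hand the induction step is bookkeeping. Assuming \eqref{superFischer} up to degree $k-2$, directness of $\cH_k\oplus R^2\cP_{k-2}$ follows by taking $u=R^2p\in\cH_k$, expanding $p=\sum_j R^{2j}h_j$ and applying $\nabla^2$: the nonvanishing coefficients together with the induction hypothesis force each $h_j=0$. The same formula lets me invert $\nabla^2$ explicitly on each ladder, so that $\nabla^2:\cP_k\to\cP_{k-2}$ is surjective and $\dim\cH_k=\dim\cP_k-\dim\cP_{k-2}$ by rank--nullity; injectivity of $R^2$ on $\cP_{k-2}$ follows by the same cancellation. A telescoping dimension count then gives $\dim(\cH_k\oplus R^2\cP_{k-2})=\dim\cP_k$, and since the sum sits inside $\cP_k$ with full dimension it exhausts it, closing the induction.

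Finally, the purely fermionic case $m=0$ is exactly the resonant regime $M=-2n\in-2\mN$ that the argument above excludes, so it must be treated separately. The saving feature is that $\Lambda_{2n}$ is finite-dimensional with $\theta^{2(n+1)}=0$, so the $\mathfrak{sl}_2$-modules generated by the $\cH_k^f$ are finite-dimensional: the displayed formula shows that the coefficient first vanishes at $j=n-k+1$, which is precisely where $\theta^{2j}h=0$ anyway, so each $h\in\cH_k^f$ generates a clean ladder $\theta^{2j}h$ with $0\le j\le n-k$. Running the same directness-and-dimension induction over this finite range yields $\Lambda_{2n}=\bigoplus_{k=0}^n\bigoplus_{j=0}^{n-k}\theta^{2j}\cH_k^f$. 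I expect the only genuinely delicate points to be verifying that the resonance in the fermionic case lines up exactly with the nilpotency of $\theta^2$, and, in the general case, the careful matching of coefficients that pins the obstruction set down to $-2\mN$.
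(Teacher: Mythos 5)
Your argument is correct and is essentially the standard proof of this lemma: the paper itself states it without proof, citing \cite{DBE1}, where the decomposition is obtained from exactly your identity $\nabla^2 R^{2j}h=4j\left(p+\tfrac{M}{2}+j-1\right)R^{2j-2}h$ for $h\in\cH_p$, the observation that these coefficients vanish precisely when $M\in-2\mN$, and the same induction on the degree giving $\cP_k=\cH_k\oplus R^2\cP_{k-2}$. The fermionic point you flag does close exactly as you expect: since $\Lambda_{2n}$ is finite dimensional, a vector $h\in\cH^f_k$ (which is a lowest weight vector of weight $k-n$ for the $\mathfrak{sl}_2$-triple) must satisfy $\theta^{2(n-k+1)}h=0$, because otherwise $\theta^{2(n-k+1)}h$ would be annihilated by $\nabla^2$ (the coefficient vanishes there for the first time) and have positive weight $n-k+2$, so the descent argument with the now nonvanishing coefficients would produce an infinite nonzero string $\theta^{2j}\theta^{2(n-k+1)}h$, contradicting finite dimensionality.
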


If $m\not=0$, the dimension of $\cH_k$ is given by
\begin{eqnarray}
\label{dimHk}
\dim\cH_k&=&\sum_{i=0}^{\min(k,2n)}\binom{2n}{i}\binom{k-i+m-1}{m-1}-\sum_{i=0}^{\min(k-2,2n)}\binom{2n}{i}\binom{k-i+m-3}{m-1},
\end{eqnarray}
see \cite{DBE1}.

The matrix Lie group $O(m)\times Sp(2n)$ consists of all matrices $S\in\mR^{(m+2n)\times(m+2n)}$ satisfying
\begin{eqnarray}
\label{OxSpdef}
\langle S\cdot\bold{x},S\cdot\bold{x}\rangle=\langle\bold{x},\bold{x}\rangle,
\end{eqnarray}
which is equivalent with $S^TgS=g$. Each such matrix $S$ is a block matrix $S=\left( \begin{array}{c|c} A&0\\ \hline \vspace{-3.5mm} \\0&B
\end{array}
 \right)$ with $A\in\mR^{m\times m}$ satisfying $A^TA=I_m$ and $B\in\mR^{2n\times 2n}$ satisfying $B^TJB=J$. The matrix $J$ is given in Equation \eqref{Jmatrix}. The action of $O(m)\times Sp(2n)$ on $\cC^\infty(\mR^m)\otimes\Lambda_{2n}$ is given by
\begin{eqnarray*}
\left(S,f(\bold{x})\right)\to f(S^{-1}\cdot\bold{x})
\end{eqnarray*}
such that $(S\cdot\bold{x})_j=\sum_{j=1}^{m+2n}S_{jk}X_k$.

Since $R^2$, $\nabla^2$ and $\mE+M/2$ generate $\mathfrak{sl}_2$ and are $O(m)\times Sp(2n)$-invariant, we obtain the dual pair $\left(\sosp,\mathfrak{sl}_2\right)$. This is again closely related to the Fischer decomposition \eqref{superFischer} for $M\not\in-2\mN$ or $m=0$. The blocks $\bigoplus_{j}R^{2j}\cH_k$ are irreducible lowest weight $\mathfrak{sl}_2$-representations with weight vectors $R^{2j}\cH_k$ and lowest weight $k+M/2$. The weight vectors $R^{2j}\cH_k$ are $SO(m)\times Sp(2n)$-representations. However, in full superspace ($m\not=0\not= n$), these representations are not irreducible. This also implies that $\cP$ does not correspond to a multiplicity free irreducible direct sum decomposition for $ \mathfrak{sl}_2\times (\sosp)$.

 In \cite{DBE1} the $\sosp$-module $\cH_k$ was decomposed into simple modules. First, the following polynomials need to be introduced.
\begin{lemma}
If $0\le q \le n$ and $0\le k\le n-q$, there exists a homogeneous polynomial $f_{k,p,q}=f_{k,p,q}(r^2,\theta^2)$ (unique up to a multiplicative constant) of degree $2k$ such that $f_{k,p,q} \cH_p^b \otimes \cH_q^f \neq 0$ and $\Delta (f_{k,p,q} \cH_p^b \otimes \cH_q^f) = 0$.
This polynomial is given explicitly by
\[
 f_{k,p,q}=\sum_{s=0}^ka_sr^{2k-2s}\theta^{2s} \quad \mbox{with}\quad a_s=\binom{k}{s}\frac{(n-q-s)!}{\Gamma (\frac{m}{2}+p+k-s)}\frac{\Gamma(\frac{m}{2}+p+k)}{(n-q-k)!}.
\]
\label{polythm}
\end{lemma}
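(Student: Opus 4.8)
The plan is to reduce the entire statement to a single two-term recurrence for the coefficients $a_s$ and then solve it. Since the bosonic and fermionic derivatives act on disjoint sets of variables, the super Laplacian splits as $\nabla^2=\nabla^2_b+\nabla^2_f$ with $\nabla^2_f=-4\sum_{j=1}^n\partial_{{x\grave{}}_{2j-1}}\partial_{{x\grave{}}_{2j}}$, both summands being even operators. Taking $H_p\in\cH_p^b$ and $H_q\in\cH_q^f$ nonzero, the fact that $r^{2a}H_p$ is even and independent of the $\theta$-variables while $\theta^{2b}H_q$ is independent of the $\ux$-variables gives, with no cross terms,
\[
\nabla^2\bigl(r^{2a}\theta^{2b}H_pH_q\bigr)=\bigl(\nabla^2_b(r^{2a}H_p)\bigr)\theta^{2b}H_q+r^{2a}H_p\bigl(\nabla^2_f(\theta^{2b}H_q)\bigr).
\]
The first ingredient is the classical identity $\nabla^2_b(r^{2a}H_p)=2a(2a+2p+m-2)\,r^{2a-2}H_p$, which follows from $[\nabla^2_b,r^2]=4\mE_b+2m$. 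The second is its fermionic mirror image $\nabla^2_f(\theta^{2b}H_q)=2b(2b+2q-2n-2)\,\theta^{2b-2}H_q$; this is the same computation with the superdimension $-2n$ replacing $m$, justified by the relation $[\nabla^2_f,\theta^2]=4\mE_f-4n$, which is obtained by subtracting the bosonic relation from the full $\mathfrak{sl}_2$-relation \eqref{sl2rel}.

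Next I would substitute the ansatz $f_{k,p,q}=\sum_{s=0}^k a_s r^{2k-2s}\theta^{2s}$ and sort the image $\nabla^2(f_{k,p,q}H_pH_q)$, homogeneous of degree $2k-2$ in $(r^2,\theta^2)$, according to the monomials $r^{2k-2t-2}\theta^{2t}H_pH_q$ for $t=0,\dots,k-1$. The bosonic part of the $s$-th term feeds the monomial $t=s$ and the fermionic part of the $s$-th term feeds $t=s-1$, so vanishing of each coefficient reads (after dividing out a common factor of $4$)
\[
a_t\,(k-t)\bigl(k-t+p+m/2-1\bigr)=a_{t+1}\,(t+1)(n-q-t),\qquad 0\le t\le k-1.
\]
Under the hypotheses $0\le q\le n$ and $0\le k\le n-q$ the factor $n-q-t$ is strictly positive for all $t\le k-1$, which is exactly where $k\le n-q$ is used: it forbids the resonance $t+q=n$. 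Together with $(t+1)(k-t)>0$, every coefficient in the recurrence is nonzero, so $a_1,\dots,a_k$ are determined uniquely from $a_0$. This yields existence and uniqueness up to a multiplicative constant at once, the solution space of the $k$ independent equations in the $k+1$ unknowns being one-dimensional.

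Finally I would check that the stated closed form solves this recurrence by evaluating the ratio $a_{t+1}/a_t$ directly, using $\binom{k}{t+1}/\binom{k}{t}=(k-t)/(t+1)$, $(n-q-t-1)!/(n-q-t)!=1/(n-q-t)$ and $\Gamma(m/2+p+k-t)/\Gamma(m/2+p+k-t-1)=m/2+p+k-t-1$; their product matches the right-hand side above. It remains to record that all quantities are well defined: $0\le s\le k\le n-q$ keeps the factorial arguments nonnegative integers, and for $m\ge1$ the arguments $m/2+p+k-s\ge1/2$ keep the Gamma functions finite and nonzero, so each $a_s$ is finite and, since the ratios never vanish once $a_0=(n-q)!/(n-q-k)!\neq0$, every $a_s$ is nonzero. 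Nonvanishing of $f_{k,p,q}H_pH_q$ then follows from its leading term $a_0r^{2k}H_pH_q$, which lies in fermionic degree $q$ and hence cannot be cancelled by the remaining terms, all of fermionic degree $q+2s>q$. I expect the main obstacle to be the first step, namely pinning down the fermionic identity for $\nabla^2_f(\theta^{2b}H_q)$ with the correct constants and signs in the Grassmann calculus; once both Laplacian actions are secured, the rest is mechanical recurrence bookkeeping, with the hypothesis $k\le n-q$ entering solely to keep the denominator $n-q-t$ from vanishing.
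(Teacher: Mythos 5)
Your proposal is correct, and it is essentially the standard argument: the paper itself states this lemma without proof, importing it from \cite{DBE1}, and the derivation there likewise splits $\nabla^2=\nabla^2_b+\nabla^2_f$, uses the identities $\nabla^2_b(r^{2a}H_p)=2a(2a+2p+m-2)r^{2a-2}H_p$ and $\nabla^2_f(\theta^{2b}H_q)=2b(2b+2q-2n-2)\theta^{2b-2}H_q$, and solves the resulting two-term recurrence for the $a_s$. Your version checks out in all details (the recurrence $a_t(k-t)(k-t+p+\frac{m}{2}-1)=a_{t+1}(t+1)(n-q-t)$ matches the stated closed form, and the condition $k\le n-q$ enters exactly as you say); the only point worth making explicit is that sorting by the monomials $r^{2k-2t-2}\theta^{2t}$ and the nonvanishing of $f_{k,p,q}H_pH_q$ both rest on $\theta^{2t}H_q\neq 0$ for $t\le n-q$, which is the injectivity implicit in the fermionic Fischer decomposition of Lemma \ref{superFischerLemma} and which your fermionic-degree grading argument uses tacitly.
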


In particular, we find $f_{0,p,q}=1$. Using these polynomials we can obtain a full decomposition of the space of spherical harmonics of degree $k$.

\begin{theorem}[Decomposition of $\cH_k$]
Under the action of $\sosp$ the space $\cH_k$ decomposes as
\label{decompintoirreps}
\[
\cH_{k} = \bigoplus_{j=0}^{\min(n, k)} \bigoplus_{l=0}^{\min(n-j,\lfloor \frac{k-j}{2} \rfloor)} f_{l,k-2l-j,j} \cH^b_{k-2l-j} \otimes \cH^f_{j},
\]with $f_{l,k-2l-j,j}$ the polynomials determined in Lemma \ref{polythm}.
\end{theorem}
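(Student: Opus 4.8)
The plan is to disentangle the joint $\mathfrak{so}(m)\oplus\mathfrak{sp}(2n)$-action from the $\mathfrak{sl}_2$-action and thereby reduce the description of $\cH_k=\cP_k\cap\ker\nabla^2$ to a finite linear-algebra problem on ``radial'' multiplicity spaces. Since $\cP=\mR[x_1,\dots,x_m]\otimes\Lambda_{2n}$, I would first tensor the classical (bosonic) Fischer decomposition $\mR[x_1,\dots,x_m]=\bigoplus_{a\ge0}\bigoplus_{p\ge0}r^{2a}\cH_p^b$ with the purely fermionic Fischer decomposition $\Lambda_{2n}=\bigoplus_{q=0}^{n}\bigoplus_{b=0}^{n-q}\theta^{2b}\cH_q^f$, which is the $m=0$ case of Lemma \ref{superFischerLemma}. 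Collecting terms of fixed degree $k$ (so that only pairs with $k-p-q\in 2\mN$ occur) yields
\[
\cP_k=\bigoplus_{p,q}\ \Bigl(\bigoplus_{\substack{a+b=N,\ a\ge0\\ 0\le b\le n-q}} r^{2a}\theta^{2b}\Bigr)\,\cH_p^b\otimes\cH_q^f,\qquad N=\tfrac{k-p-q}{2},
\]
where each $\cH_p^b\otimes\cH_q^f$ is an irreducible $\sosp$-module and, for fixed $(p,q)$, the inner sum is exactly the $(p,q)$-isotypic component of $\cP_k$. Because $\nabla^2$, $R^2$ and $\mE+M/2$ generate $\mathfrak{sl}_2$ and are $\sosp$-invariant, the map $\nabla^2\colon\cP_k\to\cP_{k-2}$ is $\sosp$-equivariant; since these are completely reducible finite-dimensional modules, Schur's lemma shows it preserves isotypic type and is determined by its action on the multiplicity spaces $M_{p,q}=\operatorname{span}\{\,r^{2a}\theta^{2b}:a+b=N\,\}$. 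Thus $\cH_k=\bigoplus_{p,q}\bigl(M_{p,q}\cap\ker\nabla^2\bigr)\,\cH_p^b\otimes\cH_q^f$, and the theorem reduces to computing $\ker\nabla^2$ on each $M_{p,q}$.

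To make this action explicit I would use the two single-variable identities $\nabla^2_b(r^{2a}\cH_p^b)=2a(2a+2p+m-2)\,r^{2a-2}\cH_p^b$ and its fermionic counterpart $(\nabla^2-\nabla^2_b)(\theta^{2b}\cH_q^f)=c(b,q)\,\theta^{2b-2}\cH_q^f$, the latter obtained from the $\mathfrak{sl}_2$-relations \eqref{sl2rel} with the superdimension $-2n$ playing the role of $m$ (so that $c(0,q)=0$ and $c(b,q)\neq0$ for $1\le b\le n-q$). Writing a general element of $M_{p,q}$ as $\sum_b\gamma_b\,r^{2(N-b)}\theta^{2b}$ and collecting the coefficient of each $r^{2(N-b)-2}\theta^{2b}$, one finds that $\nabla^2$ is represented, in the monomial bases of $M_{p,q}$ and of the degree-$(k-2)$ multiplicity space, by an upper-triangular (bidiagonal) matrix with diagonal entries $\alpha_b=2(N-b)(2(N-b)+2p+m-2)$ and superdiagonal entries $c(b,q)$. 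When $0\le N\le n-q$ the space has basis $b=0,\dots,N$; here $\alpha_N=0$ while $\alpha_0,\dots,\alpha_{N-1}$ are nonzero (as $N-b\ge1$, $p\ge0$, $m\ge1$), so the matrix has full rank $N$ and $M_{p,q}\cap\ker\nabla^2$ is one-dimensional. Lemma \ref{polythm} then names its generator $f_{N,p,q}$, and re-indexing $(p,q,N)=(k-2l-j,\ j,\ l)$ reproduces precisely the summands and ranges $0\le j\le\min(n,k)$, $0\le l\le\min(n-j,\lfloor\frac{k-j}{2}\rfloor)$ of the statement.

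The one regime not covered by Lemma \ref{polythm}, and the hard part, is the complementary range $N>n-q$: here $M_{p,q}$ has basis $b=0,\dots,n-q$, the target multiplicity space has the same dimension $n-q+1$, and the lemma supplies no harmonic. I would show that $\nabla^2$ is in fact injective on $M_{p,q}$ in this range, so that these components contribute nothing to $\cH_k$. This again follows from the triangular structure: the relevant diagonal entries are now $\alpha_0,\dots,\alpha_{n-q}$ with $N-b\ge N-(n-q)\ge1$, hence all nonzero, and the square matrix is invertible. Crucially this uses only the bosonic coefficients $\alpha_b$, so the conclusion is uniform in the superdimension $M=m-2n$, even in the cases $M\in-2\mN$ where the global Fischer decomposition \eqref{superFischer} fails. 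Finally I would corroborate the result by the independent check that $\sum_{j,l}\dim\cH^b_{k-2l-j}\,\dim\cH^f_{j}$ over the stated ranges agrees with the closed form \eqref{dimHk} for $\dim\cH_k$; together with directness of the sum (distinct $(l,j)$ give distinct highest weights $(p,q)$, hence non-isomorphic $\sosp$-summands) and the inclusion ``$\supseteq$'' furnished immediately by Lemma \ref{polythm}, this settles the equality.
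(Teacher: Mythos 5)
Your proposal is correct, but a direct comparison with ``the paper's proof'' is not possible: the paper states this theorem in the Preliminaries as a quoted result from \cite{DBE1} and gives no proof of it, only the key ingredient Lemma \ref{polythm}. Measured against the argument in \cite{DBE1}, your route is the natural one (tensor the bosonic and fermionic Fischer decompositions, reduce $\nabla^2$ to its action on the radial multiplicity spaces $M_{p,q}$ spanned by $r^{2a}\theta^{2b}$ with $a+b=N$), but your completeness step is cleaner: the observation that $\nabla^2$ is represented on $M_{p,q}$ by a bidiagonal matrix with diagonal entries $2(N-b)\bigl(2(N-b)+2p+m-2\bigr)$ and superdiagonal entries $2b(2b+2q-2n-2)$ simultaneously yields the one-dimensionality of the kernel for $0\le N\le n-q$ (recovering the uniqueness assertion of Lemma \ref{polythm}) and injectivity for $N>n-q$, so the concluding dimension count against \eqref{dimHk} is genuinely redundant corroboration rather than a needed step, and -- as you rightly emphasize -- the argument never invokes the global super Fischer decomposition \eqref{superFischer}, so it is uniform in the superdimension and covers $M\in-2\mN$, which matters for the later sections of the paper. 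Two small remarks: the appeal to Schur's lemma is superfluous (and slightly delicate for $m\le 2$, where $\cH_p^b$ is not irreducible under $\mathfrak{so}(m)$, and for $m=1$, where $\cH_0^b\cong\cH_1^b$ as trivial modules), but nothing breaks, because your explicit identities for $\nabla_b^2(r^{2a}\cH_p^b)$ and $\nabla_f^2(\theta^{2b}\cH_q^f)$ already show that $\nabla^2$ acts as $(\mbox{matrix})\otimes\mathrm{id}$ on each block $M_{p,q}\otimes\cH_p^b\otimes\cH_q^f$ without mixing distinct $(p,q)$; and directness of the final sum likewise follows from the tensored Fischer decompositions themselves, without the highest-weight argument.
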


The corresponding projection operators are given in \cite{DBE1},
\begin{equation}
\label{projpiecesSphHarm}
\mQ_{r,s}^k= \prod_{i=0, \;  i \neq k-2r-s}^{k} \dfrac{\Delta_{LB,b} + i(m-2+i)}{(i-k+2r+s)(k+i-2r-s+m-2)}\times  \prod_{j=0, \;  j \neq s}^{\min{(n,k)}} \dfrac{\Delta_{LB,f} + j(-2n-2+j)}{(j-s)(j+s-2n-2)},
\end{equation}
with $\Delta_{LB,b}$ and $\Delta_{LB,f}$ as defined in Equation \eqref{LB} or \eqref{LBosp} for the cases $n=0$ and $m=0$. They satisfy
\[
\mQ_{r,s}^k \left( f_{l,k-2l-j,j} \cH^b_{k-2l-j} \otimes \cH^f_{j} \right)= \delta_{rl} \delta_{sj} f_{l,k-2l-j,j} \cH^b_{k-2l-j} \otimes \cH^f_{j}.
\]

In case $m\not=0$, the supersphere is algebraically defined by the relation $R^2=1$. The integration over the supersphere was introduced in \cite{DBE1} for polynomials and generalized to a broader class of functions in \cite{MR2539324}. The uniqueness of this integral was also proved in \cite{DBE1, MR2539324}. 
\begin{theorem}
\label{SSintOxSp}
When $m\not=0$, the unique (up to a multiplicative constant) linear functional $T: \cP \rightarrow \mC$ satisfying the following properties for all $f(\bold{x}) \in \cP$:
\begin{itemize}
\item $T(R^2 f(\bold{x})) = T(f(\bold{x}))$
\item $T(f(S \cdot \bold{x})) = T(f(\bold{x}))$, \quad for all $ S \in SO(m)\times Sp(2n)$
\item $k \neq l \quad \Longrightarrow \quad T(\cH_k \cH_l) = 0 $
\end{itemize}
is given by the Pizzetti integral
\begin{eqnarray*}
\int_{\mS^{m-1|2n}} f(\bold{x})  =  \sum_{k=0}^{\infty}  \frac{2 \pi^{M/2}}{2^{2k} k!\Gamma(k+M/2)} (\nabla^{2k} f )(0)\quad \mbox{for}\quad f(\bold{x})\in\cP.
\end{eqnarray*}
\end{theorem}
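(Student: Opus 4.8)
The statement splits into an existence claim (the Pizzetti functional satisfies the three properties) and a uniqueness claim (the three properties pin down a linear functional up to a scalar). I would treat these separately. The uniqueness half is the one with real content, and the tools I would use are the Fischer decomposition of Lemma \ref{superFischerLemma}, the decomposition of $\cH_k$ into $\sosp$-irreducibles in Theorem \ref{decompintoirreps}, and the explicit harmonic polynomials of Lemma \ref{polythm}; for existence I would lean on the $\mathfrak{sl}_2$ relations \eqref{sl2rel}.

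For uniqueness, suppose $T$ satisfies the three conditions. The invariance property, together with complete reducibility of the finite-dimensional $\sosp$-modules $\cP_d$, shows that $T$ is determined by its values on the $SO(m)\times Sp(2n)$-invariant polynomials, which (for $m\ge 2$; the case $m=1$ is handled directly) are spanned by the even monomials $r^{2a}\theta^{2b}$ with $0\le b\le n$. Writing $t_{a,b}=T(r^{2a}\theta^{2b})$, the first property in the form $t_{a+1,b}+t_{a,b+1}=t_{a,b}$ (with $t_{a,n+1}=0$, since $\theta^{2n+2}=0$) lets me express every $t_{a,b}$ through the $n+1$ boundary values $t_{0,0},\dots,t_{0,n}$, and the third property, applied with $l=0$, yields the relations $T(f_{j,0,0})=0$ for $1\le j\le n$, i.e. $n$ linear relations among the $t_{a,b}$ whose coefficients are read off from Lemma \ref{polythm} with $p=q=0$. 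When $M\not\in-2\mN$ there is a clean shortcut: since $\cH_0=\mR$ contains $1$, one has $\cH_k\cdot\cH_0=\cH_k$, so the third property gives $T(\cH_k)=0$ for all $k\ge 1$; combined with $T(R^{2j}h)=T(h)$ and Lemma \ref{superFischerLemma} this leaves only the line $\mR[R^2]$, whence $T(f)=T(1)\cdot(\text{sum of the }R^{2j}\text{-coefficients of the }\cH_0\text{-component of }f)$ at once, so $T$ is fixed by $T(1)$.

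For existence I would verify the three properties of the Pizzetti functional $f\mapsto\sum_k\frac{2\pi^{M/2}}{2^{2k}k!\Gamma(k+M/2)}(\nabla^{2k}f)(0)$ directly. Invariance is immediate, since $\nabla^2$ is built from the invariant metric $g$ and evaluation at the fixed point $0$ is invariant, so each term $(\nabla^{2k}f)(0)$ is invariant. The identity $T(R^2f)=T(f)$ is the standard Pizzetti computation: using $[\nabla^2,R^2]=4\mE+2M$ from \eqref{sl2rel}, the contribution of $\nabla^{2k}(R^2f)(0)$ telescopes against $\nabla^{2(k-1)}f(0)$, the ratio of consecutive Pizzetti coefficients exactly compensating the constant $4k(k-1+M/2)$ produced by the commutator (see \cite{DBE1}). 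For the orthogonality property, note that the first property reduces $T(h_kh_l)$ to the value on the $\cH_0$-component of $h_kh_l$, because Pizzetti annihilates every homogeneous harmonic of positive degree ($\nabla^{2k}$ of it vanishes by harmonicity for $k\ge 1$ and at the origin for $k=0$); one then invokes the orthogonality of spherical harmonics of different degrees established in \cite{DBE1, MR2539324}, equivalently the self-adjointness with respect to the integral of the spherical Laplace--Beltrami operators appearing in \eqref{projpiecesSphHarm}. Finally $T(1)=2\pi^{M/2}/\Gamma(M/2)$ fixes the normalization.

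The main obstacle is the degenerate regime $M=m-2n\in-2\mN$. There Lemma \ref{superFischerLemma} is unavailable, so the shortcut of the second paragraph breaks down, and moreover $1/\Gamma(M/2)=0$, so $T(1)=0$ for the Pizzetti functional and the normalization used above collapses; the surviving free parameter is then one of the higher boundary values rather than $T(1)$. In this case uniqueness rests entirely on the uniform invariant-theoretic reduction, and the crux becomes the linear-algebra claim that the $n$ relations $T(f_{j,0,0})=0$ are independent modulo the recursion coming from the first property, so that they cut the $(n+1)$-dimensional space of boundary data $\{t_{0,b}\}$ down to a single line. Controlling the interplay of the Gamma-function coefficients $a_s$ of Lemma \ref{polythm} with a negative even superdimension is the one point where I expect genuine computation rather than a soft representation-theoretic argument to be required.
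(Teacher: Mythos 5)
You should first note that the paper itself contains no proof of Theorem \ref{SSintOxSp}: it is quoted from \cite{DBE1, MR2539324} (the paper only proves the different, $\osp$-invariant characterization of Theorem \ref{Pizuniekosp}). Measured against the standard route of those references, your architecture is the right one: invariance reduces $T$ to its values $t_{a,b}=T(r^{2a}\theta^{2b})$ on the $SO(m)\times Sp(2n)$-invariants, property one gives the finite-difference recursion $t_{a+1,b}+t_{a,b+1}=t_{a,b}$ cutting the parameters down to $t_{0,0},\dots,t_{0,n}$, and property three must supply the remaining constraints. Your shortcut for $M\not\in-2\mN$ (property three with $l=0$ kills $T(\cH_k)$ for $k\ge1$, then Lemma \ref{superFischerLemma} finishes) is correct and complete; note moreover that $m=1$ forces $M=1-2n$ odd, so this shortcut already covers $m=1$ and your separate worry about that case is moot.

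The genuine gap is in the degenerate regime $M\in-2\mN$, and it is worse than the unfinished computation you flag: the $n$ relations $T(f_{j,0,0})=0$, $1\le j\le n$, are in general \emph{not} independent modulo the recursion, so your proposed plan fails as stated. Take $(m,n)=(2,3)$, $M=-4$. Lemma \ref{polythm} gives $f_{1,0,0}=3r^2+\theta^2$, $f_{2,0,0}=6r^4+8r^2\theta^2+2\theta^4$ and $f_{3,0,0}=6R^6$. Writing $u_b=t_{0,b}$ and eliminating via $t_{a,b}=\sum_i(-1)^i\binom{a}{i}u_{b+i}$, the three conditions become $3u_0-2u_1=0$, $6u_0-4u_1=0$ (the same condition again) and $u_0=0$: rank $2$, not $3$, leaving a two-parameter family where the theorem asserts a line. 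Structurally, one can check that the coefficient $\sum_{s=0}^j(-1)^{j-s}a_s$ of $u_j$ in the $j$-th condition vanishes exactly when $2j$ lies in the reducibility window $[2-\frac{M}{2},2-M]$ of Theorem \ref{irrH}, and that $f_{p+1,0,0}\sim R^{2p+2}$ for $M=-2p$, so that condition collapses to $T(1)=0$ via property one; the degeneracy of your system is thus the same phenomenon as Equation \eqref{submodule}. To close the argument one must exploit property three for products $\cH_k\cH_l$ of two \emph{non-constant} harmonics of different degrees (e.g.\ the invariant components of $\cH_1\cdot\cH_3$), not merely $\cH_{2j}\cdot\cH_0$. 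A second, smaller hole sits in your existence sketch: in this same regime the orthogonality $T(\cH_k\cH_l)=0$ for the Pizzetti functional cannot be deduced from eigenvalue separation of $\Delta_{LB}$, since $\cH_k$ and $\cH_{2-M-k}$ share the eigenvalue $-k(M-2+k)$, and invoking \cite{DBE1, MR2539324} at that point is circular if the goal is a self-contained proof.
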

In the purely bosonic case, the third condition is not necessary for uniqueness, contrary to the full superspace case.

The Berezin integral is the translation invariant linear functional on the Grassmann algebra,
\begin{eqnarray*}
\int_B&=&\pi^{-n} \partial_{{x \grave{}}_{2n}} \ldots \partial_{{x \grave{}}_{1}}.
\end{eqnarray*}

The orthosymplectic superalgebra $\mathfrak{osp}(m|2n)$ can be generated by the following differential operators on $\mR^{m|2n}$:
\begin{eqnarray}
\label{ospgen}
L_{ij}&=&X_i\partial_{X^j}-(-1)^{[i][j]}X_j\partial_{X^i}
\end{eqnarray}
for $1\le i \le j \le m+2n$, see \cite{CDBS3, MR2395482}. The super Lie bracket is realized by the graded commutator $[L_{ij},L_{kl}]=L_{ij}L_{kl}-(-1)^{([i]+[j])([k]+[l])}L_{kl}L_{ij}$. The Laplace-Beltrami operator is defined as
\begin{eqnarray}
\label{LB}
\Delta_{LB}&=&R^2\nabla^2-\mE(M-2+\mE).
\end{eqnarray}
This is a Casimir operator of $\mathfrak{osp}(m|2n)$ of degree $2$ and can be expressed as (see \cite{MR0546778})
\begin{eqnarray}
\label{LBosp}
\Delta_{LB}&=&-\frac{1}{2}\sum_{i,j,k,l=1}^{m+2n}L_{ij}g^{il}g^{jk}L_{kl}.
\end{eqnarray} 
This also corresponds to a quadratic Casimir operator for the $\mathfrak{sl}_2$-realization, as can be seen from equation \eqref{LB}.

\section{The orthosymplectic superalgebra as Killing vector fields}
\label{introOSp}

The pair $\left(\sosp,\mathfrak{sl}_2\right)$ on $\mR^{m|2n}$ introduced above does not satisfy the requirements to be an actual Howe dual pair. This can be expressed in the following remarks:
\begin{itemize}
\item \textbf{P1}: The weight vectors $R^{2j}\cH_k$ of $\mathfrak{sl}_2$ in the Fischer decomposition \eqref{superFischer} are not irreducible $\sosp$ representations, see Theorem \ref{decompintoirreps}. As a consequence the polynomials do not have a multiplicity free decomposition into irreducible pieces under the joint action $\mathfrak{sl}_2\times (\sosp)$.
\item \textbf{P2}: The only polynomials invariant under the action of the dual partner of $\mathfrak{sl}_2$ should be generated by the polynomial in the $\mathfrak{sl}_2$-algebra realization, $R^2$. However, all the elements of the commutative algebra generated by $r^2$ and $\theta^2$ are $\sosp$-invariant.
\item \textbf{P3}: The supersphere integration is not uniquely determined by the $\sosp$-invariance and the modulo $R^2-1$ property, see Theorem \ref{SSintOxSp}.
\end{itemize}
These problems will be solved by introducing the orthosymplectic superalgebra $\osp$. Since the Laplace operator and $R^2$ commute with the $\osp$-action this will prove that the pair $(\mathfrak{osp}(m|2n),\mathfrak{sl}_2)$ is a true Howe dual pair for harmonic analysis on $\mR^{m|2n}$. Problem \textbf{P2} was solved in Theorem 3 of \cite{CDBS3}, which implies that the only $\osp$-invariant polynomials on $\mR^{m|2n}$ are in fact given by Span$\{R^{2j}|j\in\mN\}$. The solutions to \textbf{P1} and \textbf{P3} are given in Theorem \ref{irrH} and Theorem \ref{Pizuniekosp} respectively. The mathematical motivation to consider the orthosymplectic superalgebra is given in the subsequent Theorem \ref{isomR}.

The orthogonal group is the matrix group which corresponds to the case $n=0$ in Equation \eqref{OxSpdef}. This group can also be characterized as the group of isometries of $\mR^m$ which stabilize the origin, therefore, the Lie algebra of Killing vector fields which stabilize the origin is isomorphic to $\mathfrak{so}(m)$. In \cite{MR2434470} the isometry supergroup of a Riemannian supermanifold was defined. We only use the Lie superalgebra component of this definition.

We consider the flat supermanifold $\mR^{m|2n}$ with space of vector fields $Der \cO(\mR^{m|2n})$ given by the left $\cO(\mR^{m|2n})$-module with basis $\{\partial_{X^1},\cdots,\partial_{X^{m+2n}}\}$. The metric $\langle \cdot,\cdot\rangle:Der \cO(\mR^{m|2n})\times Der \cO(\mR^{m|2n})\to\cO({\mR^{m|2n}})$ is defined by
\begin{eqnarray*}
\langle f \nabla_j |h\nabla^k\rangle =(-1)^{|h|[j]}\delta_j^k fh
\end{eqnarray*}
for $f,h\in\cO({\mR^{m|2n}})$ and the partial derivatives $\nabla_j$ and $\nabla^j$ as defined in Subsection \ref{subprel}. This also implies $\langle \nabla^l|\nabla^k\rangle=g^{kl}$, with $g$ the metric in Equation \eqref{defg}. The Lie superalgebra of graded Killing vector fields is generated by all homogeneous vector fields $F$ such that
\begin{eqnarray}
\label{defKilling}
F\langle Y|Z\rangle&=&\langle [F,Y]|Z\rangle+(-1)^{|F||Y|}\langle Y| [F,Z]\rangle
\end{eqnarray}
holds for all homogeneous vector fields $Y,Z\in Der \cO(\mR^{m|2n})$.

\begin{theorem}
\label{isomR}
The Killing vector fields on the Riemannian superspace $\mR^{m|2n}$ are given by the action of $\mathfrak{osp}(m|2n)$ on $\mR^{m|2n}$ in Equation \eqref{ospgen} and the partial derivatives $\partial_{X^j}$. The algebra of Killing vector fields which stabilize the origin is therefore $\osp$.
\end{theorem}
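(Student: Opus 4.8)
The plan is to determine directly all homogeneous $F\in Der\,\cO(\mR^{m|2n})$ satisfying the Killing equation \eqref{defKilling}, by writing $F=\sum_{j}F^{j}\nabla_{j}$ with coefficients $F^{j}\in\cO(\mR^{m|2n})$ of parity $|F^{j}|=|F|+[j]$, and then isolating those that vanish at the origin. First I would reduce the problem to testing \eqref{defKilling} only on the constant coordinate fields $Y=\nabla^{a}$, $Z=\nabla^{b}$ for $1\le a,b\le m+2n$. This reduction is legitimate because the pairing $\langle\cdot|\cdot\rangle$ is graded $\cO$-bilinear and $F$ is a graded derivation, so both sides of \eqref{defKilling} obey the same Leibniz rule when $Y$ and $Z$ are multiplied by superfunctions; hence validity on the generating set $\{\nabla^{a}\}$ of the left $\cO$-module $Der\,\cO(\mR^{m|2n})$ propagates to arbitrary $Y,Z$. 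For these constant fields the left-hand side $F\langle\nabla^{a}|\nabla^{b}\rangle=F(g^{ab})$ vanishes, so \eqref{defKilling} collapses to a single relation between the two bracket terms.

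Next I would compute $[F,\nabla^{a}]$. Since $\nabla^{a}=\sum_{l}g^{la}\nabla_{l}$ has constant coefficients, the graded commutator with $F$ annihilates all second-order contributions and one finds $[\nabla^{a},F]=\sum_{j}(\nabla^{a}F^{j})\nabla_{j}$, a genuine first-order vector field. Substituting this together with the defining formula $\langle f\nabla_{j}|h\nabla^{k}\rangle=(-1)^{|h|[j]}\delta^{k}_{j}fh$ into the reduced equation, and keeping careful track of the parity signs produced when moving $F$ past $\nabla^{a}$, the Killing condition turns into a graded symmetry relation of the schematic form
\[
\nabla^{a}F^{b}+(-1)^{[a][b]}\nabla^{b}F^{a}=0
\]
for all $a,b$, which is the super-analogue of the classical Killing equation $\partial_{a}F_{b}+\partial_{b}F_{a}=0$.

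From here I would mimic the classical argument that Killing fields are affine: applying $\nabla^{c}$ to the displayed relation and combining its three cyclic permutations in $a,b,c$ — now carrying the appropriate graded signs and using that the $\nabla^{a}$ graded-commute — isolates each second derivative $\nabla^{c}\nabla^{a}F^{b}$ and forces it to vanish. Thus every coefficient is affine, $F^{j}=\sum_{i}A^{j}{}_{i}X_{i}+b^{j}$. Feeding this back into the graded Killing equation, the constants $b^{j}$ are unconstrained, while the matrix $A$ must be graded-antisymmetric with respect to $g$, which is exactly the defining condition of $\osp$. The resulting linear fields $\sum_{i,j}A^{j}{}_{i}X_{i}\nabla_{j}$ are precisely the span of the operators $L_{ij}$ of \eqref{ospgen}, and the constant fields are the translations $\partial_{X^{j}}$. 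The reverse inclusion is immediate, since the $L_{ij}$ preserve $g$ by construction and the translations act trivially on the constant metric, so both are Killing. Finally, $F$ stabilizes the origin if and only if $F^{j}(0)=b^{j}=0$, which deletes the translation part and leaves exactly $\osp$.

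The genuine difficulty throughout is the graded sign bookkeeping: obtaining the correct sign in the super Killing equation, and then arranging the three-term cyclic combination so that the second derivatives truly cancel rather than recombine into a nonzero expression. A secondary point that deserves explicit care is the justification, via $\cO$-bilinearity of $\langle\cdot|\cdot\rangle$ and the derivation property of $F$, that imposing \eqref{defKilling} on the coordinate basis $\{\nabla^{a}\}$ already forces it for all vector fields.
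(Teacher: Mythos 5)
Your proposal is correct and follows essentially the same route as the paper: testing \eqref{defKilling} on the coordinate fields $\nabla^j,\nabla^k$, deriving the graded Killing equation (the paper's version carries an extra sign $(-1)^{([j]+[k])|F|}$ that your ``schematic'' display absorbs), killing second derivatives by the three-fold cyclic permutation, and reading off translations from the constant part and the $\osp$-condition $F^{jk}+(-1)^{[j][k]}F^{kj}=0$ from the linear part. The only cosmetic difference is that you justify restricting to the basis fields a priori via $\cO$-bilinearity and the Leibniz rule, whereas the paper derives necessary conditions first and verifies sufficiency for general $Y,Z$ at the end; both settle the same point.
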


\begin{proof}
We consider condition \eqref{defKilling} for the Lie superalgebra in case $Y=\nabla^j$ and $Z=\nabla^k$ with $j,k=1,\cdots,m+2n$,
\begin{eqnarray*}
F\langle \nabla^j|\nabla^k\rangle=0=\langle [F,\nabla^j]|\nabla^k\rangle+(-1)^{|F|[j]}\langle\nabla^j| [F,\nabla^k]\rangle.
\end{eqnarray*}
For a homogeneous vector field $F=\sum_{l=1}^{m+2n}F^l\nabla_l$ of degree $|F|$, the relation $[F,\nabla^j]=-(-1)^{|F|[j]}\sum_l\nabla^j(F^l)\nabla_l$ shows this condition is equivalent with
\begin{eqnarray*}
\nabla^j(F^k)+(-1)^{([j]+[k])|F|}(-1)^{[j][k]}\nabla^k(F^j)&=&0.
\end{eqnarray*}
This leads to
\begin{eqnarray*}
(\nabla^i\nabla^jF^k)&=&-(-1)^{([j]+[k])|F|}(-1)^{[j][k]+[i][k]}(\nabla^k\nabla^iF^j).
\end{eqnarray*}
Applying this consecutively for $(i,j,k)$, $(k,i,j)$ and $(j,k,i)$ yields $(\nabla^i\nabla^jF^k)=-(\nabla^i\nabla^jF^k)$. This implies that the functions $F^k$ are polynomials of maximal degree $1$. Taking the functions $F^k$ constant leads to the partial derivatives $\partial_{X^j}$. When the functions $F^k$ are elements of the vectorspace $\cP_1=\mC\{X_j|j=1,\cdots,m+2n\}$, the Killing vector field is of the form $F=\sum_{kl}X_kF^{kl}\nabla_l$ for $F^{kl}\in\mC$, where the coefficients satisfy
\begin{eqnarray*}
(-1)^{[j]}F^{jk}+(-1)^{([j]+[k])|F|}(-1)^{[j][k]}(-1)^{[k]}F^{kj}&=&0,
\end{eqnarray*}
which can be reduced to 
\begin{eqnarray*}
F^{jk}+(-1)^{[j][k]}F^{kj}&=&0.
\end{eqnarray*}
These derivatives corresponds to the realization of $\mathfrak{osp}(m|2n)$ in Equation \eqref{ospgen}. It is straightforward to calculate that these $F$ also satisfy the relation $F\langle Y|Z\rangle=\langle [F,Y]|Z\rangle+(-1)^{|F||Y|}\langle Y| [F,Z]\rangle$ for general vector fields $Y$ and $Z$.
\end{proof}

\section{The supersphere integral}

The object of the supersphere manifold is related to the equation $R^2=1$ on $\mR^{m|2n}$. In this paper we will not need the supersphere manifold explicitly, only the algebra of functions. The algebra of functions on the supersphere is given by $\cO(\mR^{m|2n})/(R^2-1)$ with $(R^2-1)$ the ideal generated by the function $R^2-1$. The supersphere integral therefore has to be a functional on this function space, which is equivalent with a functional on $\cO(\mR^{m|2n})$ such that the ideal $(R^2-1)$ is in the kernel. In the following theorem we prove the uniqueness of the supersphere integration for functions on $\mR^{m|2n}$, thus solving problem \textbf{P3}. This gives a natural generalization of the characterization of the integral over the unit sphere in Subsection \ref{classHarm}. Contrary to Theorem \ref{SSintOxSp} we consider the functional on general smooth functions instead of on polynomials. When restricted to polynomials the integration will be identical to the Pizzetti formula in Theorem \ref{SSintOxSp}.
\begin{theorem}
\label{Pizuniekosp}
When $m\not=0$, the only (up to a multiplicative constant) linear functional $T: \cO(\mR^{m|2n}) \rightarrow \mC$, satisfying the properties
\begin{itemize}
\item $T[R^2 f]=\, T[f]$, 
\item $T$ is $\osp$-invariant,
\end{itemize}
is given by
\begin{eqnarray*}
\int_{\mS^{m-1|2n}}\cdot&=&\int_{\mS^{m-1}}\int_B \left(1-\theta^2\right)^{\frac{m}{2}-1}\phi^\sharp\cdot,
\end{eqnarray*}
with $\phi^\sharp$ the superalgebra morphism given by $\phi^\sharp:\cC^\infty(\mR^{m})\otimes\Lambda_{2n}\to\cC^\infty(\mR^{m}_0)\otimes\Lambda_{2n}$ (with $\mR^m_0=\mR^m\backslash \{0\}$) as
\begin{eqnarray*}
\phi^\sharp(f(\bold{x}))&=&\sum_{j=0}^n\frac{(-1)^j\theta^{2j}}{j!}\left(\frac{\partial}{\partial r^2}\right)^jf(\bold{x}),
\end{eqnarray*}
with $\partial_{r^2}=\frac{1}{2r^2}\mE_b$.
\end{theorem}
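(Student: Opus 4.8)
The plan is to prove the statement in two movements: first exhibit that the displayed functional $\int_{\mS^{m-1|2n}}$ does satisfy the two listed properties (existence), and then show that these two properties determine a linear functional on $\cO(\mR^{m|2n})$ up to a scalar (uniqueness). For the uniqueness half I would work directly with the quotient of $\cO(\mR^{m|2n})$ by the subspace spanned by all $\osp$-coboundaries $L_{ij}f$ together with $(R^2-1)\cO(\mR^{m|2n})$: since any admissible $T$ annihilates every $L_{ij}f$ (Lie-algebra invariance) and every $(R^2-1)f$ (the first property), it factors through this quotient, and the whole theorem reduces to showing that this quotient is one-dimensional, spanned by the class of $1$.

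For existence, the first property is the cleanest point. Because $\phi^\sharp$ is an algebra morphism and $\partial_{r^2}r^2=1$, $\partial_{r^2}\theta^2=0$, one computes $\phi^\sharp(R^2)=\phi^\sharp(r^2)+\phi^\sharp(\theta^2)=(r^2-\theta^2)+\theta^2=r^2$. Hence $\phi^\sharp(R^2f)=r^2\,\phi^\sharp(f)$, and pulling the bosonic scalar $r^2$ through the Berezin integral reduces $\int_{\mS^{m-1|2n}}R^2f$ to $\int_{\mS^{m-1}}r^2(\cdots)$, which equals $\int_{\mS^{m-1}}(\cdots)$ by the classical modulo-$(r^2-1)$ property of the unit-sphere integral recalled in Subsection \ref{classHarm}; this gives $T[R^2f]=T[f]$. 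The even part of the invariance, i.e.\ invariance under $\sosp$, follows from the classical $SO(m)$-invariance of $\int_{\mS^{m-1}}$ and the $Sp(2n)$-invariance of $\int_B$ (the relevant Berezinian being $1$), once one checks that $\phi^\sharp$ commutes with the even action — which it does, since those generators preserve $r^2$ and $\theta^2$ and commute with $\partial_{r^2}=\tfrac{1}{2r^2}\mE_b$. The genuinely new input is invariance under the odd generators $L_{i,m+k}$ of $\osp$, which mix a bosonic and a fermionic variable; I would verify $\int_{\mS^{m-1|2n}}L_{i,m+k}f=0$ by a direct calculation, expanding $L_{i,m+k}f$, applying $\phi^\sharp$ and then integrating by parts in both $\int_{\mS^{m-1}}$ and $\int_B$, the weight $(1-\theta^2)^{m/2-1}$ being precisely engineered so that the two boundary-type contributions cancel.

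For uniqueness I would reduce in three steps. First, Lie-algebra invariance upgrades to invariance under the connected group $SO(m)\times Sp(2n)$, since $\tfrac{d}{dt}T(\exp(tX)f)=T(X\exp(tX)f)=0$; hence $T(f)=T(\bar f)$, where $\bar f$ is the $SO(m)\times Sp(2n)$-average, an $\sosp$-invariant function $F=F(r^2,\theta^2)=\sum_{j=0}^n\phi_j(r^2)\theta^{2j}$. Second — and this is the heart of the matter — I would use the odd generators to show that $F$ is congruent modulo $\osp$-coboundaries to a function $G(R^2)$ of the single even invariant $R^2=r^2+\theta^2$: applying odd generators to suitable odd functions yields relations among the coefficients $\phi_j$ that collapse the $n+1$ independent profiles of a general $F$ onto the single profile $\phi_j=G^{(j)}/j!$ characterising a function of $R^2$. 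Third, writing $G(R^2)-G(1)=(R^2-1)H(R^2)$ by Hadamard's lemma and invoking the first property gives $T(G(R^2))=G(1)\,T(1)$, so $T$ is fixed by the single number $T(1)$, which existence shows to be nonzero.

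The main obstacle, in both halves, is the action of the odd part of $\osp$: in the existence half it is the explicit cancellation showing the odd generators annihilate the functional, and in the uniqueness half it is the assertion that the $\osp$-coinvariants of $\cO(\mR^{m|2n})$ are exactly the functions of $R^2$ — equivalently, that the extra fermionic symmetries rigidify the two radial variables $r^2$ and $\theta^2$ into the single combination $R^2$. I would expect the Casimir identity $\Delta_{LB}=-\tfrac12\sum L_{ij}g^{il}g^{jk}L_{kl}$, which forces $T(\Delta_{LB}f)=0$ and hence, using the first property and \eqref{LB}, the relation $T(\nabla^2f)=T(\mE(M-2+\mE)f)$, to be a convenient organising tool for producing these coefficient relations degree by degree; the hypothesis $m\neq0$ enters to keep the bosonic radial reduction and the factor $(1-\theta^2)^{m/2-1}$ meaningful.
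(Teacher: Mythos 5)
Your opening reformulation is correct (the theorem is exactly the statement that $\cO(\mR^{m|2n})$ modulo $\mathrm{span}\{L_{ij}f\}+(R^2-1)\cO(\mR^{m|2n})$ is one-dimensional), and parts of your existence half are sound and match the paper: $\phi^\sharp(R^2)=r^2$ combined with the modulo-$(r^2-1)$ property of $\int_{\mS^{m-1}}$ gives the first bullet, and $\phi^\sharp$ does commute with the even generators. But both computations you defer are precisely the mathematical content of the paper's proof, not routine verifications. The paper first proves the conjugation relations of Lemma \ref{OO}, which convert $\phi^\sharp\circ L_{1,m+2}$ into fermionic derivatives and $\mathfrak{so}(m)$-generators composed with $\phi^\sharp$; integrating by parts in $\int_B$ and $\int_{\mS^{m-1}}$ then turns odd invariance into the first-order equation $\partial_{{x\grave{}}_1}\bigl(\alpha(\theta^2)\sqrt{1-\theta^2}\bigr)=\frac{m-1}{2}\,{x\grave{}}_2\,(1-\theta^2)^{-1/2}\alpha(\theta^2)$, whose solution $\alpha(\theta^2)=(1-\theta^2)^{m/2-1}$ is unique up to a scalar. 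This one computation simultaneously handles what you call the "main obstacle" in both halves: it proves the displayed functional is annihilated by the odd generators \emph{and} it pins the weight inside the $(n+1)$-parameter family $\int_{\mS^{m-1}}\int_B\alpha(\theta^2)\phi^\sharp$ permitted by even invariance. Your proposal only asserts that the cancellations occur ("precisely engineered", "relations that collapse the profiles"), so the core of the theorem is missing. Note also that the paper's other key device, which you have no substitute for, is the invertibility of $\phi^\sharp$: it shows $T[f]$ depends only on $[\phi^\sharp(f)]_{r=1}$, reducing the whole problem to the unit sphere \emph{before} any representation theory is used; this is what makes the argument work on all of $\cO(\mR^{m|2n})$ rather than on polynomials.

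More seriously, your uniqueness endgame is demonstrably wrong in the superdimensions $M=m-2n\in-2\mN$, which the theorem covers. There $T(1)=\int_{\mS^{m-1|2n}}1=\frac{2\pi^{M/2}}{\Gamma(M/2)}=0$ (equivalently, $\int_B(1-\theta^2)^{m/2-1}$ carries the factor $\binom{m/2-1}{n}=0$), while the functional itself is nonzero, since the Pizzetti terms of order $k>-M/2$ survive. Your chain $T(f)=T(F)=T(G(R^2))=G(1)\,T(1)$ would then force $T\equiv0$, a contradiction; so the heart of your plan — that odd invariance collapses every invariant $F(r^2,\theta^2)$ to a smooth function $G(R^2)$ modulo coboundaries \emph{alone}, prior to using $T[R^2f]=T[f]$ — cannot be true in general. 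Indeed, the relations the odd generators actually produce (e.g.\ averaging $L_{1,m+1}\bigl(\psi(r^2)x_1{x\grave{}}_2\bigr)$ yields $\frac{2}{m}r^2\psi+\frac{\theta^2}{n}\bigl(\psi+\frac{2}{m}r^2\psi'\bigr)\equiv0$ modulo coboundaries) trade a $\theta^{2j}$-profile for a $\theta^{2(j-1)}$-profile with derivatives mixed in; they reduce $F$ to a single radial profile, not to a function of $R^2$, and since $r^2-1=(R^2-1)-\theta^2$ re-injects $\theta^2$-terms, the coboundary relations and the $(R^2-1)$-reduction must be interleaved — which is exactly why the paper quotients by $(R^2-1)$ first. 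Finally, your averaging step and the "degree by degree" Casimir recursion silently assume continuity of $T$: differentiating under $T$ in $\frac{d}{dt}T(\exp(tX)f)$ and interchanging $T$ with the group integral are not available for a bare linear functional, and a functional on $\cO(\mR^{m|2n})$ is not determined by its values on polynomials. This is repairable without continuity (on the sphere every zero-mean smooth function is a finite sum of $\mathfrak{so}(m)$-coboundaries, by solving $\Delta_{LB,b}h=g$ and using \eqref{LBosp}), but some such argument has to be supplied, since infinitesimal invariance is the only hypothesis.
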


\begin{proof}
If $[\phi^\sharp(f)]_{r=1}=0$ then $\phi^\sharp(f)=(r^2-1)h$ for some $h\in \cO(\mR^{m|2n}_0)$. The morphism $\phi^\sharp$ is invertible, $(\phi^\sharp)^{-1}=\sum_{j=0}^n\frac{\theta^{2j}}{j!}(\partial_{r^2})^j$. Therefore 
\[f=\left(\left(\phi^\sharp\right)^{-1}(r^2-1)\right)\left(\left(\phi^\sharp\right)^{-1}(h)\right)=(R^2-1)\left(\left(\phi^\sharp\right)^{-1}(h)\right)\]
and $T[f]=0$. This implies that $T[f]$ only depends on $\left[\phi^\sharp (f)\right]_{r=1}$. 

Since $T$ is also $\mathfrak{so}(m)\oplus\mathfrak{sp}(2n)$-invariant $T[f]$ must be of the form
\begin{eqnarray*}
T[f]&=&\int_{\mS^{m-1}}\int_B \alpha(\theta^2)\phi^\sharp(f),
\end{eqnarray*}
for $\alpha(\theta^2)$ some polynomial in $\theta^2$, since $\int_B\theta^{2i}\cdot$ for $i=1,\cdots,n$ are the only $\mathfrak{sp}(2n)$-invariant linear functionals on $\Lambda_{2n}$.
Now we demand that
\begin{eqnarray*}
T\circ L_{ij}&=&0
\end{eqnarray*}
holds for $L_{ij}$ the odd $\osp$-generators in equation \eqref{ospgen}. The subsequent Lemma \ref{OO} implies
\begin{eqnarray*}
& &\phi^\sharp \circ L_{i,m+j}\\
&=&\left[x_i\sqrt{1-\frac{\theta^2}{r^2}}\partial_{{x\grave{}}^j} +2x_i\sqrt{1-\frac{\theta^2}{r^2}}\frac{{x\grave{}}_j}{2r}\partial_{r}\ -{x\grave{}}_j\frac{1}{\sqrt{1-\frac{\theta^2}{r^2}}}\partial_{x_i} +{x\grave{}}_j\frac{x_i\theta^2}{r^3\sqrt{1-\frac{\theta^2}{r^2}}}\partial_{r}\right]\circ\phi^\sharp \\
&=&x_i\sqrt{1-\frac{\theta^2}{r^2}}\partial_{{x\grave{}}^j}\circ\phi^\sharp -{x\grave{}}_j\frac{1}{\sqrt{1-\frac{\theta^2}{r^2}}}\partial_{x_i}\circ\phi^\sharp +\frac{{x\grave{}}_jx_i}{r\sqrt{1-\frac{\theta^2}{r^2}}}\partial_{r}\circ\phi^\sharp \\
&=&x_i\sqrt{1-\frac{\theta^2}{r^2}}\partial_{{x\grave{}}^j}\circ\phi^\sharp -\frac{{x\grave{}}_j}{r^2\sqrt{1-\frac{\theta^2}{r^2}}}\sum_{k=2}^mx_kL_{ki}\circ\phi^\sharp.
\end{eqnarray*}
Applying this for $L_{1,m+2}=-2x_1\partial_{{x\grave{}}_1}-{x\grave{}}_2\partial_{x_1}$ in the obtained expression for $T$ and using $\int_B\partial_{{x\grave{}}_1} =0$ yields
\begin{eqnarray*}
& &\int_{\mS^{m-1}}\int_B \alpha(\theta^2)\phi^\sharp L_{1,m+2}\cdot\\
&=&-\int_{\mS^{m-1}}x_1\int_B \alpha(\theta^2)\left[2\sqrt{1-\frac{\theta^2}{r^2}}\partial_{{x\grave{}}_1} +(m-1)\frac{{x\grave{}}_2}{r^2\sqrt{1-\frac{\theta^2}{r^2}}}\right]\phi^\sharp\cdot \\
&=&\int_{\mS^{m-1}}x_1\int_B \left[\left(\partial_{{x\grave{}}_1}\alpha(\theta^2)2\sqrt{1-\frac{\theta^2}{r^2}}\right)-(m-1)\frac{{x\grave{}}_2\alpha(\theta^2)}{r^2\sqrt{1-\frac{\theta^2}{r^2}}}\right]\phi^\sharp\cdot.
\end{eqnarray*}
Since $\phi^\sharp$ is invertible, the relation
\begin{eqnarray*}
\partial_{{x\grave{}}_1}\left(\alpha(\theta^2)\sqrt{1-\theta^2}\right)&=&\frac{m-1}{2}\frac{{x\grave{}}_2}{\sqrt{1-\theta^2}}\alpha(\theta^2)
\end{eqnarray*}
must hold in order for the operator above to be identically zero. A straightforward calculation shows that this uniquely determines $\alpha(\theta^2)$ to be $(1-\theta^2)^{\frac{m}{2}-1}$ up to a multiplicative constant. It is clear that this $T$ will also be zero when composed with the other $L_{ij}\in\mathfrak{osp}(m|2n)_1$.
\end{proof}

When the supersphere integral is restricted to polynomials, we re-obtain the Pizzetti formula from Theorem \ref{SSintOxSp}, as has been proved in \cite{MR2539324}.

Now we derive the properties of the morphism $\phi^\sharp$ used in the proof of Theorem \ref{Pizuniekosp}.
\begin{lemma}
\label{OO}
The morphism $\phi^\sharp:\cC^\infty(\mR^m)\otimes\Lambda_{2n}\to\cC^\infty(\mR^m_0)\otimes\Lambda_{2n}$ behaves with respect to the coordinates and derivatives as
\begin{eqnarray*}
\phi^\sharp x_j=x_j\sqrt{1-\frac{\theta^2}{r^2}}\phi^\sharp,& &\phi^\sharp{x\grave{}}_j={x\grave{}}_j\phi^\sharp\\
\phi^\sharp\partial_{x_j}=\frac{1}{\sqrt{1-\frac{\theta^2}{r^2}}}\partial_{x_j}\phi^\sharp-\frac{x_j\theta^2}{r^3\sqrt{1-\frac{\theta^2}{r^2}}}\partial_{r}\phi^\sharp,& &\phi^\sharp\partial_{{x\grave{}}_{j}}=\partial_{{x\grave{}}_{j}}\phi^\sharp+\frac{\sum_{i=1}^{2n}J_{ji}{x\grave{}}_{i}}{r}\partial_{r}\phi^\sharp,
\end{eqnarray*}
with $J_{ij}$ given in equation \eqref{Jmatrix}.
\end{lemma}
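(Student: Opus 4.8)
The plan is to recognise $\phi^\sharp$ as the exponential of a derivation and then to reduce every assertion to the action on the coordinate generators. First I would observe that, since $\partial_{r^2}=\frac{1}{2r^2}\mE_b$ differentiates only the bosonic variables, it commutes with multiplication by the even element $\theta^2$; hence the defining sum collapses to $\phi^\sharp=\exp(-\theta^2\partial_{r^2})$, a finite expansion because $\theta^2$ is nilpotent with $\theta^{2(n+1)}=0$. Moreover $\partial_{r^2}$ is an even derivation of $\cO(\mR^{m|2n}_0)$ (the product of the function $\frac{1}{2r^2}$ with the even derivation $\mE_b$), so $-\theta^2\partial_{r^2}$ is an even derivation and $\phi^\sharp$ is a grading-preserving algebra automorphism with inverse $(\phi^\sharp)^{-1}=\exp(\theta^2\partial_{r^2})$. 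This structural fact is what makes the lemma manageable.

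Next I would establish the two coordinate relations, which are really statements about $\phi^\sharp$ as a morphism. Applying the operator directly, $\partial_{r^2}$ acts on $x_j r^{-2l}$ by a scalar dictated by homogeneity, so $(\partial_{r^2})^l x_j$ is a constant multiple of $x_j r^{-2l}$; the resulting coefficients are exactly the binomial coefficients of $(1-\theta^2/r^2)^{1/2}$, giving $\phi^\sharp(x_j)=x_j\sqrt{1-\theta^2/r^2}$. Since $\mE_b{x\grave{}}_j=0$, only the $l=0$ term survives and $\phi^\sharp({x\grave{}}_j)={x\grave{}}_j$. The two displayed coordinate identities then follow from multiplicativity, e.g.\ $\phi^\sharp(x_j f)=\phi^\sharp(x_j)\phi^\sharp(f)$. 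At the same time I would record the consequence $\phi^\sharp\big(g(r^2,\theta^2)\big)=g(r^2-\theta^2,\theta^2)$, in particular $\phi^\sharp(r^2)=r^2-\theta^2$ and hence $s^2=(r^2-\theta^2)/r^2$ for $s=\sqrt{1-\theta^2/r^2}$; these will be the workhorses of the last step.

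For the two derivative relations I would rewrite each as a conjugation. Pushing the trailing $\phi^\sharp$ to the right, the bosonic identity is equivalent to $\phi^\sharp\partial_{x_j}(\phi^\sharp)^{-1}=\frac{1}{s}\partial_{x_j}-\frac{x_j\theta^2}{r^3 s}\partial_r$ and the fermionic one to $\phi^\sharp\partial_{{x\grave{}}_j}(\phi^\sharp)^{-1}=\partial_{{x\grave{}}_j}+\frac{\sum_i J_{ji}{x\grave{}}_i}{r}\partial_r$. Both left-hand sides are derivations (a derivation conjugated by an automorphism), and both right-hand sides are manifestly derivations of the correct parity, so it suffices to check that the two sides agree on each generator $x_a$ and ${x\grave{}}_a$. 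Using $(\phi^\sharp)^{-1}(x_a)=x_a\sqrt{1+\theta^2/r^2}$, the chain rule, and $\partial_r x_a=x_a/r$, this is a short computation; the only delicate point is the fermionic case, where one must compute $\partial_{{x\grave{}}_j}\theta^2=2\sum_i J_{ji}{x\grave{}}_i$ with the Grassmann sign convention, and this is precisely where the matrix $J$ of \eqref{Jmatrix} enters.

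The main obstacle is the algebraic collapse in the bosonic relation: after applying $\phi^\sharp$ to $\partial_{x_j}\big[(\phi^\sharp)^{-1}x_a\big]$ one is left with a product of several factors $s$, $\theta^2$ and powers of $r^2-\theta^2$, and matching it against $\frac1s\delta_{ja}-\frac{x_jx_a\theta^2}{r^4 s}$ hinges on the identity $s^2 r^2=r^2-\theta^2$ derived above. Reducing everything to derivations that need only be tested on the finitely many generators is what prevents this from becoming an unwieldy operator manipulation, and once the coordinate actions coincide, equality of the full operators is automatic.
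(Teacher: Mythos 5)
Your proposal is correct, and it reaches the lemma by a genuinely different mechanism than the paper. The paper obtains the coordinate identities from multiplicativity just as you do, but for the bosonic derivative identity it writes $\partial_{x_j}=\frac{x_j}{r}\partial_r+\sum_{l=1}^m\frac{x_l}{r^2}L_{lj}$ and exploits that $\phi^\sharp$ commutes with $\partial_{r^2}$ and with the rotation generators $L_{lj}$, $1\le l,j\le m$, so that only the radial term acquires a correction; the fermionic identity is then dismissed as ``a straightforward calculation''. You instead note that $\theta^2$ commutes with $\partial_{r^2}$, so $\phi^\sharp=\exp(-\theta^2\partial_{r^2})$ is the exponential of an even nilpotent derivation, hence a parity-preserving automorphism of $\cC^\infty(\mR^m_0)\otimes\Lambda_{2n}$ with inverse $\exp(\theta^2\partial_{r^2})$, and you verify the conjugates $\phi^\sharp\partial(\phi^\sharp)^{-1}$ against the claimed right-hand sides on the generators alone. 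This buys uniformity: both derivative identities, including the fermionic one the paper leaves implicit, reduce to the same short check on $x_a$ and ${x\grave{}}_a$, with all sign bookkeeping isolated in $\partial_{{x\grave{}}_j}\theta^2=2\sum_iJ_{ji}{x\grave{}}_i$, which is correct for the normalization \eqref{Jmatrix}; I have also verified your key collapse, namely that $\phi^\sharp\bigl(g(r^2,\theta^2)\bigr)=g(r^2-\theta^2,\theta^2)$ and $s^2r^2=r^2-\theta^2$ turn $\phi^\sharp\partial_{x_j}\bigl[x_a\sqrt{1+\theta^2/r^2}\,\bigr]$ into $\frac{\delta_{ja}}{s}-\frac{x_jx_a\theta^2}{r^4s}$, exactly the paper's coefficients. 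The one step you should make explicit is why agreement on generators forces equality of the operators, since $\cC^\infty(\mR^m_0)\otimes\Lambda_{2n}$ is not algebraically generated by the coordinates: observe that $\phi^\sharp\partial(\phi^\sharp)^{-1}$ is manifestly a differential operator (a finite composition of the finite sums defining $\phi^\sharp$ and its inverse), so as a derivation it is a vector field, i.e.\ an element of the free module on $\partial_{x_i},\partial_{{x\grave{}}_j}$, and such operators are determined by their values on the coordinate functions; since restriction $\cC^\infty(\mR^m)\to\cC^\infty(\mR^m_0)$ is injective, proving the identities as operators on $\cC^\infty(\mR^m_0)\otimes\Lambda_{2n}$ gives the lemma as stated. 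With that remark added, your argument is complete.
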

\begin{proof}
Since $\phi^\sharp$ is a superalgebra morphism, $\phi^\sharp( X_j f(\bold{x}))=\phi^\sharp (X_j)\phi^\sharp(f(\bold{x}))$ holds. The calculation of the expressions $\phi^\sharp (x_j)$ and $\phi^\sharp ({x\grave{}}_j)$ are straightforward. To calculate the third property we use the fact that $\partial_{r^2}$ and $L_{ij}$ for $1\le i,j\le m$ commute with $\phi^\sharp$ and the first property in the lemma, leading to
\begin{eqnarray*}
\partial_{x_j}\phi^\sharp&=&\left[\frac{x_j}{r}\partial_r+\sum_{l=1}^m\frac{x_l}{r^2}L_{lj}\right]\phi^\sharp\\
&=&\frac{x_j}{r}\partial_r\phi^\sharp+\sum_{l=1}^m\sqrt{1-\frac{\theta^2}{r^2}}\,\phi^\sharp \,\frac{x_l}{r^2}L_{lj}\\
&=&\frac{x_j}{r}\partial_r\phi^\sharp+\sqrt{1-\frac{\theta^2}{r^2}}\phi^\sharp \,\left[\partial_{x_j}-\frac{x_j}{r}\partial_r\right]\\
&=&\frac{x_j}{r}\left(1-(1-\frac{\theta^2}{r^2})\right)\partial_r\phi^\sharp+\sqrt{1-\frac{\theta^2}{r^2}}\phi^\sharp \,\partial_{x_j}.\\
\end{eqnarray*}
The last property follows from a straightforward calculation.
\end{proof}

\section{Polynomials on the supersphere}
\label{SectionPolySS}

\subsection{Polynomials on super Euclidean space and on the supersphere}

The polynomials on $\mR^{m|2n}$ form a $\mathfrak{gl}(m|2n)$-module, the action of $\mathfrak{gl}(m|2n)$ is given by the differential operators $X_i\partial_{X_j}$ for $i,j=1\cdots,m+2n$. The decomposition $\cP=\bigoplus_{k=0}^\infty\cP_k$ is the decomposition of the polynomials on $\mR^{m|2n}$ into irreducible pieces under the action of $\mathfrak{gl}(m|2n)$, where $\cP_k$ is the representation with highest weight $(k,0,\cdots,0)$. The $m|2n$-dimensional super vectorspace $V\cong\cP_1$ is also the natural representation space for $\mathfrak{gl}(m|2n)$, in fact $\mathfrak{gl}(m|2n)\cong$ End$(V)$. Thus we obtain that $S(V)_k= \odot^k V$ is an irreducible $\mathfrak{gl}(m|2n)$-representation.

In what follows we prove the corresponding results for $\osp$. So on the one hand we decompose $\cP_k\cong S(V)_k= \odot^k V$ into irreducible $\osp$-representations and on the other hand we study the polynomials on the supersphere $\mS^{m-1|2n}$ as an $\mathfrak{osp}(m|2n)$-representation. If $m-2n\not\in-2\mN$, the polynomials on the supersphere $\mS^{m-1|2n}$ can be identified with the harmonic polynomials on $\mR^{m|2n}$. This immediately follows from the Fischer decomposition \eqref{superFischer} and the fact that the functions on the supersphere correspond to $\cO(\mR^{m|2n})/(R^2-1)$, with $(R^2-1)$ the ideal generated by the function $R^2-1$.

The action of $\mathfrak{osp}(m|2n)$ on polynomials is given by the expressions in Equation \eqref{ospgen}. Since the $L_{ij}$ commute with $\mE$ and $\nabla^2$, the spaces $\cH_k$ are $\mathfrak{osp}(m|2n)$-representations. In Appendix A of \cite{MR2395482} the irreducibility of $\cH_k$ as an $\mathfrak{osp}(m|2n)$-representation was proved for $M=m-2n>1$. Some very specific examples were also proved in Proposition $3.1$ in \cite{MR2441598}. In the next section we will generalize this result to all $M\not\in-2\mN$, which implies that 
\[\cH=\bigoplus_{k=0}^\infty\cH_k\] is the decomposition of $\cH$ (or the polynomials on the supersphere) into irreducible blocks under the action of $\mathfrak{osp}(m|2n)$. This will also solve problem $\textbf{P1}$ and complete the interpretation of the Fischer decomposition \eqref{superFischer}. We will also consider the case $M\in-2\mN$ which leads to unexpected and interesting results.

The supersphere integration in Theorem \ref{Pizuniekosp} or Theorem \ref{SSintOxSp} also generates a superhermitian form $\langle\cdot|\cdot\rangle_{\mS^{m-1|2n}}$ on each space $\cH_k$, (or on $\cH$), by defining
\begin{eqnarray*}
\langle f|g\rangle_{\mS^{m-1|2n}}&=&\int_{\mS^{m-1|2n}}f\overline{g}.
\end{eqnarray*}
This implies $\langle f|g\rangle_{\mS^{m-1|2n}}=(-1)^{|f||g|}\overline{\langle g|f\rangle}_{\mS^{m-1|2n}}$, for $f$ and $g$ homogeneous.

The representation of $\mathfrak{osp}(m|2n)$ on $\cH$ is an orthosymplectic representation (see definition in \cite{MR0922822}) with respect to this hermitian form. For $f,g\in\cH_k$ with $f$ homogeneous,
\begin{eqnarray*}
\langle L_{ij}f|g\rangle_{\mS^{m-1|2n}}&=&\int_{\mS^{m-1|2n}}L_{ij}\left(f\overline{g}\right)-(-1)^{([i]+[j])|f|}\int_{\mS^{m-1|2n}}fL_{ij}\overline{g}\\
&=&-(-1)^{([i]+[j])|f|}\langle f|L_{ij}g\rangle_{\mS^{m-1|2n}}.
\end{eqnarray*} 

\begin{remark}
All the results on representations of the orthosymplectic Lie superalgebra in this paper can be extended to the orthosymplectic Lie supergroup. This is done most elegantly in the approach using supergroup pairs, the Lie supergroup ${OSp}(m|2n)$ then corresponds to the supergroup pair $(O(m)\times Sp(2n),\osp)$. More information on the definition of representations of such supergroup pairs is given in \cite{MR2207328}. The Lie supergroup ${OSp}(m|2n)$ can be introduced as the supergroup of isometries of $\mR^{m|2n}$ that stabilize the origin, according to the definition in \cite{MR2434470}.
\end{remark}

\subsection{The $\cH_k$-representations}

The main result of this subsection is given in the following theorem.
\begin{theorem}
\label{irrH}
When $M=m-2n\not\in-2\mN$, the space $\cH_k$ of spherical harmonics on $\mR^{m|2n}$ of homogeneous degree $k$ is an irreducible $\mathfrak{osp}(m|2n)$-module. When $M\in-2\mN$, $\cH_k$ is irreducible if and only if 
\begin{eqnarray*}
k>2-M &\mbox{or}& k< 2-\frac{M}{2}.
\end{eqnarray*}
The representation $\cH_k$ is always indecomposable.
\end{theorem}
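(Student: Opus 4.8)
The plan is to leverage the $\mathfrak{so}(m)\oplus\mathfrak{sp}(2n)$-decomposition of Theorem~\ref{decompintoirreps}, which writes $\cH_k$ as a sum of summands $f_{l,p,q}\,\cH^b_p\otimes\cH^f_q$ with $p=k-2l-j$ and $q=j$. Each such summand is an external tensor product of an irreducible bosonic and an irreducible fermionic spherical-harmonic space, and distinct summands carry distinct $\mathfrak{so}(m)\oplus\mathfrak{sp}(2n)$-highest weights indexed by $(p,q)$; hence this even decomposition is multiplicity free. Consequently any $\osp$-submodule $W\subseteq\cH_k$, being in particular stable under the even subalgebra, is forced to equal the direct sum of a subcollection of these summands, the projectors $\mQ^k_{r,s}$ of~\eqref{projpiecesSphHarm} realising the corresponding coordinate maps. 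The whole question of (ir)reducibility is thereby reduced to a connectivity problem: which summands can be reached from which under the action of the odd part $\osp_1$.

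The key computation is then the action of the odd generators $L_{i,m+j}$ of~\eqref{ospgen} between summands. As an $\mathfrak{so}(m)\oplus\mathfrak{sp}(2n)$-module one has $\osp_1\cong\mC^m\otimes\mC^{2n}$, so applying $\osp_1$ to $\cH^b_p\otimes\cH^f_q$ and reprojecting into $\cH_k$ with the operators $\mQ^k_{r,s}$ can only produce the neighbouring summands obtained by $p\mapsto p\pm1$, $q\mapsto q\pm1$, the value $l$ being then fixed by $k=2l+p+q$. For each such arrow I would extract the scalar transition coefficient explicitly from the closed form of $f_{l,p,q}$ in Lemma~\ref{polythm}; I expect these coefficients to factor into a bosonic and a fermionic part whose product carries factors of the shape $\tfrac{M}{2}+k-1$ up to integer shifts by the local indices $p,q,l$. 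Doing this bookkeeping correctly — tracking the harmonic reprojection and the precise shifts so as to see exactly when a factor degenerates — is the technical heart of the argument and the step I expect to be the main obstacle.

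Once the coefficients are known the representation-theoretic conclusions are essentially formal. The purely bosonic summand $\cH^b_k$ (the case $l=q=0$) contains the $\osp$-highest weight vector $v_k$ of weight $(k,0,\dots,0)$. I expect the downward arrows issuing from $v_k$ to be nonzero throughout the grid, so that $\cH_k=U(\osp)\,v_k$ is cyclic on $v_k$; since the $(k,0,\dots,0)$-weight space is one-dimensional, any splitting $\cH_k=A\oplus B$ would place $v_k$ in a single summand and force the other to vanish, giving indecomposability in all cases. For irreducibility one runs the arrows the other way: a nonzero submodule $W$ contains some summand, and provided every upward arrow along some path back to $\cH^b_k$ is nonzero, $W$ must contain $v_k$ and hence equal $\cH_k$. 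When $M\not\in-2\mN$ all transition coefficients are nonzero and irreducibility is immediate. A cross-check against the dimension formula~\eqref{dimHk}, and against the fact (from~\eqref{LB}) that $\Delta_{LB}$ acts on all of $\cH_k$ by the single scalar $-k(M-2+k)$, confirms that no further invariant separates the summands.

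It remains to treat $M\in-2\mN$, where I would locate the upward arrows that vanish. Combined with the range restrictions $0\le j\le\min(n,k)$ and $0\le l\le\min(n-j,\lfloor\frac{k-j}{2}\rfloor)$ of Theorem~\ref{decompintoirreps}, the degeneracy of the relevant factor should carve out exactly the window $2-\tfrac{M}{2}\le k\le 2-M$: below it the offending summands do not yet occur for degree $k$, while above it the two-dimensional grid offers an alternative path so that the blocked arrow is not needed to climb back to $v_k$, which yields the stated criterion $k>2-M$ or $k<2-\tfrac{M}{2}$. Inside the window the summands lying below the blocked arrows are closed under the whole of $\osp$ and form a proper submodule, whereas the downward arrows across the cut remain nonzero; thus $\cH_k$ is a non-split extension and, consistently with the cyclicity argument above, stays indecomposable. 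This disposes of all cases.
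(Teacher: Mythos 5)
Your proposal reconstructs essentially the paper's own proof: the multiplicity-free $\sosp$-decomposition of Theorem \ref{decompintoirreps} reduces everything to connectivity of the grid under the odd part, and the deferred ``technical heart'' is exactly what the paper supplies in Lemma \ref{Lf} and Lemma \ref{arrows} --- the only arrow that can degenerate is $(l,p,q)\to(l-1,p+1,q+1)$, with coefficient proportional to $2l\left(\frac{M}{2}+p+q+l-1\right)$ --- while cyclicity on $\cH_k^b$ gives indecomposability just as you argue (the paper's Remark \ref{irrEnv}). Two places where your bookkeeping guesses are off, though neither affects the conclusion: first, for $k>2-M$ there is no blocked arrow to bypass, since $p+q+l=1-\frac{M}{2}$ together with $l\ge 1$ and $p+q\ge 0$ forces $2-\frac{M}{2}\le k=2l+p+q\le 2-M$, so outside the window the coefficient simply never vanishes and no ``alternative path'' is needed; second, the paper establishes reducibility inside the window not by your route of checking that the summands below the blocked arrows are $\osp$-closed (which would require the \emph{universal}, not vector-by-vector, vanishing of the blocked component --- true by a multiplicity-one/Schur argument, but unstated in your sketch) but directly from the $\mathfrak{sl}_2$ relations \eqref{sl2rel}, which yield the proper submodule $R^{2k+M-2}\cH_{2-M-k}\subset\cH_k$ of Equation \eqref{submodule} for free, since $\osp$ commutes with $R^2$ and $\nabla^2$. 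Finally, the paper handles $m=1$ by a separate degenerate diagram (the arrows need $\cH^b_{p\pm 1}\neq 0$, which fails for $p\ge 1$ when $m=1$), a case your uniform grid argument silently passes over.
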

The highest weights of the irreducible representations are given in equations \eqref{hkLk1} and \eqref{hkLk2}, the composition series for the reducible but indecomposable modules is given in Theorem \ref{Lkrep}. The remainder of this subsection is devoted to proving Theorem \ref{irrH}. This solves problem \textbf{P1}. 

\begin{remark}
Contrary to finite dimensional Lie algebras, finite dimensional Lie superalgebras do not possess the property that every finite dimensional representation is completely reducible. The representations $\cH_k$ for $M\in-2\mN$ and $2-\frac{M}{2}\le k\le 2-M$ are examples of representations of $\mathfrak{osp}(m|2n)$ which are indecomposable but not irreducible, hence not completely reducible.
\end{remark}

First we need the following technical lemma.
\begin{lemma}
The functions introduced in Lemma \ref{polythm} satisfy the relation
\label{Lf}
\begin{eqnarray*}
L_{i,2j-1+m}f_{k,p,q}&=&2k\left(\frac{M}{2}+p+q+k-1\right)f_{k-1,p+1,q+1}\, x_i\, {x\grave{}}_{2j-1}
\end{eqnarray*}
for $i\le m$ and $j\le n$ and $L_{il}$ defined in Equation \eqref{ospgen}.
\end{lemma}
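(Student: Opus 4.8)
The plan is to first turn the abstract generator $L_{i,2j-1+m}$ into an explicit first-order differential operator, then to feed it the function $f_{k,p,q}$ and use the chain rule, and finally to reduce the whole statement to one combinatorial identity among the coefficients $a_s$ of Lemma \ref{polythm}.

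First I would make the operator explicit. Since $i\le m$ the index $i$ is even while the index $2j-1+m$ is odd, so $(-1)^{[i][2j-1+m]}=1$ and the definition \eqref{ospgen} gives $L_{i,2j-1+m}=x_i\,\partial_{X^{2j-1+m}}-{x\grave{}}_{2j-1}\,\partial_{x_i}$. Using the index-raising conventions of Subsection \ref{subprel} (in particular $\nabla^{l}=(-1)^{[l]}\partial_{X_l}$ together with the explicit inverse of the block $J$) one finds $\partial_{X^{2j-1+m}}=2\,\partial_{{x\grave{}}_{2j}}$, exactly as in the sample computation of $L_{1,m+2}$ appearing in the proof of Theorem \ref{Pizuniekosp}. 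Hence
\[
L_{i,2j-1+m}=2x_i\,\partial_{{x\grave{}}_{2j}}-{x\grave{}}_{2j-1}\,\partial_{x_i}.
\]

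Next I would apply this to $f_{k,p,q}=f_{k,p,q}(r^2,\theta^2)$. Since $\partial_{x_i}r^2=2x_i$ and, by a short Grassmann computation, $\partial_{{x\grave{}}_{2j}}\theta^2={x\grave{}}_{2j-1}$, the chain rule yields $\partial_{x_i}f_{k,p,q}=2x_i\,\partial_{r^2}f_{k,p,q}$ and $\partial_{{x\grave{}}_{2j}}f_{k,p,q}={x\grave{}}_{2j-1}\,\partial_{\theta^2}f_{k,p,q}$, where $\partial_{r^2},\partial_{\theta^2}$ denote the formal derivatives treating $r^2,\theta^2$ as independent variables. Because $x_i$ commutes with ${x\grave{}}_{2j-1}$, both contributions carry the same prefactor $x_i{x\grave{}}_{2j-1}$ and I obtain
\[
L_{i,2j-1+m}f_{k,p,q}=2\,x_i\,{x\grave{}}_{2j-1}\left(\partial_{\theta^2}f_{k,p,q}-\partial_{r^2}f_{k,p,q}\right).
\]
Comparing with the claimed right-hand side, the lemma is therefore equivalent to the scalar identity $\partial_{\theta^2}f_{k,p,q}-\partial_{r^2}f_{k,p,q}=k\left(\frac{M}{2}+p+q+k-1\right)f_{k-1,p+1,q+1}$.

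Finally, and this is the real content, I would prove that identity by matching the coefficient of $r^{2k-2-2u}\theta^{2u}$ for $0\le u\le k-1$. Writing $f_{k,p,q}=\sum_s a_s r^{2k-2s}\theta^{2s}$, the left-hand side contributes $(u+1)a_{u+1}-(k-u)a_u$, while the right-hand side contributes $k\left(\frac{M}{2}+p+q+k-1\right)\tilde a_u$, where $\tilde a_u$ is the coefficient of $r^{2k-2-2u}\theta^{2u}$ in $f_{k-1,p+1,q+1}$. Expressing $a_u$, $a_{u+1}$ and $\tilde a_u$ through the common factor $D_u=\frac{(n-q-1-u)!}{\Gamma(\frac{m}{2}+p+k-u)}\frac{\Gamma(\frac{m}{2}+p+k)}{(n-q-k)!}$ and using the elementary relations $(u+1)\binom{k}{u+1}=(k-u)\binom{k}{u}=k\binom{k-1}{u}$, the left-hand coefficient collapses to $k\binom{k-1}{u}\big[(\frac{m}{2}+p+k-u-1)-(n-q-u)\big]D_u$, whose bracket equals $\frac{m}{2}-n+p+q+k-1=\frac{M}{2}+p+q+k-1$ because $M=m-2n$; this is precisely $k\left(\frac{M}{2}+p+q+k-1\right)\tilde a_u$. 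I expect the main obstacle to be purely this bookkeeping: tracking the Gamma- and factorial-ratios in $a_s$ so that the shift $(k,p,q)\mapsto(k-1,p+1,q+1)$ aligns the denominators, after which the superdimension $M/2=m/2-n$ emerges automatically from the difference of the two derivative terms. The degenerate case $k=0$ is consistent since then both sides vanish.
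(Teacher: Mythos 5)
Your proposal is correct and follows essentially the same route as the paper: the paper's proof likewise writes $L_{i,2j-1+m}=2x_i\partial_{{x\grave{}}_{2j}}-{x\grave{}}_{2j-1}\partial_{x_i}$, applies it directly to $f_{k,p,q}=\sum_s a_s r^{2k-2s}\theta^{2s}$ (producing exactly your two sums, with coefficients $s\binom{k}{s}$ and $(k-s)\binom{k}{s}$ from the $\theta^2$- and $r^2$-derivatives), and identifies the result with $2k\left(\frac{M}{2}+p+q+k-1\right)x_i{x\grave{}}_{2j-1}f_{k-1,p+1,q+1}$ by the same Gamma/factorial bookkeeping, which it compresses into the phrase ``direct calculation'' while you carry it out explicitly and correctly.
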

\begin{proof}
This expression follows from a direct calculation. Since $L_{i,2j-1+m}=\left(2x_i\partial_{{x\grave{}}_{2j}}-{x\grave{}}_{2j-1}\partial_{x_i}\right)$, the relation
\begin{eqnarray*}
L_{i,2j-1+m}f_{k,p,q}
&=&2x_i{x\grave{}}_{2j-1}\sum_{s=1}^k\frac{k!}{(k-s)!(s-1)!}\frac{(n-q-s)!}{\Gamma (\frac{m}{2}+p+k-s)}\frac{\Gamma(\frac{m}{2}+p+k)}{(n-q-k)!}r^{2k-2s}\theta^{2s-2}\\
&-&{x\grave{}}_{2j-1}2x_i\sum_{s=0}^{k-1}\frac{k!}{(k-s-1)!s!}\frac{(n-q-s)!}{\Gamma (\frac{m}{2}+p+k-s)}\frac{\Gamma(\frac{m}{2}+p+k)}{(n-q-k)!}r^{2k-2s-2}\theta^{2s}\\
&=&2k(\frac{M}{2}+p+q+k-1)x_i{x\grave{}}_{2j-1}f_{k-1,p+1,q+1}
\end{eqnarray*}
holds, which proves the lemma.
\end{proof}

The decomposition of $\cH_k$ in Theorem \ref{decompintoirreps} can be represented by the following diagram, where we use the notation $(l,k-2l-j,j)\leftrightarrow f_{l,k-2l-j,j}\cH_{k-2l-j}^b\otimes\cH_j^f$ for the irreducible $\mathfrak{so}(m)\oplus\mathfrak{sp}(2n)$-representations.
\[
\xymatrix@=10pt@!C{
(0,k,0)&(0,k-1,1)&(0,k-2,2)&(0,k-3,3)&\cdots\\
&(1,k-2,0)&(1,k-3,1)\ar[u]_{\textbf{(1)}}\ar[d]_{\textbf{(4)}}\ar[r]_{\textbf{(3)}}\ar[l]_{\textbf{(2)}}&(1,k-4,2)&\cdots\\
&&(2,k-4,0)&(2,k-5,1)&\cdots\\
}
\]
The arrows represent the actions of the elements of $\cU(\mathfrak{osp}(m|2n))$ constructed in the following lemma, where we will use the notation $(l,p,q)\leftrightarrow f_{l,p,q}\cH_{p}^b\cH_q^f$ again. It is important for the sequel to note that the decomposition above is finite, which is clear from the fact that $\cH_k$ is finite dimensional.

\begin{lemma}
\label{arrows}
Consider the spherical harmonics on $\mR^{m|2n}$ with $m\not=0\not= n$. For $j=1,\cdots,4$, there are spherical harmonics $e_j\in (l,p,q)\not= 0$ (so $l+q\le n$ and $p\le 1$ when $m=1$) and elements $u_j\in \cU(\mathfrak{osp}(m|2n))$ such that $u_je_j$ is non-zero and
\[
u_1 e_1\in(l-1,p+1,q+1)\qquad \mbox{if} \qquad p+q+l\not=1-\frac{M}{2},
\]
\[
u_2 e_2\in (l,p+1,q-1), \quad u_3 e_3\in (l,p-1,q+1) \quad \mbox{and}\quad  u_4 e_4\in (l+1,p-1,q-1)\]
if the resulting subspace of spherical harmonics is non-zero.
\end{lemma}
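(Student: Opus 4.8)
The plan is to realise each of the four arrows by applying a single odd generator $L_{i,m+a}$ from \eqref{ospgen} to a conveniently chosen spherical harmonic $e_j$ and then projecting onto the target $\sosp$-component. Since the projection operators $\mQ^k_{r,s}$ in \eqref{projpiecesSphHarm} are polynomials in the Casimirs $\Delta_{LB,b}$ and $\Delta_{LB,f}$, they lie in $\cU(\osp)$; hence $u_j:=\mQ^k_{l',j'}\circ L_{i,m+a}$ is an admissible element of $\cU(\osp)$ and it suffices to show that the $(l',p',q')$-component of $L_{i,m+a}e_j$ is non-zero. Throughout I would take $e_j=f_{l,p,q}\,h^b_p\,h^f_q$ with $f_{l,p,q}$ as in Lemma \ref{polythm} and $h^b_p\in\cH^b_p$, $h^f_q\in\cH^f_q$ to be specified, and expand $L_{i,m+a}e_j$ by the super-Leibniz rule. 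Writing $L_{i,2a-1+m}=2x_i\partial_{{x\grave{}}_{2a}}-{x\grave{}}_{2a-1}\partial_{x_i}$, this gives
\[
L_{i,2a-1+m}e_j=(L_{i,2a-1+m}f_{l,p,q})\,h^b_p h^f_q+2f_{l,p,q}\,x_i h^b_p\,(\partial_{{x\grave{}}_{2a}}h^f_q)-f_{l,p,q}\,{x\grave{}}_{2a-1}\,(\partial_{x_i}h^b_p)\,h^f_q,
\]
where the first term is computed by Lemma \ref{Lf}.

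The essential trick is to choose $h^b_p,h^f_q$ so that the multiplications $x_ih^b_p$ and ${x\grave{}}_{a}h^f_q$ stay harmonic when we want the ``top'' component and so that the unwanted terms drop out. Taking the null direction $h^b_p=(x_2+\sqrt{-1}\,x_3)^p\in\cH^b_p$ together with $i=1$ kills the third term (since $\partial_{x_1}h^b_p=0$) and makes $x_1h^b_p$ a genuine element of $\cH^b_{p+1}$ (since $\nabla^2_b(x_1h^b_p)=2\partial_{x_1}h^b_p+x_1\nabla^2_bh^b_p=0$), so that no $r^2\cH^b_{p-1}$-correction appears; an analogous isotropic choice of $h^f_q$ for the symplectic variables plays the same role on the fermionic side. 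With these choices the $(l-1,p+1,q+1)$-component of $L_{1,2a-1+m}e_1$ comes solely from the leading part of the Lemma \ref{Lf} term and equals $2l\left(\tfrac{M}{2}+p+q+l-1\right)f_{l-1,p+1,q+1}\,(x_1h^b_p)\,\mathrm{proj}_{q+1}({x\grave{}}_{2a-1}h^f_q)$, which is non-zero precisely when $l\ge1$ and $p+q+l\ne1-\tfrac{M}{2}$; this is exactly the stated condition for arrow $(1)$. Arrows $(2)$ and $(3)$ are isolated the same way: arrow $(2)$ by the $x_i\partial_{{x\grave{}}}$ term acting on the fermionic factor (choosing $h^f_q$ divisible by ${x\grave{}}_{2a}$) and arrow $(3)$ by the ${x\grave{}}\partial_{x}$ term acting on the bosonic factor (now taking $i$ and $h^b_p$ with $\partial_{x_i}h^b_p\ne0$), in each case reading off a scalar coefficient that does not vanish.

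The main obstacle is arrow $(4)$, the move $(l,p,q)\to(l+1,p-1,q-1)$, together with the non-cancellation bookkeeping for arrow $(2)$. Here one cannot keep all multiplications harmonic: the $(l+1,p-1,q-1)$-component is fed by the sub-leading $r^2\cH^b_{p-1}$ part of $x_ih^b_p$ (in the $x_i\partial_{{x\grave{}}}$ term) and by the sub-leading $\theta^2\cH^f_{q-1}$ part of ${x\grave{}}_{2a-1}h^f_q$ (in both the Lemma \ref{Lf} term and the ${x\grave{}}\partial_x$ term), and one must re-expand the products $r^2f_{l,p,q}$ and $\theta^2f_{l,p,q}$ in the $f_{\bullet,p-1,q-1}$-basis of Lemma \ref{polythm} before reading off the coefficient of $f_{l+1,p-1,q-1}$. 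The delicate point is to check that these contributions add up to something non-zero rather than cancelling, which is where the explicit coefficients $a_s$ of Lemma \ref{polythm} and the prefactor of Lemma \ref{Lf} have to be tracked carefully.

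A useful structural shortcut, which I would keep at least as a consistency check, is that arrows $(1)$ and $(4)$ (respectively $(2)$ and $(3)$) are mutually adjoint for the orthosymplectic form $\langle\cdot|\cdot\rangle_{\mS^{m-1|2n}}$: since $\langle L_{ij}f|g\rangle_{\mS^{m-1|2n}}=-(-1)^{([i]+[j])|f|}\langle f|L_{ij}g\rangle_{\mS^{m-1|2n}}$, a non-zero matrix element of $L_{ij}$ from one $\sosp$-component to a neighbour forces the reverse matrix element to be non-zero as well, wherever the form is non-degenerate on the two components. I would nonetheless treat the direct computation as the primary argument, since it remains valid for all $m\ne0\ne n$, including the degenerate cases $M\in-2\mN$ where this form may be singular, and invoke adjointness only to verify the signs and the matching of the non-vanishing conditions.
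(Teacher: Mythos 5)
Your overall strategy coincides with the paper's: act with a single odd generator $L_{1,m+a}$ on $e_j=f_{l,p,q}H^b_pH^f_q$, expand by the Leibniz rule using Lemma \ref{Lf}, choose $H^f_q$ free of the two relevant odd variables, and compose with the projections $\mQ^k_{r,s}$, which lie in $\cU(\osp)$ because they are polynomials in the Casimirs $\Delta_{LB,b}$ and $\Delta_{LB,f}$. However, there is a genuine gap exactly at the point you yourself flag as the ``main obstacle'': for arrows \textbf{(2)}, \textbf{(3)} and \textbf{(4)} you never establish that the coefficient of the target component is non-zero; you only say the contributions ``have to be tracked carefully'' and that cancellation must be ruled out. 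That non-cancellation is the crux of the lemma, since the target component receives several competing contributions (the subleading $r^2\,\partial_{x_1}H^b_p$ part of $x_1H^b_p$ inside the Lemma \ref{Lf} term, the $\theta^2 H^f_{q-1}$ part of ${x\grave{}}_1{x\grave{}}_2H^f_{q-1}$, and the ${x\grave{}}_{2a-1}\partial_{x_1}$ term) which could a priori cancel. The paper closes this without any brute-force coefficient chase: each total contribution is of the form $g(r^2,\theta^2)\,H^b\otimes H^f$ and is harmonic (being a component of a harmonic function), so by the uniqueness statement in Lemma \ref{polythm} it equals $\lambda\, f_{l',p',q'}H^bH^f$ for a scalar $\lambda$; and $\lambda=0$ is impossible because it would force $f_{l,p,q}\equiv 0$ mod $r^2$ (arrow \textbf{(3)}), $f_{l,p,q}\equiv 0$ mod $\theta^2$ (arrow \textbf{(2)}), or $r^2f_{l,p,q}\equiv 0$ mod $\theta^2$ (arrow \textbf{(4)}), contradicting the explicit coefficients $a_s$ of Lemma \ref{polythm}. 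Until you supply an argument of this kind (or the explicit values, e.g.\ $\lambda_1=-\bigl(1+\frac{l}{p+m/2-1}\bigr)$), your proof of three of the four arrows is incomplete; and, as you concede, your adjointness shortcut cannot substitute for it precisely in the degenerate cases $M\in-2\mN$, where the lemma's content matters most.

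A secondary defect: your null-direction choice $h^b_p=(x_2+\sqrt{-1}\,x_3)^p$ presupposes $m\ge 3$. For $m=2$ no harmonic polynomial of degree $p\ge 2$ is annihilated by a fixed $\partial_{x_i}$ (such an $h$ would depend on one variable only, hence be of degree at most $1$), and $m=1$ needs the separate observation that $p\le 1$ there, whereas the lemma covers all $m\neq 0$. This is repairable along the paper's lines: replace $x_1H^b_p$ by its harmonic projection $H^b_{p+1}=x_1H^b_p-\frac{r^2}{2p+m-2}\partial_{x_1}H^b_p$ and let $\mQ^{2l+p+q}_{l-1,q+1}$ discard the $\cH^b_{p-1}$-type remainder, which carries a different $\Delta_{LB,b}$-eigenvalue; but as written your arrow \textbf{(1)} argument does not cover all the cases claimed.
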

\begin{proof}
The Laplace-Beltrami operators $\Delta_{LB,b}$ and $\Delta_{LB,f}$ are quadratic Casimir operators of respectively $\mathfrak{so}(m)$ and $\mathfrak{sp}(2n)$, see Equation \eqref{LBosp} for the limit cases $n=0$ and $m=0$. In particular they are elements of $\cU(\osp)$. This means the projection operators in Equation \eqref{projpiecesSphHarm} are elements of $\cU(\mathfrak{osp}(m|2n))$. 

First, consider, for $q<n$, a spherical harmonic $H_q^f\in\cH_q^f$ which does not contain ${x\grave{}}_1$ or ${x\grave{}}_2$. This exists since it can be taken as a spherical harmonic of degree $q$ in the Grassmann algebra $\Lambda_{2n-2}$ generated by the $\{{x\grave{}}_j,j>2\}$ (and $q\le n-1$). This also implies that ${x\grave{}}_1H_q^f\in\cH_{q+1}^f$ and is different from zero. Next, consider a spherical harmonic $H_p^b\in \cH_p^b$ (with $p\le 1$ when $m=1)$. Define $H_{p+1}^b\in \cH_{p+1}^b$ by
\begin{eqnarray*}
H_{p+1}^b&=&x_1H_p^b-\frac{r^2}{2p+m-2}\partial_{x_1}H_p^b.
\end{eqnarray*}
Lemma \ref{Lf} then implies
\begin{eqnarray*}
L_{1,1+m}\left(f_{l,p,q}H_p^bH_q^f\right)&=&2l(\frac{M}{2}+p+q+l-1)\,f_{l-1,p+1,q+1}\, H_{p+1}^b\, {x\grave{}}_{1}H_q^f\\
&+&\frac{l}{p+\frac{m}{2}-1}(\frac{M}{2}+p+q+l-1)\,r^2\,f_{l-1,p+1,q+1}\,\partial_{x_1}H_{p}^b\, {x\grave{}}_{1}H_q^f\\
&-&f_{l,p,q}\,\partial_{x_1}H_{p}^b\, {x\grave{}}_{1}H_q^f.
\end{eqnarray*}
The sum of the last two lines is of the form $g(r^2,\theta^2)\partial_{x_1}H_{p}^b{x\grave{}}_{1}H_q^f$ and is an element of $\cH_{2l+p+q}$ since the left-hand and right-hand side of the first line are. Lemma \ref{polythm} then yields $g(r^2,\theta^2)=\lambda_1 f_{l,p-1,q+1}$ for some $\lambda_{1}\in\mR$, with $\lambda_1\not=0$ since otherwise $f_{l,p,q}\equiv 0$ mod $r^2$. The identity $\frac{l}{p+\frac{m}{2}-1}(\frac{M}{2}+p+q+l-1)r^2f_{l-1,p+1,q+1}-f_{l,p,q}=\lambda_1 f_{l,p-1,q+1}$ also follows from a direct calculation, showing that $\lambda_1=-(1+\frac{l}{p+\frac{m}{2}-1})$. Summarizing, we obtain
\begin{eqnarray*}
L_{1,1+m}\left(f_{l,p,q}H_p^bH_q^f\right)&=&2l(\frac{M}{2}+p+q+l-1)f_{l-1,p+1,q+1}H_{p+1}^b{x\grave{}}_{1}H_q^f\\
&+&\lambda_1 f_{l,p-1,q+1}(\partial_{x_1}H_{p}^b)({x\grave{}}_{1}H_q^f)
\end{eqnarray*}
for some $\lambda_1\not=0$. Using this we can prove the arrows \textbf{(1)} and \textbf{(3)}. Both arrows start from spaces with $q<n$ since otherwise $\cH_{q+1}^f=0$.

\textbf{(1)}: Take $H_p^b$ and $H_q^f$ as defined above and assume $x_1H_p^b\not=\frac{r^2}{2p}\partial_{x_1}H_p^b$. This is always possible, the only non-trivial case is if $m=1$, but then $p$ has to be zero since otherwise $\cH_{p+1}^b=0$. The previous calculations then imply
\begin{eqnarray*}
\mQ^{2l+p+q}_{l-1,q+1}\left(L_{1,1+m}f_{l,p,q}H_p^bH_q^f\right)&=&2l(\frac{M}{2}+p+q+l-1)f_{l-1,p+1,q+1}H_{p+1}^b{x\grave{}}_{1}H_q^f\\
&\in& f_{l-1,p+1,q+1}\cH_{p+1}^b\otimes\cH_{q+1}^f
\end{eqnarray*}
which is different from zero since $l>0$ (otherwise $f_{l-1,p+1,q+1}\cH_{p+1}^b\otimes\cH_{q+1}^f=0$) and since we assumed $p+q+l\not=1-\frac{M}{2}$ for this arrow.

\textbf{(3)}: Take again $H_p^b$ and $H_q^f$ as defined above but now assume $\partial_{x_1}H_p^b\not=0$ (which is always possible since $p>0$ for this arrow). We then obtain
\begin{eqnarray*}
\mQ^{2l+p+q}_{l,q+1}\left(L_{1,1+m}f_{l,p,q}H_p^bH_q^f\right)&=&\lambda_1 f_{l,p-1,q+1}(\partial_{x_1}H_{p}^b)({x\grave{}}_{1}H_q^f)\\
&\in& f_{l,p-1,q+1}\cH_{p-1}^b\otimes\cH_{q+1}^f,
\end{eqnarray*}
which is different from zero.

The arrows \textbf{(2)} and \textbf{(4)} can be proved similarly. Consider for $1\le q\le n$ a spherical harmonic $\cH_{q-1}^f\in \cH_{q-1}^f$ which does not contain ${x\grave{}}_{1}$ or  ${x\grave{}}_{2}$ (and therefore ${x\grave{}}_{2}H_{q-1}^f\in\cH_q^f$), define $H_{q+1}^f\in\cH_{q+1}^f$ (not necessarily different from zero) by
\begin{eqnarray*}
H_{q+1}^f&=&{x\grave{}}_{1}{x\grave{}}_{2}H_{q-1}^f-\frac{1}{q-n-1}\theta^2H_{q-1}^f.
\end{eqnarray*}
Then we calculate
\begin{eqnarray*}
L_{1,1+m}\left(f_{l,p,q}H_p^b{x\grave{}}_2H_{q-1}^f\right)&=&2l(\frac{M}{2}+p+q+l-1)f_{l-1,p+1,q+1}x_1H_p^b{x\grave{}}_{1}{x\grave{}}_{2}H_{q-1}^f\\
&+&2f_{l,p,q}x_1H_p^bH_{q-1}^f-f_{l,p,q}\partial_{x_1}H_p^b{x\grave{}}_{1}{x\grave{}}_{2}H_{q-1}^f.
\end{eqnarray*}

This leads to
\begin{eqnarray*}
\mQ^{2l+p+q}_{l,q-1}\left(L_{1,1+m}f_{l,p,q}H_p^b{x\grave{}}_2H_{q-1}^f\right)=\lambda_2 f_{l,p+1,q-1}H_{p+1}^bH_{q-1}^f\in f_{l,p+1,q-1}\cH_{p+1}^b\cH_{q-1}^f.
\end{eqnarray*}
with $\lambda_2 f_{l,p+1,q-1}=2f_{l,p,q}-\frac{2l}{q-n-1}(\frac{M}{2}+p+q+l-1)\theta^2f_{l-1,p+1,q+1}$ with $\lambda_2\not=0$ since otherwise $f_{l,p,q}\equiv 0$ mod $\theta^2$.

The calculation above also implies
\begin{eqnarray*}
& &\mQ^{2l+p+q}_{l+1,q-1}\left(L_{1,1+m}f_{l,p,q}H_p^b{x\grave{}}_2H_{q-1}^f\right)\\
&=&\lambda_3 f_{l+1,p-1,q-1}(\partial_{x_1}H_{p}^b)H_{q-1}^f\in f_{l+1,p-1,q-1}\cH_{p-1}^b\cH_{q-1}^f,
\end{eqnarray*}
with
\begin{eqnarray*}
\lambda_3 f_{l+1,p-1,q-1}&=&\frac{l(\frac{M}{2}+p+q+l-1)}{(p+\frac{m}{2}-1)(q-n-1)}f_{l-1,p+1,q+1}\theta^2r^2+\frac{r^2}{p+\frac{m}{2}-1}f_{l,p,q}-\frac{\theta^2}{q-n-1}f_{l,p,q}.
\end{eqnarray*}

Again, $\lambda_3\not=0$, since otherwise $r^2f_{l,p,q}\equiv 0$ mod $\theta^2$. This implies arrows \textbf{(2)} and \textbf{(4)} by identifying $e_2=\mQ^{2l+p+q}_{l,q-1}\circ L_{1,1+m}$ and $e_4=\mQ^{2l+p+q}_{l+1,q-1}\circ L_{1,1+m}$.
\end{proof}

\begin{remark}
\label{irrEnv}
A  finite dimensional representation $V$ of a Lie superalgebra $\mathfrak{g}$ is irreducible if and only if for any two vectors $u,v\in V$ with $u\not=0$, there is an element $X$ in the universal enveloping algebra $\cU(\mathfrak{g})$ such that $Xu=v$. If a finite dimensional representation $V$ of a Lie superalgebra $\mathfrak{g}$ has one vector $u\in V$ such that for each vector $v\in V$ there is an element $X\in\cU(\mathfrak{g})$ that satisfies $Xu=v$ the representation is indecomposable.
\end{remark}

Now we are ready to prove Theorem \ref{irrH}.
\begin{proof}
For $M\in-2\mN$, the $\mathfrak{sl}_2$ relations for $\nabla^2$, $R^2$ and $\mE+\frac{M}{2}$, see \eqref{sl2rel} (or Lemma 3 in \cite{DBE1}), imply that for $2-\frac{M}{2}\le k\le 2-M$
\begin{eqnarray}
\label{submodule}
R^{2k+M-2}\cH_{2-M-k}\subset \cH_k.
\end{eqnarray}
The fact that $\Delta R^{2k+M-2}\cH_{2-M-k}=0$ can also be obtained from equation \eqref{LB} since $\Delta_{LB}$ commutes with $R^2$. Equation \eqref{submodule} defines a non-trivial submodule (since $\dim\cH_{2-M-k}< \dim \cH_{k}$) which proves the reducibility for $2-\frac{M}{2}\le k\le 2-M $. 

We prove the rest of the theorem using Remark \ref{irrEnv}. First we assume $m\not=1$ and take one fixed $H_k^b\in\cH_k^b\subset\cH_k$. We prove that for every element of the form
\begin{eqnarray*}
u&=&f_{l,k-2l-j}H_{k-2l-j}^bH_j^f\in f_{l,k-2l-j}\cH_{k-2l-j}^b\otimes\cH_j^f\\
\end{eqnarray*}
there is an elements $X\in \cU(\mathfrak{osp}(m|2n))$ for which $XH_k^b=u$. Since $\cH_k^b$ is an irreducible $\mathfrak{so}(m)$-representation there is an element $X_1\in \cU(\mathfrak{so}(m))$ such that $X_1H_k^b$ is an element in $\cH_k^b$ for which there is an element $X_2$ of $\cU(\mathfrak{osp}(m|2n))$ determined in Lemma \ref{arrows} (arrow \textbf{(3)}) such that
\begin{eqnarray*}
X_2X_1H_k^b&=&H_{k-1}^bH_1^f\in\cH_{k-1}^b\otimes\cH_1^f.
\end{eqnarray*}
This procedure can be repeated until we find an $X'\in \cU(\mathfrak{osp}(m|2n))$ and an $H_{k-j}^bH_j^f\in\cH_{k-j}^b\otimes\cH_j^f$ such that
\begin{eqnarray*}
X'H_k^b&=&H_{k-j}^bH_j^f.
\end{eqnarray*}
By the same arguments and by using arrow \textbf{(4)} in Lemma \ref{arrows} we find an $X\in \cU(\mathfrak{osp}(m|2n))$ such that $XH_k^b=u$. This shows the representation is always indecomposable.

For the cases $M\not\in-2\mN$ or $M\in-2\mN$ and $k$ not in the interval $[2-\frac{M}{2},2-M]$, we consider $f_{l,p,q}\cH_p^b\otimes\cH_q^f\subset\cH_k$, so $k=2l+p+q$. If $M\not\in-2\mN$, $p+q+l+\frac{M}{2}-1$ is never zero. If $M\in-2\mN$ and the relation $p+q+l=1-\frac{M}{2}$ would hold, this would imply that, since $l>0$ and $p+q\ge0$,
\begin{eqnarray*}
k=2l+p+q\ge 2-\frac{M}{2}&\mbox{and}&k=2l+p+q\le 2-M
\end{eqnarray*}
hold. Therefore the arrows in Lemma \ref{arrows} will always exist for $M\not\in-2\mN$ and for $M\in-2\mN$ with $k\not\in[2-\frac{M}{2},2-M]$. Therefore we can prove that for every such $u$ there is an element $Y\in\cU(\mathfrak{g})$ such that $Yu=H_k^b$ holds, by using arrows \textbf{(1)} and \textbf{(2)} similarly as \textbf{(3)} and \textbf{(4)} before. This yields the irreducibility for those cases.

Now consider the case $m=1$, when $k\le n$. Taking into account that $H_p^b=0$ when $p>1$, the decomposition diagram above Lemma \ref{arrows} looks like
\[
\xymatrix@=7pt@!C{
&&(0,1,k-1)\ar[d]_{\textbf{(4)}}&(0,k,0)\ar[l]_{\textbf{(2)}}\\
&\cdots&(1,0,k-2)\\
&(\lfloor\frac{k}{2}\rfloor-1,1,\nu(k)+1)\ar[d]_{\textbf{(4)}}&(\lfloor\frac{k}{2}\rfloor-1,0,\nu(k)+2)\ar[l]_{\textbf{(2)}}&\\
\left[(\frac{k-1}{2},1,0)\right]&(\lfloor\frac{k}{2}\rfloor,0,\nu(k))\ar[l]_{\textbf{(2)}}\\
}
\]

where $\nu(k)$ is equal to $0$ if $k$ is even and equal to $1$ if $k$ is odd. The last part is between square brackets since it only exists if $k$ is odd. From this diagram it is immediately clear that $\cH_k$ will be irreducible. The case $n<k\le 2n+1$ can be treated similarly.
\end{proof}

\begin{remark}
The proof above shows that $\cH_k$ (for all values of $k$ and $m|2n$) always corresponds to  the quotient of a Verma module. 
\end{remark}

Since $\Delta_{LB}$ is the quadratic Casimir operator of $\osp$ (see equation \eqref{LBosp}), vectors with a different eigenvalue can never be in the same indecomposable representation. The form of the Laplace-Beltrami operator \eqref{LB} shows that $R^{2p}\cH_{k-2p}\subset\cH_k$ is therefore only possible if
\begin{eqnarray*}
(k-2p)(M-2+k-2p)&=&k(M-2+k)
\end{eqnarray*}
or $p=k-1+M/2$. Combining this with equation \eqref{submodule} shows that every possibility $R^{2p}\cH_{k-2p}\subset\cH_k$ allowed by the Laplace-Beltrami operator appears.

We introduce the notation $L_\lambda^{m|2n}$ for the unique irreducible $\osp$-module with highest weight $\lambda$, see e.g. \cite{MR051963}, even though we will not always use the distinguished root system of \cite{MR051963}.  The irreducible $\mathfrak{sl}_2$-module with lowest weight $j$ is denoted by $L^{(j)}$.

\subsection{The case $m=1$}
For the simple root system of $\mathfrak{osp}(1|2n)$ we choose the standard simple root system of $\mathfrak{sp}(2n)$. This corresponds to the distinguished root system in \cite{MR051963}. The only non-zero spaces of spherical harmonics are $\cH_k$ for $k\le 2n+1$, which follows from Theorem \ref{decompintoirreps} since $\cH_p^b=0$ for $p>1$. If $k\le n$, the highest weight vector of $\cH_k$ for $\mathfrak{osp}(1|2n)$ is the highest weight vector of $\cH_k^f$ for $\mathfrak{sp}(2n)$. For $k> n$ (and $k\le 2n+1$), the highest weight vector is the highest weight vector of 
\begin{eqnarray*}
f_{k-n-1,1,2n-k+1}\,x_1\,\cH_{2n-k+1}^f
\end{eqnarray*}
for $\mathfrak{sp}(2n)$. This can be seen from the fact that $(k-n-1,2n-k+1)$ is the unique pair $(j,l)$ subject to $j+l\le n$ and $2j+l=k$ or $2j+l+1=k$ for which $l$ is maximal. This implies that the highest weight of $\cH_k$ is $(1,\cdots,1,0,\cdots,0)$ where the integer $1$  is repeated $k$ times for $k\le n$ and $2n-k+1$ times for $k>n$. Hence, we obtain
\begin{eqnarray}
\label{hkLk1}
\cH_k&\cong&L^{1|2n}_{(1,\cdots,1,0,\cdots,0)}
\end{eqnarray}
with $L^{1|2n}_{(1,\cdots,1,0,\cdots,0)}$ the irreducible $\mathfrak{osp}(1|2n)$ representation with highest weight $(1,\cdots,1,0,\cdots,0)$, where the integer $1$ has to be repeated as described above. The highest weight of $\cH_k$ in the standard notation corresponds to $\delta_1+\delta_2+\cdots+\delta_k$ if $k\le n$ and $\delta_1+\delta_2+\cdots+\delta_{2n-k+1}$ if $k>n$.

 For the polynomials on the supersphere this implies
\begin{eqnarray*}
\cH\cong\bigoplus_{k=0}^{2n+1}\cH_k\cong\bigoplus_{k=0}^n L^{1|2n}_{(\underline{1}_k,\underline{0}_{n-k})}\oplus\bigoplus_{k=n+1}^{2n+1}L^{1|2n}_{(\underline{1}_{2n-k+1},\underline{0}_{k-1-n})}.
\end{eqnarray*}

The previous results also determine the representation $S(V)_k=\odot^k V$ of supersymmetric tensors of degree $k$ for the natural module $V$ for $\mathfrak{osp}(1|2n)$. This representation can be identified with $\cP_k$. For convenience we consider the cases $k$ even and odd separately. The Fischer decomposition \eqref{superFischer} and the fact that $\cH_k=0$ for $k>2n+1$ imply
\begin{eqnarray*}
S(V)_{2p}\cong \cP_{2p}&=&\bigoplus_{j=0}^{\min(n,p)}R^{2p-2j}\cH_{2j}\\
&\cong&\bigoplus_{j=0}^{\min(\lfloor \frac{n}{2}\rfloor,p)}L^{1|2n}_{(\underline{1}_{2j},\underline{0}_{n-2j})}\oplus\bigoplus_{j=\lfloor \frac{n}{2}\rfloor+1}^{\min( n,p)}L^{1|2n}_{(\underline{1}_{2n-2j+1},\underline{0}_{2j-1-n})}
\end{eqnarray*}
and
\begin{eqnarray*}
S(V)_{2p+1}\cong \cP_{2p+1}&=&\bigoplus_{j=0}^{\min(n,p)}R^{2p-2j}\cH_{2j+1}\\
&\cong&\bigoplus_{j=0}^{\min(\lfloor \frac{n-1}{2}\rfloor,p)}L^{1|2n}_{(\underline{1}_{2j+1},\underline{0}_{n-2j-1})}\oplus\bigoplus_{j=\lfloor \frac{n+1}{2}\rfloor}^{\min( n,p)}L^{1|2n}_{(\underline{1}_{2n-2j},\underline{0}_{2j-n})}.
\end{eqnarray*}

Finally, decomposition \eqref{superFischer} also leads to the following conclusion. Under the joint action of $\mathfrak{sl}_2\times\mathfrak{osp}(1|2n)$, the space $\cP=S(V)$ is isomorphic to the multiplicity free irreducible direct sum decomposition
\begin{eqnarray*}
\cP&\cong &\bigoplus_{k=0}^{n} L^{(k+1/2-n)}\otimes L^{1|2n}_{(\underline{1}_k,\underline{0}_{n-k})}\oplus \bigoplus_{k=n+1}^{2n+1} L^{(k+1/2-n)}\otimes L^{1|2n}_{(\underline{1}_{2n-k+1},\underline{0}_{k-1-n})}.
\end{eqnarray*}

\subsection{The case $m-2n\not\in-2\mN$ with $m>1$}

The simple root system of $\mathfrak{osp}(m|2n)$ as chosen in \cite{MR2395482} is used. This differs from the standard simple root system (see e.g. \cite{MR1773773}) except for $\mathfrak{osp}(2|2n)$, but is more appropriate for the type of representations we will study. The positive odd roots are $\epsilon_j+\delta_i$, $\delta_i$ (in case $m$ is odd) and $\epsilon_j-\delta_i$ (instead of $\delta_i-\epsilon_j$ as in \cite{MR1773773, MR051963}). The connection with the standard root system will be made in Remark \ref{stanroot}.

The highest weight vector of $\cH_k$ for $\mathfrak{osp}(m|2n)$ is therefore the highest weight vector of $\cH_k^b$ for $\mathfrak{so}(m)$, which has weight $(k,0,\cdots,0)$, where $0$ is repeated $\lfloor m/2\rfloor -1$ times. This leads to the highest weight $(k,0,\cdots,0)$ for $\cH_k$ as an $\mathfrak{osp}(m|2n)$-representation, where $0$ is repeated $\lfloor m/2\rfloor +n-1$ times. Theorem \ref{irrH} therefore implies
\begin{eqnarray}
\label{hkLk2}
\cH_k&\cong&L^{m|2n}_{(k,0,\cdots,0)},
\end{eqnarray}
or the highest weight of $\cH_k$ is $k\epsilon_1$. A similar reasoning as in the previous section yields
\begin{eqnarray*}
\cH\cong\bigoplus_{k=0}^{\infty}\cH_k\cong\bigoplus_{k=0}^\infty L^{m|2n}_{(k,0,\cdots,0)}
\end{eqnarray*}
for the polynomials on the supersphere and 
\begin{eqnarray*}
S(V)_{k}\cong \cP_k&=&\bigoplus_{j=0}^{\lfloor\frac{k}{2}\rfloor}R^{2j}\cH_{k-2j}\cong\bigoplus_{j=0}^{\lfloor k/2\rfloor}L^{m|2n}_{k-2j,0,\cdots,0},
\end{eqnarray*}
for $V$ the $m|2n$-dimensional super vectorspace and $S(V)$ the supersymmetric tensor powers. Finally, under the joint action of $\mathfrak{sl}_2\times\mathfrak{osp}(m|2n)$, the space $\cP=S(V)$ is isomorphic to the multiplicity free irreducible direct sum decomposition
\begin{eqnarray*}
\cP&\cong &\bigoplus_{k=0}^\infty L^{(k+M/2)}\otimes L^{m|2n}_{(k,0,\cdots,0)}.
\end{eqnarray*}
The decomposition also satisfies the property that for each distinct representation of $\mathfrak{sl}_2$ there corresponds exactly one irreducible representation of $\osp$, which is required for Howe dual pairs.

\subsection{The case $m-2n\in-2\mN$}
In case $m-2n\in-2\mN$ there is no Fischer decomposition as in Lemma \ref{superFischerLemma}. This also implies that the spaces of spherical harmonics $\cH=\oplus_{k=0}^\infty\cH_k$ do not necessarily correspond to the polynomials on the supersphere. The spaces which do correspond to the polynomials on the supersphere are given by $\cP_k/(R^2\cP_{k-2})$. These spaces also form $\osp$-modules, since $\osp$ commutes with $R^2$.

\begin{theorem}
\label{repPP}
Consider the polynomials on $\cP=\bigoplus_{k=0}^\infty\cP_k$ on $\mR^{m|2n}$ and the spaces $\cP_k/(R^2\cP_{k-2})$ of polynomials on the supersphere $\mS^{m-1|2n}$. The $\osp$-representation $\cP_k/(R^2\cP_{k-2})$ is always indecomposable. It is isomorphic to $\cH_k$ when $\cH_k$ is irreducible. When $\cH_k$ is reducible, so is $\cP_k/(R^2\cP_{k-2})$, but then $\cP_k/(R^2\cP_{k-2})\not\cong\cH_k$.
\end{theorem}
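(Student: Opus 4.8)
The plan is to realise $\cP_k/(R^2\cP_{k-2})$ as the contragredient dual of $\cH_k$ and then read off all three assertions from the corresponding facts about $\cH_k$. Write $Q_k=\cP_k/(R^2\cP_{k-2})$; throughout $M\in-2\mN$ forces $m$ even, so $m\ge2$. First I would record two exactness facts. Multiplication by $R^2$ is injective on $\cP$: localising away from the origin, $R^2=r^2(1+\theta^2/r^2)$ with $\theta^2/r^2$ nilpotent, so $R^2$ is invertible in $\cO(\mR^{m|2n}_0)$ and a polynomial killed by $R^2$ vanishes on $\mR^m_0$, hence vanishes. Thus $0\to\cP_{k-2}\xrightarrow{R^2}\cP_k\to Q_k\to0$ is an exact sequence of $\osp$-modules and $\dim Q_k=\dim\cP_k-\dim\cP_{k-2}$. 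Dually, $\nabla^2:\cP_k\to\cP_{k-2}$ is surjective: ordering by decreasing Grassmann degree, the top fermionic layer is hit using the ordinary surjectivity of $\nabla^2_b:\mR[\ux]_{k-d}\to\mR[\ux]_{k-2-d}$ (here $d\le k-2$, so $k-d\ge2$), and the lower layers follow by downward induction after subtracting the fermionic corrections. Hence $\dim\cH_k=\dim\ker(\nabla^2|_{\cP_k})=\dim\cP_k-\dim\cP_{k-2}=\dim Q_k$.

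The heart of the argument is a perfect $\osp$-invariant pairing between $Q_k$ and $\cH_k$. On $\cP_k$ take the Fischer form $B$, for which $R^2$ and $\nabla^2$ are mutually adjoint up to a scalar and with respect to which $\osp$ acts orthosymplectically, just as the $L_{ij}$ do for $\langle\cdot|\cdot\rangle_{\mS^{m-1|2n}}$. Adjointness gives $B(R^2 f,h)=c\,B(f,\nabla^2 h)=0$ for all $h\in\cH_k$, so $R^2\cP_{k-2}$ annihilates $\cH_k$ and $B$ descends to $\bar B:Q_k\times\cH_k\to\mC$. I would then prove $\bar B$ is non-degenerate. Its right radical is an $\osp$-submodule of $\cH_k$; were it non-zero it would contain a minimal submodule. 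If $\cH_k$ is irreducible this forces the radical to be all of $\cH_k$, impossible since $B(H_k^b,H_k^b)\ne0$ for the bosonic harmonic $H_k^b$. If $\cH_k$ is reducible the minimal submodule is the socle $R^{2k+M-2}\cH_{2-M-k}\cong L^{m|2n}_{(2-M-k,0,\cdots,0)}$ (equation \eqref{submodule}, irreducible by Theorem \ref{irrH}); on $R^{2k+M-2}\eta_0$ in this socle, applying the adjunction $k-1+\tfrac M2$ times turns $B(g,R^{2k+M-2}\eta_0)$ into a multiple of $B((\nabla^2)^{k-1+M/2}g,\eta_0)$, which is non-zero for suitable $g$ because iterated $\nabla^2$ maps $\cP_k$ onto $\cP_{2-M-k}$ and $B$ is non-degenerate on the irreducible $\cH_{2-M-k}$. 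Either way the radical vanishes, and since the two spaces have equal finite dimension $\bar B$ identifies $Q_k\cong(\cH_k)^\vee$.

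From this isomorphism the theorem follows. Since $\cH_k$ is indecomposable (Theorem \ref{irrH}), so is its contragredient $Q_k$, which settles the first assertion for every $k$. If $\cH_k$ is irreducible it is self-dual under the contragredient involution, whence $Q_k\cong(\cH_k)^\vee\cong\cH_k$. If $\cH_k$ is reducible, i.e. $2-\tfrac M2\le k\le 2-M$, its socle $L^{m|2n}_{(2-M-k,0,\cdots,0)}$ and head $L^{m|2n}_{(k,0,\cdots,0)}$ are non-isomorphic (as $k\ne 1-\tfrac M2$), so $(\cH_k)^\vee$ is reducible with socle and head interchanged; hence $Q_k$ is reducible and $Q_k\not\cong\cH_k$. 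Equivalently, the composite $\cH_k\hookrightarrow\cP_k\twoheadrightarrow Q_k$ is non-zero with kernel the socle, exhibiting $L^{m|2n}_{(k,0,\cdots,0)}$ as a proper submodule of $Q_k$, whereas inside $\cH_k$ it is only the head.

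The main obstacle is the non-degeneracy of $\bar B$ in the second step: because $M\in-2\mN$ the Fischer form on $\cP_k$ is itself degenerate, so one cannot simply invoke adjointness but must argue directly that its radical does not meet the socle of $\cH_k$ — this is exactly where the surjectivity of iterated $\nabla^2$ and the non-degeneracy of $B$ on the irreducible $\cH_{2-M-k}$ enter. The structural input that $\cH_k$ has length two with the stated socle and head I would take from Theorem \ref{irrH}, the discussion following \eqref{submodule} (only the harmonic degrees $k$ and $2-M-k$ share the Laplace--Beltrami eigenvalue), and Theorem \ref{Lkrep}.
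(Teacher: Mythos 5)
Your proposal is correct, but it takes a genuinely different route from the paper's. The paper argues directly: for $k$ with $\cH_k$ irreducible it notes $\cH_k\cap R^2\cP_{k-2}=0$ (Lemma 5.6 of \cite{Mehler}, or a consequence of irreducibility) and matches dimensions via \eqref{dimHk}, so the composite $\cH_k\hookrightarrow\cP_k\twoheadrightarrow\cP_k/(R^2\cP_{k-2})$ is an isomorphism; in the range $2-\frac{M}{2}\le k\le 2-M$ it observes that the image of $\cH_k$ is a nonzero proper submodule (by \eqref{submodule}), giving reducibility, and that $\cP_k/(R^2\cP_{k-2})$ then has a submodule containing the highest weight vector, hence is not a quotient of a Verma module while $\cH_k$ is, whence non-isomorphism; indecomposability is proved by rerunning the arrow construction of Lemma \ref{arrows} on an $\mathfrak{so}(m)\oplus\mathfrak{sp}(2n)$-decomposition of the quotient. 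You instead identify $Q_k=\cP_k/(R^2\cP_{k-2})$ with the contragredient $\cH_k^\vee$ via the super Fischer pairing and read off all three assertions at once (dual of indecomposable is indecomposable; self-duality of irreducible $\cH_k$ from its non-degenerate invariant form; socle/head swap, together with your cleaner observation that $L^{m|2n}_{(k,0,\cdots,0)}$ occurs as a submodule of $Q_k$ but only as the head of $\cH_k$). Your route buys a stronger uniform statement, $Q_k\cong\cH_k^\vee$, and avoids repeating the arrow machinery; the paper's route stays elementary and self-contained within its own lemmas. Two caveats. First, your ``main obstacle'' is misdiagnosed: the super Fischer form on $\cP_k$ is non-degenerate for \emph{all} $M$ -- in the monomial basis its Gram matrix is built from the positive definite bosonic form tensored with the pairing on the degree-$d$ part of $\Lambda_{2n}$ induced by the symplectic form, which is invertible; what degenerates for $M\in-2\mN$ is only the restriction to $\cH_k$. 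Granting this, adjointness of $R^2$ and $\nabla^2$ plus your dimension count $\dim Q_k=\dim\cH_k$ give $\cH_k^\perp=R^2\cP_{k-2}$ and hence perfection of $\bar B$ immediately, so your radical/socle computation, though valid, is superfluous. Second, you invoke Theorem \ref{Lkrep} (the length-two socle structure of $\cH_k$), which appears \emph{after} Theorem \ref{repPP} in the paper; this is not circular, since the paper's proof of Theorem \ref{Lkrep} rests only on Theorem \ref{irrH}, Lemma \ref{arrows}, Lemma \ref{polythm} and Theorem \ref{decompintoirreps}, but you should say so explicitly, and the standard facts you assert without proof (existence and $\osp$-invariance of the super Fischer form, with the $L_{ij}$ skew-adjoint and $(R^2)^\dagger=\nabla^2$) are routine yet do require the sign bookkeeping of the super setting.
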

\begin{proof}
The theorem is trivial for $M\not\in-2\mN$, so we focus on $M\in-2\mN$. First we consider the cases $k>2-M $ or $k< 2-\frac{M}{2}$. Since then $\cH_k\cap R^2\cP_{k-2}=0$ (Lemma 5.6 in \cite{Mehler} or a direct consequence of the irreducibility of $\cH_k$) and $\dim\cH_k=\dim\left(\cP_{k}/(R^2\cP_{k-2})\right)$, see formula \eqref{dimHk}, it follows immediately that $\cP_k/(R^2\cP_{k-2})\cong \cH_k$. 

In case $2-\frac{M}{2}\le k\le 2-M$, the subrepresentation of $\cP_k/(R^2 \cP_{k-2})$ generated by $\cH_k\subset \cP_k$ is a proper subrepresentation since $\cH_k\cap R^2\cP_{k-2}$ is not zero, see equation \eqref{submodule}. Therefore, when $\cH_k$ is reducible, so is $\cP_k/(R^2\cP_{k-2})$. It also follows that $\cP_k/(R^2\cP_{k-2})$ is not the quotient of a Verma module, because there is a submodule which contains the highest weight vector. This implies $\cP_k/(R^2\cP_{k-2})\not\cong \cH_k$.

We may decompose $\cP_k/(R^2\cP_{k-2})$ into irreducible $\mathfrak{so}(m)\oplus\mathfrak{sp}(2n)$-representations by a similar procedure as applied to $\cH_k$ in Theorem \ref{decompintoirreps}. This yields
\begin{eqnarray*}
\cP_k/(R^2\cP_{k-2})&=&\bigoplus_{j,l}\left(\theta^{2j}\,\cH_{k-2j-l}^b\otimes\cH^f_l+R^2\cP_{k-2}\right),
\end{eqnarray*}
which is a consequence of the purely bosonic and fermionic Fischer decompositions in Lemma \ref{superFischerLemma}. Again we can construct four arrows from which it will follow that $\cP_k/(R^2\cP_{k-2})$ is indecomposable. The procedure is identical to the proof of Theorem \ref{irrH}, but the calculations are simpler.
\end{proof}

\subsection{The Cartan product}

One of the results in \cite{MR2130630} is that for $V$ a finite dimensional irreducible $\mathfrak{sl}(n)$-module, the property
\begin{eqnarray*}
\circledcirc^kV&=&\left(\circledcirc^{k-1}V\right)\otimes V\cap V\otimes\left(\circledcirc^{k-1}V\right)
\end{eqnarray*}
holds, for $\circledcirc^kV$ the irreducible representation inside $\otimes^kV$ with highest weight known as the Cartan product. In \cite{Joseph} it will be proved that this property can be extended to arbitrary semisimple Lie algebras, $\mathfrak{gl}(p|q)$ and $\mathfrak{osp}(2|2n)$. In the case of other Lie superalgebras tensor products of simple modules are not always semisimple and the Cartan product not always defined. The results in this Section show that another relation holds for the natural representation of $\osp$. The proposed property is
\begin{eqnarray*}
\cU(\mg)\cdot v_+^{\otimes k}&=&\left(\cU(\mg)\cdot  v_+^{\otimes k-1}\right)\otimes\mg\cap \mg\otimes\left(\cU(\mg)\cdot  v_+^{\otimes k-1}\right)
\end{eqnarray*}
for $v_+$ the highest weight vector of the representation $V$ and $\mg$ the Lie superalgebra. It follows from the identification of traceless symmetric tensor with spherical harmonics and the results in this section that this equation holds in case $V=\mC^{m|2n}$ and $\mg=\osp$. In \cite{Joseph} this conjecture will be discussed for other cases.

\subsection{The algebras $\mathfrak{osp}(4n|2m)$ and $\mathfrak{osp}(4n+1|2m)$}

In the theory of Howe dual pairs the two dual algebras are each other's centralizers inside a bigger algebra, such that the relevant representation (which has a multiplicity free decomposition into irreducible pieces under the action of the dual pair) constitutes an irreducible representation of the big algebra.

For the Fischer decomposition of polynomials on $\mR^m$ this is $\mathfrak{so}(m)\times \mathfrak{sl}_2\subset \mathfrak{sp}(2m)$. The polynomials $\mR[x_1,\cdots, x_m]$ decompose into two irreducible $\mathfrak{sp}(2m)$-representations, corresponding to the even and odd polynomials. Thus there are two non-isomorphic representations corresponding to the Howe dual pair $\mathfrak{sp}(2m)\supset \mathfrak{so}(m)\times \mathfrak{sl}_2$. To obtain the polynomials as one irreducible representation, $\mathfrak{sp}(2m)$ needs to be embedded inside the superalgebra $\mathfrak{osp}(1|2m)$. In this subsection we obtain the corresponding results for $\mathfrak{sl}_2\times\mathfrak{osp}(m|2n)$.

\begin{theorem}
The Lie superalgebra spanned by the differential operators 
\[X_i(-1)^{(1-[i])\mE},\quad \partial_{X_i}(-1)^{(1-[i])\mE},\]
\[ X_iX_j(-1)^{([i]+[j])\mE},\quad\partial_{X_i}\partial_{X_j}(-1)^{([i]+[j])\mE}\quad \mbox{and}\quad X_i\partial_{X_j}(-1)^{([i]+[j])\mE}+\frac{(-1)^{[i]}\delta_{ij}}{2}
\]
where the generators $X_i(-1)^{(1-[i])\mE}$ and $\partial_{X_i}(-1)^{(1-[i])\mE}$ have gradation $1-[i]$, is isomorphic to $\mathfrak{osp}(4n+1|2m)$. If the operators $X_i(-1)^{(1-[i])\mE}$ and $\partial_{X_i}(-1)^{(1-[i])\mE}$ are removed and only the quadratic elements are used the Lie superalgebra $\mathfrak{osp}(4n|2m)$ is obtained. With respect to these realizations, the space of polynomials on $\mR^{m|2n}$ satisfies
\begin{eqnarray*}
\cP&\cong&L^{4n+1|2m}_{(\frac{1}{2},\cdots, \frac{1}{2},-\frac{1}{2},\cdots,-\frac{1}{2})}\qquad \mbox{where $1/2$ is repeated $2n$ times and $-1/2$ is repeated $m$ times}\\
\cP&\cong&L^{4n|2m}_{(\frac{1}{2},\cdots, \frac{1}{2},-\frac{1}{2},\cdots,-\frac{1}{2})}\,\oplus\,L^{4n|2m}_{(\frac{1}{2},\cdots, \frac{1}{2},-\frac{1}{2},\cdots,-\frac{1}{2},-\frac{3}{2})},
\end{eqnarray*}
where the first irreducible representation corresponds to the polynomials of even homogeneous degrees and the second to those of odd degrees.
\end{theorem}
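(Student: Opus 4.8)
The plan is to recognize the listed operators as the super-oscillator (Fock) realization of $\mathfrak{osp}(4n+1|2m)$ and to identify $\cP$ with the associated spinor--metaplectic module. For the isomorphism I would use the realization of an orthosymplectic superalgebra as the super-antisymmetric bilinear forms $\Lambda^2_s(U)$ on a super vector space $U=U_{\bar 0}\oplus U_{\bar 1}$, with the form symmetric (orthogonal) on the $(4n+1)$-dimensional even part $U_{\bar 0}$ and antisymmetric (symplectic) on the $2m$-dimensional odd part $U_{\bar 1}$; the even summand $\Lambda^2(U_{\bar 0})\oplus S^2(U_{\bar 1})$ matches $\mathfrak{so}(4n+1)\oplus\mathfrak{sp}(2m)$ and the odd summand $U_{\bar 0}\otimes U_{\bar 1}$ the remaining generators. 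Since the bosonic variables carry a Weyl (symplectic) structure and the fermionic variables a Clifford (orthogonal) structure, I would send the bosonic quadratics to $S^2(U_{\bar 1})=\mathfrak{sp}(2m)$, the fermionic quadratics to $\Lambda^2(U_{\bar 0})$, and the mixed quadratics to $U_{\bar 0}\otimes U_{\bar 1}$. The grading twist is the crux: the factor $(-1)^{(1-[i])\mE}$ renders each bosonic linear operator odd and each fermionic linear operator even, which is exactly the parity these acquire once interpreted as the contraction of a single distinguished orthogonal direction $e_0$---realized through $(-1)^{\mE}$---with a Weyl, resp. Clifford, generator. I would confirm the mechanism on a representative relation such as $\{x_i(-1)^{\mE},x_j(-1)^{\mE}\}=-2x_ix_j$, which shows the odd--odd bracket landing in the even part with the symmetric sign required by $\mathfrak{osp}$; untwisted, the same operators would merely commute and no orthosymplectic bracket would appear.

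With this correspondence fixed I would verify closure and reproduce the structure constants of $\mathfrak{osp}(4n+1|2m)$ by a direct graded-commutator computation, the constant $\tfrac{(-1)^{[i]}\delta_{ij}}{2}$ in the Cartan-type generators fixing the normalization of the diagonal. A dimension count confirms that nothing is missing and that the algebra is exactly $\mathfrak{osp}(4n+1|2m)$: the $4n$ even linear operators ${x\grave{}}_j,\partial_{{x\grave{}}_j}$ are precisely the $e_0$-coupled generators enlarging $\mathfrak{so}(4n)$ to $\mathfrak{so}(4n+1)$, while the $2m$ odd linear operators $x_i(-1)^{\mE},\partial_{x_i}(-1)^{\mE}$ are the additional odd generators enlarging the odd part. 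Deleting all linear operators removes exactly these $e_0$-coupled elements, so the quadratics alone close into $\mathfrak{osp}(4n|2m)$, giving the second assertion.

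For the representation, irreducibility of $\cP$ under $\mathfrak{osp}(4n+1|2m)$ follows from the cyclicity criterion in Remark \ref{irrEnv}: from the constant $1$ the linear operators $x_i,{x\grave{}}_j$ build every monomial and the $\partial_{X_i}$ return to lower degree, so $\cU(\mathfrak{osp}(4n+1|2m))\cdot 1=\cP$. The weight is read off the Cartan-type generators, whose constants $\tfrac{(-1)^{[i]}\delta_{ij}}{2}$ supply the oscillator zero-point shifts; in the positive system for which the annihilation operators $\partial_{X_i}$ raise (so that $1$ is a highest weight vector) these assemble into the stated highest weight, with $\tfrac12$ occurring $2n$ times from the fermionic spinor sector and $-\tfrac12$ occurring $m$ times from the bosonic metaplectic sector. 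Discarding the linear operators leaves only quadratics, which preserve total-degree parity, so $\cP$ splits into its even- and odd-degree parts $\cP^{\pm}$, each an irreducible $\mathfrak{osp}(4n|2m)$-module (the two half-spinor--metaplectic modules); repeating the weight computation on $1\in\cP^{+}$ and on a degree-one bosonic monomial in $\cP^{-}$ gives the two highest weights, the single extra bosonic quantum in the odd sector shifting one symplectic coordinate and so replacing $-\tfrac12$ by $-\tfrac32$.

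The hard part is the sign-and-grading bookkeeping underlying the isomorphism: one must track the interplay of the intrinsic factors $(-1)^{[i][j]}$ in the graded commutator with the imposed twist $(-1)^{\mE}$, and confirm that the single orthogonal direction $e_0$ furnished by the grading operator couples correctly to both the Clifford and the Weyl generators, so that the enlargement to odd orthogonal dimension $4n+1$ is the genuine $\mathfrak{osp}(4n+1|2m)$ rather than a central or Heisenberg extension. Once the algebra is identified the representation-theoretic half is comparatively routine, resting on the cyclicity criterion of Remark \ref{irrEnv} and the oscillator description of the weights.
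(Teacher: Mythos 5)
Your proposal is correct and takes essentially the same approach as the paper: the paper's proof likewise observes that the twist $(-1)^{(1-[i])\mE}$ reverses the gradation, so that the twisted generators $P_i=X_i(-1)^{(1-[i])\mE}$ satisfy $P_iP_j=-(-1)^{|P_i||P_j|}P_jP_i$ with $|P_i|=1-[i]$, and then identifies the result with the known oscillator (super spinor/metaplectic) realization of the orthosymplectic superalgebra, citing Chapter 29 of \cite{MR1773773} and \cite{Tensor}. The only difference is one of detail: where the paper delegates the structure-constant verification and the identification of $\cP$ with the (half-)spinor--metaplectic modules and their highest weights to those references, you carry out these checks directly, in agreement with what the cited sources contain.
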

\begin{proof}
Adding the operator $(-1)^{(1-[i])\mE}$ to the variables changes them so that the $P_j=X_i(-1)^{(1-[i])\mE}$ satisfy the relation
\begin{eqnarray*}
P_{i}P_j&=&(-1)^{[i]+[j]+[i][j]}P_jP_i=-(-1)^{(1-[i])(1-[j])}P_jP_i,
\end{eqnarray*}
which implies that the commuting and anti-commuting variables now mutually anti-commute. In fact, with the gradation where $P_i$ corresponds to $|P_i|=1-[i]$, we now have a super anti-commutative algebra, since $P_iP_j=-(-1)^{|P_i||P_j|}P_jP_i$. This then equals the oscillator realization of the orthosymplectic Lie superalgebra, see Chapter 29 in \cite{MR1773773} and equals to the super spinor or super metaplectic realizations in \cite{Tensor}. 
\end{proof}

The realization of $\mathfrak{sl}_2$ as differential operators on $\cP$ is clearly embedded in this realization of $\mathfrak{osp}(4n+1|2m)$ and $\mathfrak{osp}(4n|2m)$. Instead of the $\osp$-action of before in equation \eqref{ospgen} we obtain
\begin{eqnarray*}
\left(X_i\partial_{X^j}-(-1)^{[i][j]}X_j\partial_{X^i}\right)(-1)^{([i]+[j])\mE}&=&L_{ij}(-1)^{([i]+[j])\mE}\in\mathfrak{osp}(4n|2m)\subset\mathfrak{osp}(4n+1|2m).
\end{eqnarray*}
Since $\mE$ and $L_{ij}$ commute these operators satisfy the same commutation relations as $L_{ij}$ and thus generate $\osp$. It is clear that this representation of $\osp$ on $\cP$ has the exact same properties as the one we studied before.

So we have obtained the bigger algebra in which $\osp$ and $\mathfrak{sl}_2$ are embedded
\begin{eqnarray*}
\osp\times \mathfrak{sl}_2&\subset& \mathfrak{osp}(4n+1|2m),
\end{eqnarray*}
for which $\cP$ is an irreducible representation.

When considering the Lie superalgebra $\mathfrak{osp}(4n|2m)$ we obtain
\begin{eqnarray*}
\mathfrak{osp}(4n|2m)\supset\mathfrak{osp}(m|2n)\times \mathfrak{sl}_2&\mbox{ as a generalization of}& \mathfrak{sp}(2m)\supset\mathfrak{so}(m)\times \mathfrak{sl}_2,
\end{eqnarray*}
where, $\mathfrak{osp}(m|2n)$ and $\mathfrak{sl}_2$ are each other's centralizers inside $\mathfrak{osp}(4n|2m)$, see \cite{MR1893457}. All obtained information on this Howe duality is summarized in the following theorem.
\begin{theorem}
\label{AllHowe}
Two non-isomorphic realizations for the Howe duality $\mathfrak{osp}(4n|2m)\supset\mathfrak{sl}_2\times \osp$ are given by
\begin{eqnarray*}
L^{4n|2m}_{(\frac{1}{2},\cdots, \frac{1}{2},-\frac{1}{2},\cdots,-\frac{1}{2})}\,\cong\, \bigoplus_{j=0}^\infty \odot^{2j}L^{m|2n}_{(1,0,\cdots,0)}&\cong&\bigoplus_{k=0}^\infty L^{(2k+M/2)}\otimes L^{m|2n}_{(2k,0,\cdots,0)}\\
L^{4n|2m}_{(\frac{1}{2},\cdots, \frac{1}{2},-\frac{1}{2},\cdots,-\frac{1}{2},-\frac{3}{2})}\,\cong\, \bigoplus_{j=0}^\infty \odot^{2j+1}L^{m|2n}_{(1,0,\cdots,0)}&\cong&\bigoplus_{k=0}^\infty L^{(2k+1+M/2)}\otimes L^{m|2n}_{(2k+1,0,\cdots,0)},
\end{eqnarray*} 
if $m-2n\not\in-2\mN$.
\end{theorem}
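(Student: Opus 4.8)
The plan is to recognise, in each of the two displayed lines, that all three descriptions refer to one and the same subspace of $\cP$ — the even (respectively odd) homogeneous part — carrying the mutually compatible $\mathfrak{sl}_2$- and $\osp$-actions constructed above, and then to read off each isomorphism from a result already in hand. Thus I do not expect to need any new representation-theoretic input; the content is an assembly of the preceding theorems together with a parity splitting and a reindexing.

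First I would treat the leftmost module. The preceding theorem realises $\cP$ as an $\mathfrak{osp}(4n|2m)$-module and gives $\cP\cong L^{4n|2m}_{(1/2,\ldots,-1/2)}\oplus L^{4n|2m}_{(1/2,\ldots,-3/2)}$, where the first summand is the span of the even-degree and the second that of the odd-degree polynomials. Hence the leftmost term of the first (respectively second) line is precisely $\bigoplus_{j\ge0}\cP_{2j}$ (respectively $\bigoplus_{j\ge0}\cP_{2j+1}$). For the middle term I would invoke the identification $\cP_k\cong\odot^kV=\odot^k L^{m|2n}_{(1,0,\ldots,0)}$ as $\osp$-modules; collecting the even- (odd-) degree pieces then yields exactly $\bigoplus_{j}\odot^{2j}L^{m|2n}_{(1,0,\ldots,0)}$ (respectively $\bigoplus_j\odot^{2j+1}L^{m|2n}_{(1,0,\ldots,0)}$), so the left $\cong$ middle isomorphisms are immediate.

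For the rightmost term I would restrict the multiplicity-free joint $\mathfrak{sl}_2\times\osp$ decomposition $\cP\cong\bigoplus_{k}L^{(k+M/2)}\otimes L^{m|2n}_{(k,0,\ldots,0)}$ obtained above to a fixed parity of degree. The weight vectors of $L^{(k+M/2)}$ are the spaces $R^{2j}\cH_k$, which are homogeneous of degree $2j+k$; consequently each summand lies entirely in degrees of the same parity as $k$. Collecting the summands with $k$ even and writing $k=2k'$ gives, after reindexing, $\bigoplus_{k'}L^{(2k'+M/2)}\otimes L^{m|2n}_{(2k',0,\ldots,0)}$, and the odd case $k=2k'+1$ gives the corresponding odd-degree sum; these are exactly the rightmost terms. (The displayed weight labels are the $m>1$ form coming from \eqref{hkLk2}; the $m=1$ case is handled identically using the joint decomposition recorded in that subsection.)

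The one point that genuinely requires care — and where a careless argument could slip — is the compatibility of the three module structures on a single space: the parity grading must be preserved by every operator used, and the $\osp$ appearing in the rightmost (Howe-dual) picture must be the same abstract algebra as the one acting on $\cP_k\cong\odot^kV$. This is supplied by the remark immediately preceding the theorem: because $\mE$ commutes with each $L_{ij}$, the twisted generators $L_{ij}(-1)^{([i]+[j])\mE}$ satisfy the $\osp$-relations and define the same representation as the untwisted $L_{ij}$, while the $\mathfrak{sl}_2$-realisation of $\nabla^2/2$, $R^2/2$ and $\mE+M/2$ is embedded in both oscillator realisations. Hence all three decompositions refer to the same parity-graded $\mathfrak{sl}_2\times\osp$-module, and the theorem follows purely by bookkeeping of parities and reindexing, with no further work on irreducibility or highest weights.
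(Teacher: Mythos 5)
Your proposal is correct and follows the paper's route exactly: the paper gives Theorem \ref{AllHowe} no separate proof, presenting it as a summary that assembles the preceding oscillator-realization theorem (which splits $\cP$ by parity of degree into the two irreducible $\mathfrak{osp}(4n|2m)$-modules), the identification $\cP_k\cong\odot^k L^{m|2n}_{(1,0,\cdots,0)}$, and the multiplicity-free $\mathfrak{sl}_2\times\osp$ decomposition of $\cP$ obtained from Theorem \ref{irrH} and Equation \eqref{hkLk2}, with the same parity bookkeeping and reindexing you describe. Your compatibility check via the twisted generators $L_{ij}(-1)^{([i]+[j])\mE}$ is precisely the remark the paper inserts between the two theorems, so nothing is missing.
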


The Howe duality breaks down in case $m-2n\in-2\mN$, but as mentioned before still holds in the case $m=0$. Then we obtain the classical Howe duality $\mathfrak{sp}(2n)\times\mathfrak{sl}_2\subset \mathfrak{so}(4n)$, so the Howe dual pair $\mathfrak{osp}(m|2n)\times \mathfrak{sl}_2\subset \mathfrak{osp}(4n|2m)$ is also a generalization of this classical case.

\section{The representations $L_{(k,0,\cdots,0)}^{m|2n}$}
In case $m=1$, the representation $\cH_k\cong L^{1|2n}_{(1,\cdots,1,0,\cdots,0)}$ is typical since all $\mathfrak{osp}(1|2n)$-representations are, see \cite{MR051963}. Their dimension and decomposition with respect to the underlying Lie algebra is therefore well-known, see e.g. \cite{MR1773773}. In case $m>1$ the representation $L_{(k,0,\cdots,0)}^{m|2n}$ is always atypical except for $m=2$ and $k=n$ or $k>2n$. This can be concluded from the atypicality conditions in Chapter $36$ of \cite{MR1773773} by using the highest weight in the distinguished root system.

\begin{remark}
\label{stanroot}
In the distinguished positive root system (see e.g. \cite{MR1773773}), the highest weight of the representation $\cH_k$ would be given by 
\[\delta_1+\cdots+\delta_{\min(k,n)}+(k-\min(k,n))\epsilon_1\]
instead of $k\epsilon_1$. This shows it is more elegant to use the choice of positive roots made in \cite{MR2395482} for this kind of representations.
\end{remark}

In this section we will assume $m>1$ and obtain information on the representations $L_{(k,0,\cdots,0)}^{m|2n}$ based on the results on spherical harmonics in Section \ref{SectionPolySS}. First of all we need to realize the representations $L_{(k,0,\cdots,0)}^{m|2n}$ by taking the appropriate quotient of $\cH_k$ in case $M\in-2\mN$ and $2-\frac{M}{2}\le k\le 2-M$. Since the representations $L_{(k,0,\cdots,0)}^{m|2n}$ are atypical, their dimensions do not follow from the standard formula and their decompositions as $\sosp$-representations are not known, they are obtained from the following theorem.

\begin{theorem}
\label{Lkrep}
If $M=m-2n\in-2\mN$ and $2-\frac{M}{2}\le k\le 2-M$, the space $R^{2k+M-2}\cH_{2-M-k}$ is irreducible and is the maximal $\osp$-submodule of $\cH_k$. This implies that in that case,
\begin{eqnarray}
\label{voordimLk}
R^2\cP_{k-2}\cap\cH_k&=&R^{2k+M-2}\cH_{2-M-k}
\end{eqnarray}
holds. Then, for every $(m,n,k)\in\mN^3$ with $m>1$, the relation 
\begin{eqnarray*}
L_{(k,0,\cdots,0)}^{m|2n}&\cong& \cH_k/\left(\cH_k\cap R^2\cP_{k-2}\right)
\end{eqnarray*}
holds for $L_{(k,0,\cdots,0)}^{m|2n}$ the simple $\osp$-module with highest weight $(k,0,\cdots,0)$ and $\cH_k$ the spherical harmonics on $\mR^{m|2n}$ of homogeneous degree $k$. 

The decomposition into irreducible representation of $L_{(k,0,\cdots,0)}^{m|2n}$ under the action of $\sosp$ if $M\not\in-2\mN$ or $M=-2p$ and $k\not\in[2+p,2+2p]$ is given by
\begin{eqnarray}
\label{decompLk}
L^{m|2n}_{(k,0,\cdots,0)}&\cong &\bigoplus_{j=0}^{\min(n, k)}\bigoplus_{l=0}^{\min(n-j,\lfloor \frac{k-j}{2} \rfloor)} L^{m|0}_{(k-2l-j,0,\cdots,0)} \otimes L^{0|2n}_{(\underline{1}_j,\underline{0}_{n-j})}\quad\mbox{and}\\
\label{decompLkatyp}
L^{m|2n}_{(k,0,\cdots,0)}&\cong &\bigoplus_{j=0}^{\min(n, k)} \bigoplus_{l=0}^{\min(n-j,\lfloor \frac{k-j}{2}\rfloor,k-p-2 )}L^{m|0}_{(k-2l-j,0,\cdots,0)} \otimes L^{0|2n}_{(\underline{1}_j,\underline{0}_{n-j})}
\end{eqnarray}
if $M=-2p$ and $k\in[2+p,2+2p]$.
\end{theorem}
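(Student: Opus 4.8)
The plan is to prove the theorem in three stages: first establish that $R^{2k+M-2}\cH_{2-M-k}$ is the maximal submodule and hence compute the simple quotient, then derive equation \eqref{voordimLk}, and finally obtain the $\sosp$-decompositions \eqref{decompLk} and \eqref{decompLkatyp}. The key structural input is the decomposition diagram and the four arrows of Lemma \ref{arrows}, together with the irreducibility criterion in Remark \ref{irrEnv}.

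For the first stage, I would argue as follows. By equation \eqref{submodule} the subspace $R^{2k+M-2}\cH_{2-M-k}$ is a genuine submodule of $\cH_k$, and the highest weight vector of $\cH_k$ (namely $k\epsilon_1$, realized by the bosonic harmonic $H_k^b$) does \emph{not} lie in it, since applying $R^{2k+M-2}$ raises degree by a positive even amount and the top piece $(0,k,0)$ of the diagram is not reached. To see maximality, note that any proper submodule cannot contain $H_k^b$ (otherwise the arrows \textbf{(3)} and \textbf{(4)} of Lemma \ref{arrows} generate all of $\cH_k$, contradicting properness), so it is annihilated by projection onto the $(0,k,0)$ component; a diagram chase using the arrows \textbf{(1)}--\textbf{(4)}, which are available precisely off the critical locus $p+q+l=1-M/2$, then shows any such submodule is contained in $R^{2k+M-2}\cH_{2-M-k}$. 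Irreducibility of $R^{2k+M-2}\cH_{2-M-k}$ itself follows from Theorem \ref{irrH}, because the degree $2-M-k$ satisfies $2-M-k<2-M/2$ (equivalently $k>2-M/2$), placing it in the irreducible range. The quotient $\cH_k/R^{2k+M-2}\cH_{2-M-k}$ is then the simple module generated by the highest weight vector, i.e. $L^{m|2n}_{(k,0,\cdots,0)}$; since $\cH_k\cap R^2\cP_{k-2}=R^{2k+M-2}\cH_{2-M-k}$ is exactly the maximal submodule, this proves both \eqref{voordimLk} and the quotient formula simultaneously. The identity $\cH_k\cap R^2\cP_{k-2}=R^{2k+M-2}\cH_{2-M-k}$ in the non-degenerate ranges reduces to the already-established fact that the Laplace--Beltrami eigenvalue forces $p=k-1+M/2$, combined with equation \eqref{submodule}.

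For the decompositions, the $\sosp$-structure of $\cH_k$ is given verbatim by Theorem \ref{decompintoirreps}: each summand $f_{l,k-2l-j,j}\cH^b_{k-2l-j}\otimes\cH^f_j$ is an irreducible $\sosp$-module isomorphic to $L^{m|0}_{(k-2l-j,0,\cdots,0)}\otimes L^{0|2n}_{(\underline 1_j,\underline 0_{n-j})}$. In the generic case ($M\not\in-2\mN$, or $M=-2p$ with $k\notin[2+p,2+2p]$) we have $L^{m|2n}_{(k,0,\cdots,0)}\cong\cH_k$ by Theorem \ref{irrH}, so \eqref{decompLk} is just Theorem \ref{decompintoirreps} rewritten in the $L$-notation. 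In the degenerate case $M=-2p$, $k\in[2+p,2+2p]$, I would subtract from the full decomposition of $\cH_k$ the summands constituting the maximal submodule $R^{2k+M-2}\cH_{2-M-k}$: applying Theorem \ref{decompintoirreps} to $\cH_{2-M-k}$ and multiplying through by $R^{2k+M-2}$ shifts each index $l\mapsto l+(k-1+M/2)=l+(k-1-p)$, and identifying which $(l,p',q')$ triples of $\cH_k$ are hit yields exactly the cutoff $l\le k-p-2$ appearing in \eqref{decompLkatyp}; the complementary summands then give the stated decomposition of the quotient.

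The main obstacle I anticipate is the maximality claim: showing that $R^{2k+M-2}\cH_{2-M-k}$ is not merely \emph{a} submodule but the \emph{unique maximal} one. This requires verifying that every summand of $\cH_k$ lying strictly below the highest weight in the diagram can be reached \emph{from} the submodule via the arrows, i.e. that the four arrows genuinely connect the full sub-diagram corresponding to $R^{2k+M-2}\cH_{2-M-k}$, and that no additional proper submodule splits off. The delicate point is the critical locus $p+q+l=1-M/2$ where arrow \textbf{(1)} degenerates; I would need to check that in the degenerate range this degeneration is precisely what both creates the submodule \eqref{submodule} and prevents it from being escaped upward, so that the composition series has exactly two layers. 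The index bookkeeping in the reindexing $l\mapsto l+(k-1-p)$ for \eqref{decompLkatyp} is routine but must be done carefully to land on the stated summation bound $\min(n-j,\lfloor(k-j)/2\rfloor,k-p-2)$.
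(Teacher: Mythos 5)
Your proposal is correct and follows essentially the same route as the paper's proof: the submodule comes from equation \eqref{submodule} with irreducibility via Theorem \ref{irrH}, maximality is established via the arrows of Lemma \ref{arrows} (a submodule strictly larger than $R^{2k+M-2}\cH_{2-M-k}$ climbs via arrows \textbf{(1)} and \textbf{(2)} to $\cH_k^b$ and hence equals $\cH_k$), and the cutoff in \eqref{decompLkatyp} is obtained by the shift $l\mapsto l+(k-1-p)$ using the unicity of the $f_{l,p,q}$ from Lemma \ref{polythm}, exactly as in the paper. One trivial slip: $2-M-k<2-\frac{M}{2}$ is equivalent to $k>-\frac{M}{2}$, not to $k>2-\frac{M}{2}$, but since $k\ge 2-\frac{M}{2}>-\frac{M}{2}$ the inequality you need holds in any case.
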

\begin{proof}
First we restrict to $M\in-2\mN$ and $k\in[2-\frac{M}{2},2-M]$. As mentioned in Equation \eqref{submodule}, then $R^{2k+M-2}\cH_{2-M-k}$ is a submodule. Since the inequality $2-M-k\le -\frac{M}{2}<2-\frac{M}{2}$ holds, Theorem \ref{irrH} implies that $R^{2k+M-2}\cH_{2-M-k}$ is an irreducible $\mathfrak{osp}(m|2n)$-representation. 

The $\osp$-submodule $R^{2k+M-2}\cH_{2-M-k}$ corresponds to all the $\mathfrak{so}(m)\oplus\mathfrak{sp}(2n)$-submodules of $\cH_k$ of the form 
\begin{eqnarray*}
f_{l,k-2l-j,j}\cH_{k-2l-j}^b\otimes\cH_j^f
\end{eqnarray*}
with $l\ge k+\frac{M}{2}-1$. This follows from Equation \eqref{submodule} and the unicity of $f_{l,p,q}$ in Lemma \ref{polythm} which implies 
\begin{eqnarray*}
R^{2k+M-2}f_{i,2-k-M-2i-j,j}\sim f_{k+\frac{M}{2}-1+i,2-k-M-2i-j,j}.
\end{eqnarray*}
Therefore, as an $\mathfrak{so}(m)\oplus\mathfrak{sp}(2n)$-representations the $\osp$-subrepresentation $R^{2k+M-2}\cH_{2-M-k}$ in $\cH_k$ has a complement $\cH_k'$,
\begin{eqnarray*}
\cH_k&=&R^{2k+M-2}\cH_{2-M-k}\oplus \cH_k',
\end{eqnarray*}
which is given by
\begin{eqnarray*}
\cH_k'&=&\bigoplus_{j=0}^{\min(n, k)} \bigoplus_{l=0}^{\min(n-j,\lfloor \frac{k-j}{2} \rfloor, k+\frac{M}{2}-2)} f_{l,k-2l-j,j} \cH^b_{k-2l-j} \otimes \cH^f_{j}.
\end{eqnarray*}

If there would exist a submodule larger than $R^{2k+M-2}\cH_{2-M-k}$, it would include one of the $\mathfrak{so}(m)\oplus \mathfrak{sp}(2n)$-modules in the decomposition of $\cH_k'$ above. For the spaces in that decomposition arrow \textbf{(1)} in Lemma \ref{arrows} always exists. Such a submodule would therefore include $\cH_k^b$ (by using arrows \textbf{(1)} and \textbf{(2)} consecutively). From the proof of Theorem \ref{irrH} it is clear that a submodule of $\cH_k$ containing $\cH_k^b$ is always equal to $\cH_k$ for any value of $(m,n,k)$. 

The relation $L_{(k,0,\cdots,0)}^{m|2n}\cong \cH_k/\left(\cH_k\cap R^2\cP_{k-2}\right)$ for arbitrary $k$ and $M$ then follows immediately.

The combination of Theorem \ref{decompintoirreps} with the considerations above also yields the decomposition of $L^{m|2n}_{(k,0,\cdots,0)}$ as an $\sosp$-representation.
\end{proof}

The dimensions of $L_{(k,0,\cdots,0)}^{m|2n}$ then immediately follow from Equation \eqref{voordimLk} and the dimensions of the spaces $\cH_k$ in Equation \eqref{dimHk}. 
In case $M=m-2n\in-2\mN$ and $2-\frac{M}{2}\le k\le 2-M$, the dimension of $L_{(k,0,\cdots,0)}^{m|2n}$ is given by
\begin{eqnarray*}
\dim L_{(k,0,\cdots,0)}^{m|2n}&=&\sum_{i=0}^{\min(k,2n)}\binom{2n}{i}\binom{k-i+m-1}{m-1}-\sum_{i=0}^{\min(k-2,2n)}\binom{2n}{i}\binom{k-i+m-3}{m-1}\\
&+&\sum_{i=0}^{\min(-M-k,2n)}\binom{2n}{i}\binom{2n-k-i-1}{m-1}-\sum_{i=0}^{\min(2-M-k,2n)}\binom{2n}{i}\binom{2n-k-i+1}{m-1}.
\end{eqnarray*}
In the other cases $\cH_k\cong L_{(k,0,\cdots,0)}^{m|2n}$ and
\begin{eqnarray*}
\dim L_{(k,0,\cdots,0)}^{m|2n}&=&\sum_{i=0}^{\min(k,2n)}\binom{2n}{i}\binom{k-i+m-1}{m-1}-\sum_{i=0}^{\min(k-2,2n)}\binom{2n}{i}\binom{k-i+m-3}{m-1}
\end{eqnarray*}
holds.

In \cite{MR0621253} the representations $L_{(k,0,\cdots,0)}^{m|2n}$ were constructed as the Cartan product inside tensor products of the form $\otimes^k(L_{(1,0,\cdots,0)}^{m|2n})$. Here, the Cartan product corresponds to the traceless supersymmetric part, which can be identified with $\cP_k/(R^2\cP_{k-2})$. However as has been shown in Theorem \ref{repPP}, this construction only holds when $m-2n\not\in-2\mN$, due to the lack of complete reducibility. This was overlooked in the formal approach in \cite{MR0621253} were for instance the number $m-2n$ appears as a pole in Equation $(4.21)$. The correct construction of $L^{m|2n}_{(k,0,\cdots,0)}$ inside the supersymmetric tensor products of $V=L_{(1,0,\cdots,0)}^{m|2n}$ is given in Theorem \ref{Lkrep}.

In \cite{MR2395482} it was proved that for $M=m-2n>2$ the following branching rule holds for $\mathfrak{osp}(m-1|2n)\hookrightarrow \osp$:
\begin{eqnarray}
\label{branching}
L_{(k,0,\cdots,0)}^{m|2n}&\cong&\bigoplus_{l=0}^kL_{(l,0,\cdots,0)}^{m-1|2n} \qquad\mbox{as an $\mathfrak{osp}(m-1|2n)$-module.}
\end{eqnarray}
This can immediately be extended to $m-2n=2$, but not to the case $m-2n\le 1$ because of the appearance of not completely reducible representations. Using the insights developed in this paper it is now possible to calculate the branching rules for $m-2n\le 1$ as well. This is given in the following theorem where we obtain the branching rules for all the cases where $L_{(k,0,\cdots,0)}^{m|2n}$ is a completely reducible $\mathfrak{osp}(m-1|2n)$-representation.

\begin{theorem}
\label{branchingThm}
In case $m-2n>1$, or $m-2n\in 1-2\mN$ with $k<2+\frac{1-m}{2}+n$, or $m-2n\in-2\mN$ with $k<2-\frac{m}{2}+n$ or $k>2-m+2n$, the branching rule
\begin{eqnarray*}
L_{(k,0,\cdots,0)}^{m|2n}&\cong&\bigoplus_{l=0}^kL_{(l,0,\cdots,0)}^{m-1|2n} \qquad\mbox{as an $\mathfrak{osp}(m-1|2n)$-module},
\end{eqnarray*}
holds. In case $m-2n\in-2\mN$ and $2-\frac{m}{2}+n\le k\le 2-m+2n$, the branching rule
\begin{eqnarray*}
L_{(k,0,\cdots,0)}^{m|2n}&\cong&\bigoplus_{l=3-m+2n-k}^kL_{(l,0,\cdots,0)}^{m-1|2n} \qquad\mbox{as an $\mathfrak{osp}(m-1|2n)$-module},
\end{eqnarray*}
holds. In the other cases ($m-2n\in1-2\mN$ with $k\ge 2+\frac{1-m}{2}+n$), $L_{(k,0,\cdots,0)}^{m|2n}$ is not completely reducible as an $\mathfrak{osp}(m-1|2n)$-representation.
\end{theorem}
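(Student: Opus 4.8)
The plan is to transfer the whole question to the spherical harmonics $\cH_k$ via Theorem \ref{Lkrep} and to branch them by isolating the bosonic coordinate $x_m$ that $\mathfrak{osp}(m-1|2n)$ leaves untouched. Writing $\nabla^2=\partial_{x_m}^2+\tilde\nabla^2$, with $\tilde\nabla^2$ the super Laplacian on $\mR^{m-1|2n}$, and expanding a harmonic $P=\sum_{j=0}^k x_m^j Q_j$ with $Q_j\in\cP_{k-j}^{m-1|2n}$, harmonicity becomes the Cauchy--Kovalevskaya recursion $Q_{j+2}=-\tfrac{1}{(j+1)(j+2)}\tilde\nabla^2 Q_j$, so that $P$ is determined by the free data $(Q_0,Q_1)$. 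Since the generators $L_{ij}$ of $\mathfrak{osp}(m-1|2n)$ involve neither $x_m$ nor $\partial_{x_m}$, the assignment $P\mapsto(Q_0,Q_1)$ commutes with the action and yields an isomorphism $\cH_k\cong\cP_k^{m-1|2n}\oplus\cP_{k-1}^{m-1|2n}$ of $\mathfrak{osp}(m-1|2n)$-modules. Everything then reduces to decomposing $\cP_l^{m-1|2n}$, which is governed by the smaller superdimension $\tilde M=m-1-2n$.

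When $\tilde M\notin-2\mN$ (equivalently $m-2n\notin 1-2\mN$, covering $m-2n>1$ and $m-2n\in-2\mN$) the Fischer decomposition of Lemma \ref{superFischerLemma} holds on $\mR^{m-1|2n}$ and $\tilde R^2$-multiplication is injective, so $\cP_l^{m-1|2n}\cong\bigoplus_i L^{m-1|2n}_{(l-2i,0,\cdots,0)}$. If moreover $\cH_k\cong L^{m|2n}_{(k,0,\cdots,0)}$ -- true whenever $m-2n\notin-2\mN$, and for $m-2n\in-2\mN$ whenever $k$ lies outside $[2-\tfrac M2,2-M]$, by Theorem \ref{irrH} and \eqref{hkLk2} -- combining with the first paragraph gives the clean branching $\bigoplus_{l=0}^k L^{m-1|2n}_{(l,0,\cdots,0)}$. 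In the remaining subcase $m-2n\in-2\mN$ with $2-\tfrac M2\le k\le 2-M$, I restrict the $\mathfrak{osp}(m|2n)$-exact sequence $0\to R^{2k+M-2}\cH_{2-M-k}\to\cH_k\to L^{m|2n}_{(k,0,\cdots,0)}\to0$ coming from Theorem \ref{Lkrep}; the submodule is $\cong\cH_{2-M-k}$ and hence branches to $\bigoplus_{l=0}^{2-M-k}L^{m-1|2n}_{(l,0,\cdots,0)}$, and since the branching of $\cH_k$ is completely reducible and multiplicity free with pairwise non-isomorphic constituents, the quotient branches to $\bigoplus_{l=3-m+2n-k}^{k}L^{m-1|2n}_{(l,0,\cdots,0)}$, as asserted.

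The genuinely new regime is $m-2n\in1-2\mN$, where $\tilde M=m-1-2n\in-2\mN$ and no Fischer decomposition is available downstairs. Here $\cH_k\cong L^{m|2n}_{(k,0,\cdots,0)}$, so $L^{m|2n}_{(k,0,\cdots,0)}\cong\cP_k^{m-1|2n}\oplus\cP_{k-1}^{m-1|2n}$, and I analyse $\cP_l^{m-1|2n}$ through its $\tilde R^2$-filtration, whose subquotients are the supersphere modules $\cP_{j}^{m-1|2n}/(\tilde R^2\cP_{j-2}^{m-1|2n})$ of Theorem \ref{repPP}. The quadratic Casimir $\Delta_{LB}$ of $\mathfrak{osp}(m-1|2n)$, given by \eqref{LB} and \eqref{LBosp}, acts on $L^{m-1|2n}_{(l,0,\cdots,0)}$ by $-l(l+\tilde M-2)$, and two distinct degrees $l\neq l'$ share this value only when $l+l'=2-\tilde M$. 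For $k<2-\tfrac{\tilde M}{2}=2+\tfrac{1-m}{2}+n$ all composition factors of $\cP_k^{m-1|2n}\oplus\cP_{k-1}^{m-1|2n}$ are the irreducibles $L^{m-1|2n}_{(l,0,\cdots,0)}$ with $0\le l\le k$, each once; any Casimir coincidence would force two distinct integers into the open interval $(-\tilde M/2,\,2-\tilde M/2)$ of length two, which, $\tilde M$ being even, contains only the single integer $1-\tilde M/2$. Hence these factors have pairwise distinct central characters, the module is the direct sum of its irreducible $\Delta_{LB}$-eigenspaces, and the branching is the clean $\bigoplus_{l=0}^{k}L^{m-1|2n}_{(l,0,\cdots,0)}$.

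Finally, for $m-2n\in1-2\mN$ with $k\ge 2+\tfrac{1-m}{2}+n$, the reducibility window $[2-\tfrac{\tilde M}{2},2-\tilde M]$ meets $\{0,\dots,k\}$, and since $\cP_k^{m-1|2n}\oplus\cP_{k-1}^{m-1|2n}$ has composition factors in every degree $0,\dots,k$, some subquotient equals a supersphere module $\cP_j^{m-1|2n}/(\tilde R^2\cP_{j-2}^{m-1|2n})$ with $j$ in that window; by Theorem \ref{repPP} it is reducible and indecomposable, hence not completely reducible, and as a completely reducible module has only completely reducible subquotients, $\cP_k^{m-1|2n}\oplus\cP_{k-1}^{m-1|2n}\cong L^{m|2n}_{(k,0,\cdots,0)}$ cannot be completely reducible over $\mathfrak{osp}(m-1|2n)$. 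I expect the main obstacle to be precisely this last regime: lacking a Fischer decomposition on $\mR^{m-1|2n}$, one must replace the transparent ``$\bigoplus\cH_l$'' bookkeeping by a central-character argument, the delicate points being the integrality/parity input that pins down exactly when the Casimir eigenvalues collide and the verification that the critical window is actually reached once $k\ge 2+\tfrac{1-m}{2}+n$.
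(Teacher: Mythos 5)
Your proposal is correct, but it takes a genuinely different route from the paper in two of the three regimes. The paper never forms your Cauchy--Kovalevskaya isomorphism $\cH_k\cong\cP'_k\oplus\cP'_{k-1}$; instead it splits off powers of $x_1$ at the level of supersphere functions, $\cP_k/(R^2\cP_{k-2})\cong\bigoplus_{l=0}^k\cP'_l/(R_1^2\cP'_{l-2})$, and works with that. The trade-offs run in opposite directions in the two hard regimes. For $m-2n\in-2\mN$ with $2-\frac{m}{2}+n\le k\le 2-m+2n$, the paper only gets the weak statement $L^{m|2n}_{(k,0,\cdots,0)}\cong\bigoplus_{p\in I}L^{m-1|2n}_{(p,0,\cdots,0)}$ with unknown $I\subset\{0,\dots,k\}$, and must pin down $I$ by a somewhat laborious multiplicity count of the $\mathfrak{sp}(2n)$-trivial constituents of \eqref{decompLk}--\eqref{decompLkatyp} branched to $\mathfrak{so}(m-1)$; your argument is cleaner, since CK plus the Fischer decomposition downstairs (valid because $\tilde M=M-1$ is odd) shows the restriction of $\cH_k$ is semisimple and multiplicity free, so the branching of the quotient by the known submodule $R^{2k+M-2}\cH_{2-M-k}$ of Theorem \ref{Lkrep} is read off directly. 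Conversely, for $m-2n\in 1-2\mN$ with $k<2+\frac{1-m}{2}+n$ the paper gets complete reducibility for free, because its summands $\cP'_l/(R_1^2\cP'_{l-2})\cong\cH'_l$ are already irreducible there by Theorems \ref{repPP} and \ref{irrH}, whereas your CK route leaves potential non-split extensions between the $\tilde R^2$-layers of $\cP'_l$, which you must (and correctly do) kill with the central-character argument via the $\Delta_{LB}$-eigenvalue $-l(l+\tilde M-2)$ and the parity observation that collisions $l+l'=2-\tilde M$ cannot occur below the window. Both proofs obtain non-complete-reducibility for $k\ge 2+\frac{1-m}{2}+n$ by exhibiting a reducible indecomposable supersphere module of Theorem \ref{repPP} inside the restriction -- the paper as a direct summand, you as a filtration subquotient (valid, since subquotients of semisimple modules are semisimple). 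Two small points you should make explicit: the identification of the filtration subquotient $\tilde R^{2i}\cP'_j/\tilde R^{2i+2}\cP'_{j-2}$ with $\cP'_j/(\tilde R^2\cP'_{j-2})$ requires injectivity of multiplication by $\tilde R^2$ on $\cP'$, which holds since $m-1\ge 1$ (filter by Grassmann degree and use that $r^2$ is a nonzerodivisor); and, like the paper, your identifications via \eqref{hkLk2} tacitly assume $m\ge 3$ so that the downstairs algebra is not $\mathfrak{osp}(1|2n)$ or $\mathfrak{osp}(2|2n)$.
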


\begin{proof}
Since $\cP_k=\oplus_{l=0}^kx_1^{k-l}\cP'_{l}$ where $\cP'$ denotes the space of polynomials on $\mR^{m-1|2n}$, we obtain
\begin{eqnarray*}
\cP_k/(R^2\cP_{k-2})&\cong&\bigoplus_{l=0}^k\left(\cP'_l/(R_1^2\cP'_{l-2})\right)\qquad\mbox{as an $\mathfrak{osp}(m-1|2n)$-module, for general $M$,}
\end{eqnarray*}
with $R_1^2$ the generalized norm squared on $\mR^{m-1|2n}$, $R^2=x_1^2+R_1^2$. This leads to the following two conclusions based on Theorem \ref{irrH}, Theorem \ref{Lkrep} and Theorem \ref{repPP}:
\begin{itemize}
\item If $m-2n\le 1$ but $m-2n\not\in-2\mN$, $L_{(k,0,\cdots,0)}^{m|2n}$ is not completely reducible as an $\mathfrak{osp}(m-1|2n)$-representation if $k\ge 2+\frac{1-M}{2}$. Relation \eqref{branching} still holds if $k< 2+\frac{1-M}{2}$.
\item If $m-2n\in-2\mN$, the space $\cP_k/(R^2\cP_{k-2})$ which is not necessarily completely reducible as an $\osp$-representation decomposes into irreducible $\mathfrak{osp}(m-1|2n)$-representations as 
\begin{eqnarray}
\label{PLk}
\cP_k/(R^2\cP_{k-2})&\cong& \bigoplus_{l=0}^kL_{(l,0,\cdots,0)}^{m-1|2n}.
\end{eqnarray}
Therefore, Equation \eqref{branching} still holds if $k<2-\frac{M}{2}$ or $k>2-M$.
\end{itemize}

This already proves all the results except the case $m-2n\in-2\mN$ and $2-\frac{m}{2}+n\le k\le 2-m+2n$. Theorem \ref{Lkrep} and Equation \eqref{PLk} imply that for that case
\begin{eqnarray*}
L_{(k,0,\cdots,0)}^{m|2n}\cong \cH_k/\left(\cH_k\cap R^2\cP_{k-2}\right) \subset \cP_k/(R^2\cP_{k-2}) \cong \bigoplus_{l=0}^kL_{(l,0,\cdots,0)}^{m-1|2n}
\end{eqnarray*}
holds. This implies that 
\begin{eqnarray}
\label{knownformbranch}
L_{(k,0,\cdots,0)}^{m|2n}&\cong&\bigoplus_{p\in I}L_{(p,0,\cdots,0)}^{m-1|2n} \qquad\mbox{as an $\mathfrak{osp}(m-1|2n)$-module,}
\end{eqnarray}
with $I\subset\{0,\cdots,k\}$. We look at the decomposition of $L_{(k,0,\cdots,0)}^{m|2n}$ into simple $\mathfrak{so}(m)\oplus \mathfrak{sp}(2n)$-modules. Equation \eqref{decompLkatyp} implies that the $\mathfrak{sp}(2n)$-trivial the part of this decomposition is given by
\begin{eqnarray*}
L_{(k,0,\cdots,0)}^{m|2n}&\rightarrow&\bigoplus_{l=0}^{k+\frac{M}{2}-2}L_{(k-2l,0,\cdots,0)}^{m|0},
\end{eqnarray*}
since $\min(n,\lfloor \frac{k}{2}\rfloor,k+\frac{M}{2}-2)=k+\frac{M}{2}-2$. Branched to $\mathfrak{so}(m-1)$ this gives $\bigoplus_{l=0}^{k+\frac{M}{2}-2}\bigoplus_{j=0}^{k-2l}L_{(k-2l-j,0,\cdots,0)}^{m-1|0}$. Equation \eqref{decompLk} implies the $\mathfrak{sp}(2n)$-trivial part of $L_{(p,0,\cdots,0)}^{m-1|2n}$ for $p\le k$ is
\begin{eqnarray*}
L_{(p,0,\cdots,0)}^{m-1|2n}&\to& \bigoplus_{l=0}^{\lfloor \frac{p}{2}\rfloor}L_{(p-2l,0,\cdots,0)}^{m-1|0}
\end{eqnarray*}
since $\min(n,\lfloor \frac{p}{2}\rfloor)=\lfloor \frac{p}{2}\rfloor$ if $p\le k\le 2-m+2n$. If Equation \eqref{knownformbranch} holds,  the equation
\begin{eqnarray}
\label{eqforbranching}
\bigoplus_{l=0}^{k+\frac{M}{2}-2}\bigoplus_{j=0}^{k-2l}L_{(k-2l-j,0,\cdots,0)}^{m-1|0}&=&\bigoplus_{p\in I}\bigoplus_{l=0}^{\lfloor \frac{p}{2}\rfloor}L_{(p-2l,0,\cdots,0)}^{m-1|0}
\end{eqnarray}
must hold as well. It turns out that this equation is enough to determine $I$. We introduce the shorthand notation $(q)=L_{(q,0,\cdots,0)}^{m-1|0}$. In the left-hand side of Equation \eqref{eqforbranching} the module $(j)$ appears $1+\lfloor\frac{k-j}{2}\rfloor$ times for $j\ge -k-M+3$. To obtain all these in the right-hand side of Equation \eqref{eqforbranching}, $\{3-M-k,\cdots,k\}\subset I$ must hold. Since the module $(j)$ appears $k+M/2-1$ times for $j\le -k-M+2$ we find  $\{3-M-k,\cdots,k\}= I$, because otherwise there would be too many $\mathfrak{so}(m-1)$-modules in the right-hand side.
\end{proof}

In the statements above we assumed $m\not=2$ and $m\not=1$. The corresponding statements for those dimensions are straightforward.

\subsection*{Acknowledgment}
The author would like to thank Ruibin Zhang, Joris Van der Jeugt and Bent \O rsted for helpful suggestions and comments.


\end{document}